\newcommand{\reff}[1]{(\ref{#1})}
\theoremstyle{plain}
\newtheorem{theo}{Theorem}[section]
\newtheorem{cor}[theo]{Corollary}
\newtheorem{prop}[theo]{Proposition}
\newtheorem{lem}[theo]{Lemma}
\newtheorem{defi}[theo]{Definition}
\theoremstyle{remark}
\newtheorem{rem}[theo]{Remark}
\newcommand{\cc}{{\mathcal C}}
\newcommand{\cf}{{\mathcal F}}
\newcommand{\cg}{{\mathcal G}}
\newcommand{\ci}{{\mathcal I}}
\newcommand{\cl}{{\mathcal L}}
\newcommand{\cn}{{\mathcal N}}
\newcommand{\ct}{{\mathcal T}}
\newcommand{\cu}{{\mathcal U}}
\newcommand{\cy}{{\mathcal Y}}
\newcommand{\E}{{\mathbb E}}
\newcommand{\M}{{\mathbb M}}
\newcommand{\N}{{\mathbb N}}
\newcommand{\R}{{\mathbb R}}
\newcommand{\T}{{\mathbb T}}
\newcommand{\ind}{{\bf 1}}
\newcommand{\inv}[1]{\mathop{\frac{1}{ #1}}\nolimits}
\newcommand{\expp}[1]{\mathop {\mathrm{e}^{ #1}}}
\newcommand{\lb}{[\![}
\newcommand{\rb}{]\!]}
\begin{document}

\title{Record process on the Continuum Random Tree}

\date{\today}
\author{Romain Abraham} 

\address{
Romain Abraham,
Laboratoire MAPMO, CNRS, UMR 7349,
F\'ed\'eration Denis Poisson, FR 2964,
 Université d'Orléans,
B.P. 6759,
45067 Orléans cedex 2,
France.
}
  
\email{romain.abraham@univ-orleans.fr}

\author{Jean-François Delmas}

\address{
Jean-Fran\c cois Delmas,
Université Paris-Est, \'Ecole des Ponts, CERMICS, 6-8
av. Blaise Pascal, 
  Champs-sur-Marne, 77455 Marne La Vallée, France.}

\email{delmas@cermics.enpc.fr}

\thanks{This work is partially supported by the ``Agence Nationale de
  la Recherche'', ANR-08-BLAN-0190.}

\begin{abstract}
By considering a continuous pruning procedure on Aldous's Brownian
tree, we construct a random variable $\Theta$ which is distributed,
conditionally given the tree, according to the probability law
introduced by Janson as the limit distribution of the number of cuts
needed to isolate the root in a critical Galton-Watson tree. We also
prove that this random variable can be obtained as the a.s. limit of
the number of cuts needed to cut down the subtree of the continuum
tree spanned by $n$ leaves. 
\end{abstract}

\keywords{continuum random tree, records, cutting down a tree}

\subjclass[2010]{60J80,60C05}

\maketitle

\section{Introduction}
\label{sec:intro}

The problem of randomly cutting a rooted tree arises first in Meir and
Moon \cite{mm:cdrt}. Given a rooted tree $T_n$ with $n$ edges, select
an edge uniformly at random and delete the subtree not containing the
root attached to this edge. On the remaining tree, iterate this
procedure until only the edge attached to the root is left. We denote
by $X_n$ the number of edge-removals needed to isolate the root. The
problem is then to study asymptotics of this random number $X_n$,
depending on the law of the initial tree $T_n$.

In the original paper \cite{mm:cdrt}, Meir and Moon considered Cayley
trees and obtained asymptotics for the first two moments of
$X_n$. Limits in distribution were then obtained by Panholzer
\cite{p:cdvst} for some simply generated trees, by Drmota,
Iksanov, Möhle and Roesler \cite{dimr:ldncnirrrt} for random
recursive trees, by Holmgren \cite{h:rrcbsc} for binary search trees,
by Bertoin \cite{b:ft} for Cayley trees and by Janson \cite{j:rcrdrt}
for conditioned Galton-Watson trees. The main result of
\cite{j:rcrdrt} states  that,  if  the offspring  distribution  of  the
Galton-Watson process  is critical (that is  with mean equal  to 1) with
finite  variance, which  we take  equal to  1 for  simplicity,  then the
following convergence in distribution  of the conditional laws (specified
by their moments) holds:
\begin{equation}
   \label{eq:TzT}
\cl(
X_n/\sqrt{n}\,|\, T_n/\sqrt{n})
\; \xrightarrow[n\rightarrow +\infty ]{(d)} \;
\cl(Z_\ct\,|\,\ct)
\end{equation}
where  $\ct$  is the so-called continuum  random  tree  (CRT) introduced  by
Aldous  \cite{a:crt1,a:crt3} and can  be seen  as the  limit in
distribution of  $T_n/\sqrt{n}$ (see \cite{a:crt3}).  Furthermore, the
random  variable  $Z_\ct$  has (unconditional)   Rayleigh  distribution  with  density
$x\expp{-x^2/2}\ind_{\{x>0\}}$.   However,   there  is  no  constructive
description of $Z_\ct$ conditionally on $\ct$.

The first goal of the paper is to give a continuous pruning procedure
of the CRT
that leads to a random variable that is indeed distributed,
conditionally given the tree, as  $Z_\ct$. In order to better
understand the intuitive idea of the record process on the CRT, let us
first consider the pruning of the simple tree consisting in the segment
$[0,1]$ divided into $n$ segments of equal length, rooted at 0. Select
an edge at random and discard what is located on the right of this
edge. Then chose again an edge at random on the remaining segments and
iterate the procedure until the segment attached to 0 is chosen. It is
clear that the continuous analogue of this procedure (when the
number $n$ of segments tends to $+\infty$) is the so-called
stick-breaking scheme: consider a uniform random variable $U_1$ on
$[0,1]$, then conditionally given $U_1$, consider a uniform random
variable $U_2$ on $[0,U_1]$ and so on. The sequence $(U_n)_{n\ge 0}$
corresponds to the successive cuts of the interval $[0,1]$ in the
continuous pruning. Moreover, this sequence can be obtained as the
records of a Poisson point process. More precisely, if we consider a
Poisson point measure $\sum_{i\in I}\delta_{(x_i,t_i)}$ on
$[0,1]\times[0,+\infty)$ with intensity the Lebesgue measure, then the
  sequence $(U_n)$ is distributed as the sequence of jumps of the
  record process
\[
\theta(x)=\inf\{t_i,x_i\in[0,x]\}.
\]

In our case, the limiting object is Aldous's CRT (instead of the segment
$[0,1]$). More precisely, we consider  a real tree $\ct$ associated with
the branching mechanism $\psi(u)=\alpha u^2$ under the excursion measure
$\N$. This tree is coded  by the height process $\sqrt{2/\alpha} B_{ex}$
where $B_{ex}$  is a positive  Brownian excursion. This tree  is endowed
with two  measures: the length  measure $\ell(dx)$ which  corresponds to
the Lebesgue measure  on the skeleton of the tree,  and the mass measure
$m^\ct(dx)$  which   is  uniform  on   the  leaves  of  the   tree.  Let
$\sigma=m^\ct(\ct)$  be   the  total   mass  of  $\ct$.    Aldous's  CRT
corresponds to  the distribution  of the tree  $\ct$ conditioned  on the
total  mass $\sigma=1$,  with $\alpha=1/2$.  We then  add cut  points on
$\ct$  as  above  thanks  to  a  Poisson  point  measure  on  $\ct\times
[0,+\infty)$ with intensity
\[
\alpha \ell(dx)d\theta
\]
 in the same
spirit as in \cite{ap:sac} (see also \cite{as:psf} for a direct
construction, and \cite{adv:plcrt} for the pruning of a general L\'evy
tree). We denote by $(x_i,q_i)$ the atoms of this point measure, $x_i$
represents the location of the cut point and $q_i$ represents the time
at which it appears. For $x\in \ct$, we denote by
\[
\theta(x)=\inf\{q_i,\ x_i\in\lb \emptyset,x\rb\}
\]
where $\lb \emptyset,x\rb\subset \ct$ denotes the path between $x$ and the
root. When a mark appears, we cut the tree on this mark and discard
the subtree not containing the root. Then $\theta(x)$ represents the
time at which $x$ is separated from the root. Then we define
$$\Theta=\int_\ct\theta(x)m^\ct(dx)\qquad\mbox{and}\qquad
Z=\sqrt{\frac{2\alpha}{\sigma}}\Theta.$$
We prove (see Theorem \ref{thm:main}) that, conditionally on $\ct$,
$Z$ and $Z_\ct$ have indeed the same law. The proof of this result
relies on another representation of $\Theta$ in terms of the mass of
the pruned tree (a similar result also appears in Addario-Berry,
Broutin and Holmgren \cite{abbh:cdtmc}). More precisely, if we set
$$\sigma_q=\int_\ct\ind_{\{\theta(x)\ge q\}}m^\ct(dx)$$
the mass of the remaining tree at time $q$, then we have
$$\Theta=\int_0^{+\infty}\sigma_q\, dq.$$

Using this framework, we can extend in some sense Janson's result by
obtaining an a.s. convergence in a special case. We consider,
conditionally given $\ct$, $n$ leaves uniformly chosen (i.e. sampled
according to the mass measure $m^\ct$) and we denote by $T_n$ the
sub-tree of $\ct$ spanned by these $n$ leaves and the root. The tree
$T_n$ is distributed under $\N[\,\, \cdot\mid \sigma=1]$ as a uniform
ordered binary tree with $n$ leaves (and hence $2n-1$ edges) with
random edge lengths. We denote by $T_n^*$ the tree obtained by
removing from $T_n$ the edge attached to the root, and by $X_n^*$ the
number of discontinuities of the process $(\theta(x),x\in T_n^*)$. The
quantity $X_n^*+1$ represents the number of cuts
needed to reduce the binary tree $T_n$ to a single branch attached to
the root. Notice that in our framework, several cuts may appear on the
same branch, so $X_n^*$ looks like $X_{2n-1}$ for uniform ordered
binary trees but is not exactly the same. Then, we prove in Theorem
\ref{theo:CV-X_n} that $\N$-a.e. or $\N[\, \, \cdot\mid\sigma=1]$-a.s.:
\[
\lim_{n\to+\infty}\frac{X_n^*}{\sqrt{2n}}=Z.
\]
This result can be extended by studying the fluctuations of the
quantity $X_n^*/\sqrt{2n}$ around its limit, this is the purpose of
Hoscheit \cite{h:fnrrbt}. In this setting the fluctuations come from
the approximation of the record process by its intensity, whereas there
is no contribution from the approximation of $\ct$ by $T_n$. 

Using the second representation of $\Theta$ and results from Abraham,
Delmas and Hoscheit \cite{adh:etiltvp} on the pruning of general
L\'evy trees, we also derive a.s. asymptotics  on the sizes
$(\sigma_i,i\in\ci)$ of the removed sub-trees during the cutting
procedure. According to Propositions \ref{prop:cv-sqi} and
\ref{prop:cv-sqi2}, we have $\N$-a.e.
\[
\lim_{n\to+\infty}\frac{1}{\sqrt
  n}\sum_{i\in\ci}\ind_{\{\sigma^i\geq 1/n\}}
=\lim_{n\to+\infty}\sqrt{n}\sum_{i\in\ci}\sigma^i\ind_{\{\sigma^i\le
1/    n\}}=2\sqrt{\frac{\alpha}{\pi}}\Theta.
\]
This result is extended to general L\'evy trees in Abraham and Delmas
\cite{ad:farplt}.

The  paper  is  organized  as  follows.  In  Section  \ref{sec:crt},  we
introduce the  frameworks of  discrete trees and  real trees  and define
rigorously Aldous's CRT, the mark  process and the record process on the
tree. Section  \ref{sec:janson} is devoted to the  identification of the
law   of   $\Theta$   conditionally   given   the   tree.   In   Section
\ref{sec:convergence}, we prove the  a.s. convergence of $X_n^*$ as well
as   the   convergence   results   on   the  masses   of   the   removed
subtrees.  Finally, we  gathered in  Section  \ref{sec:appendix} several
technical lemmas that are needed in  the proofs but are not the heart of
the paper.

\section{The continuum random tree and the mark process}
\label{sec:crt}

\subsection{Real trees}

We recall here the definition and basic properties of real trees. We
refer to Evans's Saint Flour lectures \cite{e:prt} for more details on
the subject.

\begin{defi}
A real tree is a metric space $(\ct,d)$ satisfying the following two
properties for every $x,y\in\ct$:
\begin{itemize}
\item (Unique geodesic) There is a unique isometric map $f_{x,y}$ from
  $[0,d(x,y)]$ into $\ct$ such that $f_{x,y}(0)=x$ and
  $f_{x,y}(d(x,y))=y$.
\item (No loop) If $\varphi$ is a continuous injective map from
  $[0,1]$ into $\ct$ such that $\varphi(0)=x$ and $\varphi(1)=y$, then
$$\varphi([0,1])=f_{x,y}([0,d(x,y)]).$$
\end{itemize}
A rooted real tree is a real tree with a distinguished vertex denoted
$\emptyset$ and called the root.
\end{defi}

We denote by $\lb x,y\rb=f_{x,y}([0,1])$ the range of the mapping
$f_{x,y}$, which  is the unique continuous injective path between $x$ and $y$ in the
tree, and $\lb x,y\lb=\lb x,y\rb\backslash\{y\}$. A point $x\in\ct$ is said to be a leaf if the set
$\ct\setminus\{x\}$ remains connected. We denote by $\mathrm{Lf}(\ct)$ the set
of leaves of $\ct$. The skeleton of the tree is the
set of non-leaves points $\ct\setminus\mathrm{Lf}(\ct)$. As the trace of the Borel $\sigma$-field on
the skeleton of $\ct$ is generated by the intervals $\lb x,y\rb$, 
one can define a length measure denoted by $\ell(dx)$ on a real
tree by:
$$\ell(\lb x,y\rb)=d(x,y).$$

We will  consider here only  compact real trees  and these trees  can be
coded by some continuous function (see \cite{lg:rrt} or \cite{d:ccrtrvf}). We  consider  a  continuous
function  $\zeta\,:\,  [0,+\infty)\rightarrow[0,+\infty)$  with  compact
support  $[0,\sigma]$ and  such  that $\zeta(0)=\zeta(\sigma)=0$.   This
function   $\zeta$  will  be   called  in   the  following   the  height
function. For every $s,t\ge 0$, we set
$$m_{\zeta}(s,t)=\inf_{r\in[s\wedge t,s\vee t]}\zeta(r),$$
and
\[
d(s,t)=\zeta(s)+\zeta(t)-m_\zeta(s,t).
\]
We then define the equivalence relation $s\sim t$ iff $d(s,t)=0$. We
set $\ct$ the quotient space
$$\ct=[0,+\infty)/\sim.$$
The pseudo-distance $d$ induces a  distance on $\ct$ and we keep notation
$d$ for this distance. We denote by $p$ the canonical projection from $[0,+\infty)$ onto
  $\ct$.
The  metric  space
$(\ct,d)$  is a  compact real  tree which can be  viewed as  a rooted real  tree by  setting $\emptyset=
p(0)$.

On such a compact real tree, we define another measure : the mass
measure $m^\ct$ defined as the push-forward of the Lebesgue measure by
the projection $p$. It is a finite measure supported by the leaves of
$\ct$ and its total mass is
$$m^\ct(\ct)=\sigma.$$

This coding is very useful to define random real trees. For instance,
Aldous's CRT is the random real tree coded by $2B_{ex}$ where $B_{ex}$
denotes a normalized Brownian excursion (i.e. a positive Brownian
excursion with duration 1). Here, we will work
under the $\sigma$-finite measure $\N$ which denotes the law of a real
tree coded by an excursion away from 0  of
$\sqrt{\frac{2}{\alpha}}|B|$  where   $|B|$  is  a   standard  reflected
Brownian motion. The tree $\ct$ under $\N$ is then the genealogical tree of a
continuous   state   branching    process   with   branching   mechanism
$\psi(u)=\alpha u^2$ under its canonical measure. In particular, under
$\N$, $\sigma$ has density on $(0,+\infty )$:
\begin{equation}
   \label{eq:densite-s}
\frac{dr}{2\sqrt{\alpha \pi}\, r^{3/2}}\cdot
\end{equation}

We keep parameter
$\alpha$ in order to stay in the framework of \cite{ad:ctvmp}, and give
the result in the setting of Aldous's CRT ($\alpha=1/2$) or of Brownian
excursion ($\alpha=2$).

Using the scaling property of the Brownian motion, there exists a
regular version of the measure $\N$ conditioned on the length of the
height process $\zeta$. We write $\N^{(r)}$ for the probability
measure $\N[\,\cdot\, |\sigma=r]$. In particular, we handle Aldous's
CRT if we work under $\N^{(1)}$ with $\alpha=1/2$.

If  $x_1,\ldots,x_n\in\ct$,  we   denote  by  $\ct(x_1,\ldots,x_n)$  the
subtree  spanned   by  $\emptyset,x_1,\ldots,x_n$,  i.e.   the  smallest
connected  subset  of  $\ct$  that  contains  $x_1,\ldots,x_n$  and  the
root. In other words, we have
$$\ct(x_1,\ldots,x_n)=\bigcup_{i=1}^n\lb\emptyset,x_i\rb.$$
With an abuse of notation, we write for every $t_1,\ldots,t_n\ge 0$,
$\ct(t_1,\ldots,t_n)$ for the subtree  $\ct(p(t_1),\ldots,p(t_n))$.

\subsection{The mark process}

We define now a mark process $M$ on the tree $\ct$. Conditionally given $\ct$,
let $M(dx,dq)$ be a Poisson point measure on $\ct\times [0,+\infty)$
  with intensity $2\alpha\ell(dx)dq$. An atom $(x_i,q_i)$ of
  this random measure represents a mark on the tree $\ct$, $x_i$ is
  the location of this mark whereas $q_i$ denotes the time at which
  the mark appears. 

\begin{rem}
  The coefficient $2\alpha$  in the intensity is added  to have the same
  intensity      as      in      the     pruning      procedures      of
  \cite{as:psf,adv:plcrt,adh:etiltvp} but, as we  shall see, it does not
  appear in the law of the number of records.
\end{rem}

In  fact  we  will  sometimes  work  with  the  restriction  of  $M$  to
$\ct\times[0,a]$  for some  $a>0$. To  simplify the  notations,  we will
always denote by $M$ the mark process (even the restricted one) and will
write  $\M_a^\ct$ for  the law  of $M$  restricted  to $\ct\times[0,a]$,
conditionally    given    $\ct$.     We    also    write    $\N_a[d\ct\,
dM]=\N[d\ct]\M_a^\ct[dM]$,             and            $\N_a^{(r)}[d\ct\,
dM]=\N^{(r)}[d\ct]\M_a^\ct[dM]$.

We set for every $q\ge 0$ and $x\in \ct$:
\begin{equation}
\label{eq:def-pruned}
\theta(x)= \inf\{q>0,\ M(\lb
\emptyset, x\rb\times [0,q])>0\}
\quad\text{and}\quad
\ct_q=\{x\in \ct; \theta(x)\geq q\},
\end{equation}
respectively the first time a mark appears between the root and $x$, and
the tree obtained by pruning the original tree at the marks present at
time $q$. We also define the mass of  the tree $\ct_q$:
\[
\sigma_q=m^\ct(\ct_q).
\]
According to \cite{adv:plcrt}, $\ct_q$ is distributed under
$\N_\infty$ as a L\'evy tree with branching mechanism
$$\psi_q(u)=\psi(u+q)-\psi(q)=\alpha u^2+2\alpha qu.$$
We will denote by $\N^{\psi_q}$ the distribution
of $\ct_q$ under $\N$. Moreover, thanks to Girsanov formula (\cite{ad:ctvmp}, Lemma 6.2),
we have, for every nonnegative Borel function $F$
\begin{equation}
\label{eq:girsanov}
\N^{\psi_q}[F(\ct)]=\N[F(\ct_q)]
=\N\left[F(\ct)\expp{-\alpha q^2\sigma}\right].
\end{equation}

With the same abuse of notation as for the spanned subtree, we write for
every $t\in\R_+$, 
$\theta(t)$ instead of
$\theta(p(t))$. 

\subsection{Discrete trees}

We recall here the definition of a discrete ordered rooted tree
according to Neveu's formalism \cite{n:apgw}.

We consider $\cu=\displaystyle \bigcup_{n=0}^{+\infty}(\N^*)^n$ the set
of finite sequences of positive integers. The empty sequence
$\emptyset$ belongs to $\cu$. If $u,v\in \cu$, we denote by $uv$ the
sequence obtained by juxtaposing the sequences $u$ and $v$.

A discrete ordered rooted tree $T$ is a subset of $\cu$ satisfying the three following
properties
\begin{itemize}
\item $\emptyset\in T$. $\emptyset$ is called the root of $T$.
\item For every $u\in \cu$ and $i\in \N^*$, if $ui\in T$ then
  $u\in T$.
\item For every $u\in T$, there exists an integer $k_u(T)$ such
  that
$$ui\in T\iff 1\le i\le k_u(T).$$
\end{itemize}
The integer $k_u(T)$ is the number of offsprings of the vertex
$u$. The leaves of the tree are the $u\in T$ such that $k_u(T)=0$. We will consider here only binary trees i.e, discrete trees such
that $k_u(T)=0$ or $2$.

We can add edge lengths to a discrete tree by considering weighted
trees. A weighted tree is defined by a discrete ordered rooted tree
$T$ and a weight $h_u\in[0,+\infty)$ for every $u\in T$. The
  elements $u\in T$ must be viewed as the edges of the tree and $h_u$
  is the length of the edge $u$. Obviously, such a weighted tree can
  be viewed as a real tree and we will always make the confusion
  between a discrete weighted tree and the associated real tree.

\section{Janson's random variable}
\label{sec:janson}

Let $\ct$ be a compact real tree and let $M$ be a mark process on $\ct$.
We set
\[
\Theta=\int_\ct\theta(x)m^\ct(dx).
\]
Remark that this can be re-written using the coding by 
$\Theta=\int_0^\sigma \theta(s)ds$.

Using the tree-valued process $(\ct_q,q\ge 0)$, we can give another
expression for $\Theta$.  Let $(\theta_i, i\in \ci)$ be the set of jumping times of
$(\sigma_q, q\geq 0)$. We set:
\begin{equation}
   \label{eq:def-Ti}
\ct^i=\{x\in \ct; \theta(x)=\theta_i\}  \quad\text{and}\quad 
\sigma^i=m^\ct(\ct^i)=\sigma_{\theta_i-}-\sigma_{\theta_i}.
\end{equation}
According   to   \cite{ad:ctvmp},    we   have   that   $\M^{\ct}_\infty
$-a.s. $\ct^i$ is a real tree for all $i\in \ci$. 
Then the following result is straightforward as by definition
$\Theta=\sum_{i\in \ci} \theta_i \sigma^i$ and 
$\sigma_q=\sum_{\theta_i\geq q} \sigma^i$. 

\begin{prop}\label{prop:expression-theta}
We have  $\M^\ct_\infty$-a.s.:
\[
\Theta=\int_0^{+\infty}\sigma_q\, dq.
\]
\end{prop}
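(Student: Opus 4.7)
The proposition is essentially a Fubini/Tonelli computation, so I would only need to assemble the two identities that the statement already records. The plan is to start from the decomposition $\Theta=\sum_{i\in\ci}\theta_i\sigma^i$: since the map $\theta:\ct\to[0,+\infty)$ is $m^\ct$-a.e.\ constant equal to $\theta_i$ on each $\ct^i$, and since the family $(\ct^i)_{i\in\ci}$ forms (up to an $m^\ct$-negligible set) a countable partition of $\ct$, integrating $\theta(x)$ against $m^\ct$ reduces to that sum. The negligible leftover to dismiss is $\{x:\theta(x)\in\{0,+\infty\}\}$, which has zero $m^\ct$-measure: for $m^\ct$-a.e.\ $x$, the segment $\lb\emptyset,x\rb$ has positive length, so the Poisson point measure on $\lb\emptyset,x\rb\times[0,+\infty)$ (with $\sigma$-finite intensity $2\alpha\ell\otimes dq$) has its first mark-time almost surely in $(0,+\infty)$.

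Next I would rewrite each $\theta_i$ as the integral $\int_0^{+\infty}\ind_{\{q\le\theta_i\}}\,dq$ and swap sum and integral by Tonelli (everything is nonnegative):
\[
\Theta=\sum_{i\in\ci}\sigma^i\int_0^{+\infty}\ind_{\{q\le\theta_i\}}\,dq
=\int_0^{+\infty}\Big(\sum_{i\in\ci}\sigma^i\ind_{\{\theta_i\ge q\}}\Big)dq.
\]
The inner sum is exactly $\sigma_q=m^\ct(\ct_q)=\sum_{\theta_i\ge q}\sigma^i$, using the definition \reff{eq:def-pruned} of $\ct_q$ together with the partition of $\ct$ into the $\ct^i$'s (modifying the set $\{q=\theta_i\}$ amounts to changing $\sigma_q$ on a countable set of values of $q$, which does not affect the Lebesgue integral).

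There is no genuine obstacle here: the only point requiring a word of justification is that the $(\ct^i)_{i\in\ci}$ cover $\ct$ up to an $m^\ct$-null set, which follows from the fact that under $\M^\ct_\infty$ the Poisson mark measure almost surely places marks on every nontrivial ancestral segment, combined with the observation that the set of jump times $(\theta_i)_{i\in\ci}$ is countable. Everything else is Tonelli applied to nonnegative quantities.
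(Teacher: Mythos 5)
Your proof is correct and takes essentially the same route as the paper: the paper records the two identities $\Theta=\sum_{i\in\ci}\theta_i\sigma^i$ and $\sigma_q=\sum_{\theta_i\geq q}\sigma^i$ and declares the conclusion straightforward, which is exactly your Tonelli computation. You merely make explicit the null-set justifications (that the $\ct^i$ cover $\ct$ up to an $m^\ct$-negligible set and that $\theta$ is $m^\ct$-a.e.\ finite and positive) that the paper leaves implicit.
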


The main result of this section is the following theorem that identifies
$\Theta$ as Janson's random variable whose distribution is
characterized by its moments.

\begin{theo}\label{thm:main}
For every positive integer $r$, we have
\[
\M_\infty^\ct[\Theta^r]=\frac{r!}{(2\alpha)^r}\, \int_{\ct^r}\frac{m^\ct(dx_1)\ldots
  m^\ct(dx_r)}{\prod_{i=1}^r\ell(\ct(x_1,\ldots,x_i))}\cdot
\]
\end{theo}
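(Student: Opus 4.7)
The plan is to reduce the theorem to a pointwise expectation identity via Fubini, evaluate that expectation by a Poisson void-probability computation, and then symmetrise. Writing
\[
\Theta^r=\int_{\ct^r}\prod_{i=1}^r\theta(x_i)\,m^\ct(dx_1)\cdots m^\ct(dx_r)
\]
and exchanging expectation and integration, the theorem will follow from the pointwise identity
\[
\M_\infty^\ct\Bigl[\prod_{i=1}^r\theta(x_i)\Bigr]=\frac{1}{(2\alpha)^r}\sum_{\pi}\prod_{k=1}^r\frac{1}{\ell(\ct(x_{\pi(1)},\ldots,x_{\pi(k)}))},
\]
where $\pi$ runs over the $r!$ permutations of $\{1,\ldots,r\}$: once this is established, the invariance of the product measure $m^\ct(dx_1)\cdots m^\ct(dx_r)$ under relabelling collapses the sum into the factor $r!$ of the statement.

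To prove the pointwise identity, I write $\theta(x_i)=\int_0^{+\infty}\ind_{\{\theta(x_i)>q_i\}}\,dq_i$ and exchange integrations. The joint event $\bigcap_i\{\theta(x_i)>q_i\}$ is exactly the absence of atoms of $M$ in $\bigcup_{i=1}^r\lb\emptyset,x_i\rb\times[0,q_i]$. For $y\in\ct(x_1,\ldots,x_r)$ the associated forbidden time-window is $[0,H(y)]$ with $H(y)=\max\{q_i:y\in\lb\emptyset,x_i\rb\}$, so the intensity of the forbidden region is $2\alpha\int_{\ct(x_1,\ldots,x_r)}H(y)\,\ell(dy)$ and the Poisson void formula yields
\[
\M_\infty^\ct\Bigl[\prod_i\theta(x_i)\Bigr]=\int_{(0,+\infty)^r}\exp\!\Bigl(-2\alpha\int_{\ct(x_1,\ldots,x_r)}H(y)\,\ell(dy)\Bigr)\,dq_1\cdots dq_r.
\]

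I evaluate this multiple integral by decomposing $(0,+\infty)^r$ according to the ordering of $(q_1,\ldots,q_r)$. On the simplex $\{q_{\pi(1)}<\cdots<q_{\pi(r)}\}$, any $y\in\lb\emptyset,x_{\pi(j)}\rb\setminus\bigcup_{k>j}\lb\emptyset,x_{\pi(k)}\rb$ satisfies $H(y)=q_{\pi(j)}$. Setting $L_j^\pi=\ell(\ct(x_{\pi(j)},\ldots,x_{\pi(r)}))$ with the convention $L_{r+1}^\pi=0$, the slice $\lb\emptyset,x_{\pi(j)}\rb\setminus\bigcup_{k>j}\lb\emptyset,x_{\pi(k)}\rb$ has length $L_j^\pi-L_{j+1}^\pi$, so the exponent becomes $-2\alpha\sum_{j=1}^r q_{\pi(j)}(L_j^\pi-L_{j+1}^\pi)$. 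The triangular change of variables $u_j=q_{\pi(j)}-q_{\pi(j-1)}$ (with $q_{\pi(0)}=0$) has unit Jacobian and, by Abel summation, turns this into $-2\alpha\sum_j u_jL_j^\pi$, decoupling the integrand; the integral over the simplex therefore factorises as $\prod_{j=1}^r(2\alpha L_j^\pi)^{-1}$. Reindexing by the reversed permutation $\bar\pi(k)=\pi(r-k+1)$ identifies the family $(L_j^\pi)_{j=1}^r$ with $(\ell(\ct(x_{\bar\pi(1)},\ldots,x_{\bar\pi(k)})))_{k=1}^r$, and summing over $\pi$ gives the pointwise identity. The only genuinely technical step is this triangular change of variables that decouples the exponential; the rest is bookkeeping, and the argument uses only the Poissonian character of $M$ and not any specific feature of the Brownian tree.
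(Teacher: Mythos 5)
Your proof is correct and follows essentially the same route as the paper's: both reduce to the Poisson void probability of the space--time region swept out by the paths $\lb\emptyset,x_i\rb$ up to the times $q_i$, and both evaluate the resulting simplex integral by the same telescoping change of variables based on the lengths $\ell(\ct(x_k,\ldots,x_r))$. The only (cosmetic) difference is the order of operations: the paper expands $\Theta^r$ from the representation $\Theta=\int_0^{+\infty}\sigma_q\,dq$ and orders the times at the outset, obtaining the product of exponentials by an inductive Markov-property argument, whereas you start from $\Theta=\int_\ct\theta(x)\,m^\ct(dx)$, compute the void probability of the union directly, and order the times at the end.
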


\begin{proof} 
Using the expression of Proposition \ref{prop:expression-theta} for
$\Theta$, we have
\begin{align*}
\M_\infty^\ct[\Theta^r] 
& = r!\ \M_\infty^\ct\left[\int_{0\le
    q_1<q_2<\cdots<q_r}dq_1\ldots dq_r\, \sigma_{q_1}\ldots
  \sigma_{q_r}\right]\\
& =r!\, \M_\infty^\ct\left[\int_{0\le
    q_1<q_2<\cdots<q_r}dq_1\ldots dq_r \prod_{k=1}^r\int_\ct
  m^\ct(dx_k)\ind_{\{x_k\in\ct_{q_k}\}}\right]\\
& =r!\, \int_{\ct^r}m^\ct(dx_1)\ldots m^\ct(dx_r)\int_{0\le
    q_1<q_2<\cdots<q_r}dq_1\ldots dq_r\, \M_\infty^\ct[x_1\in\ct_{q_1},\ldots,x_r\in\ct_{q_r}].
\end{align*}

To evaluate the probability that appears in the last equation, let us
remark that, if $y\in \ct_{q}$, then $y\in \ct_{q'}$ for every
$q'<q$. Therefore, we have
\begin{multline*}
\M_\infty^\ct[x_1\in\ct_{q_1},\ldots,x_r\in\ct_{q_r}]\\
=\M_\infty^\ct[x_2\in\ct_{q_2},\ldots,x_r\in\ct_{q_r}\bigm|
x_1\in\ct_{q_1},\ldots
,x_r\in\ct_{q_1}]\M_\infty^\ct[x_1\in\ct_{q_1},\ldots
  ,x_r\in\ct_{q_1}].
\end{multline*}

On one hand, we have
\begin{align*}
\M_\infty^\ct[x_1\in\ct_{q_1},\ldots
  ,x_r\in\ct_{q_1}] & =\M_\infty^\ct\left[
  M(\ct(x_1,\ldots,x_r)\times[0,q_1])=0\right]\\
& =\exp\bigl(-2\alpha q_1\ell(\ct(x_1,\ldots,x_r))\bigr).
\end{align*}

On the other hand, by standard properties of Poisson point measures,
we have
$$\M_\infty^\ct[x_2\in\ct_{q_2},\ldots,x_r\in\ct_{q_r}\bigm|
x_1\in\ct_{q_1},\ldots
,x_r\in\ct_{q_1}]=\M_\infty^\ct[x_2\in\ct_{q_2-q_1},\ldots,x_r\in\ct_{q_r-q_1}].$$

We finally obtain by induction, with the convention $q_0=0$:
\[
\M_\infty^\ct[x_1\in\ct_{q_1},\ldots,x_r\in\ct_{q_r}]
=\prod_{k=1}^r\expp{-2\alpha (q_k-q_{k-1})\ell(\ct(x_k,\ldots,
x_r))}.
\]

Plugging this expression in the integral gives, after an obvious
change of variables
\begin{align*} 
\M_\infty^\ct[\Theta^r] 
& =r! \int_{\ct^r}m^\ct(dx_1)\ldots m^\ct(dx_r)\int_{0\le
    q_1<\cdots<q_r}\!\!\! dq_1\ldots dq_r\prod_{k=1}^r\expp{-2\alpha
    (q_k-q_{k-1})\ell(\ct(x_k,\ldots, 
x_r))}\\
& =r! \int_{\ct^r}m^\ct(dx_1)\ldots m^\ct(dx_r)\prod_{k=1}^r\int
_0^{+\infty}da_k\expp{-2\alpha a_k\ell(\ct(x_k,\ldots, 
x_r))}\\
& =\frac{r!}{(2\alpha)^r} \int_{\ct^r}\frac{m^\ct(dx_1)\ldots
  m^\ct(dx_r)}{\prod_{k=1}^r\ell(\ct(x_k,\ldots, 
x_r))}\cdot
\end{align*}
\end{proof}
We can then deduce from the results of
\cite{j:rcrdrt}, that  for $\alpha=2$, under $\N_\infty ^{(1)}$,
$\Theta$ has  Rayleigh distribution. Using then scaling argument in $r$ and
$\alpha$ or directly Corollary \ref{cor:partilal-F-infity} in the
Appendix, we get the following result. 

\begin{cor}
   \label{prop:Laplace-theta}
For  all  $r>0$,  the  random variable
   $Z=\sqrt{\frac{2\alpha}{r}}\Theta$ is distributed under $\N_\infty
^{(r)}$ according to a  Rayleigh distribution with density 
$x\expp{-x^2/2}\ind_{\{x\ge 0\}}$.
\end{cor}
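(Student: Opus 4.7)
The plan is to match the conditional moments from Theorem \ref{thm:main} to those of a Rayleigh random variable, combining a single reference case (handled by Janson) with Brownian scaling in $(\alpha,r)$.

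First I would fix a convenient reference, say $\alpha=2$ under $\N_\infty^{(1)}$, where the tree is coded by a standard normalized Brownian excursion. In that case the prefactor $1/(2\alpha)^k$ in Theorem \ref{thm:main} combines with the integral of $m^\ct$ and $\ell$ so that $\M_\infty^\ct[\Theta^k]$ agrees, up to the natural identifications, with Janson's formula in \cite{j:rcrdrt} for the conditional moments of his limiting random variable $Z_\ct$ in Aldous's CRT. Since Janson established that $Z_\ct$ has unconditional Rayleigh distribution with density $x\expp{-x^2/2}\ind_{\{x\ge 0\}}$, the moment determination gives the corollary in the reference case.

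To extend to arbitrary $\alpha,r>0$, I would use Brownian scaling. A tree under $\N^{(r)}$ with branching mechanism $\psi(u)=\alpha u^{2}$ is coded by $\sqrt{2/\alpha}\,|B|_{\mathrm{ex},r}$, where $|B|_{\mathrm{ex},r}$ is a reflected Brownian excursion of duration $r$; by Brownian scaling this is an explicit rescaling of the reference height function. Under the corresponding bijection of parametrizations, $\ell^\ct$ and $m^\ct$ pick up explicit powers of $r$ and $\alpha$, and the Poisson mark measure of intensity $2\alpha\,\ell^\ct(dx)\,dq$ transforms, after rescaling the time coordinate $q$, into the reference mark intensity. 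Pushing this through the definition $\Theta=\int_\ct\theta(x)\,m^\ct(dx)$ would yield an identity in law $\sqrt{2\alpha/r}\,\Theta \stackrel{(d)}{=} \Theta_{\mathrm{ref}}$, so that $Z$ inherits the Rayleigh distribution from the reference case.

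The main obstacle is the scaling bookkeeping: one must carefully track the joint transformation of $\ell^\ct$, $m^\ct$ and the mark intensity, verify that the time rescaling needed to normalize the mark PPP is compatible with the $\theta$-definition (since $\theta$ is itself defined in terms of the PPP and records of the PPP scale with the intensity), and check that the powers of $r$ and $\alpha$ combine to the correct factor $\sqrt{2\alpha/r}$. An alternative, cleaner route hinted at by the statement is to invoke Corollary \ref{cor:partilal-F-infity} from the Appendix: this presumably evaluates $\N[\Theta^k\ind_{\{\sigma\in dr\}}]$ in closed form by integrating Theorem \ref{thm:main} against the density \reff{eq:densite-s} of $\sigma$ under $\N$, and then disintegrating to get the moments of $Z$ under $\N_\infty^{(r)}$, which one matches to the Rayleigh moments $2^{k/2}\Gamma(1+k/2)$ and concludes by the moment-determination of the Rayleigh law.
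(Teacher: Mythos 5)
Your proposal follows essentially the same route as the paper: the authors likewise obtain the reference case by matching the conditional moments of Theorem \ref{thm:main} with Janson's moment formula for $Z_\ct$ and then invoke a scaling argument in $r$ and $\alpha$ (or, alternatively, Corollary \ref{cor:partilal-F-infity}). The only minor inaccuracy is your guess about the alternative route: Corollary \ref{cor:partilal-F-infity} is derived in the Appendix from the joint Laplace transform of $(\Theta,\sigma)$ via superprocess equations, not by integrating the moment formula against the density of $\sigma$; this does not affect your main argument.
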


In particular, we have easily the first moments of $\Theta$:
\begin{equation}
   \label{eq:moment-int-theta}
\N_\infty ^{(r)}\left[\Theta\right]= \frac{1}{2}\sqrt{\frac{\pi r}{\alpha}}
\quad\text{and}\quad
\N_\infty ^{(r)}\left[\Theta^2\right]= \frac{r}{\alpha}\cdot
\end{equation}

\section{A.s. convergence}
\label{sec:convergence}

\subsection{Statement of the main result}

Let $r\ge 0$ and let $\ct$ be a tree distributed according to
$\N^{(r)}$. 
Let $(U_1,\ldots,U_n)$ be $n$  points  uniformly chosen at random on
$[0,r]$, independent of $\ct$. We denote by $T_n$ the random tree
spanned by these $n$ points i.e.
$$T_n=\ct(U_1,\ldots,U_n)$$
viewed  as a  discrete ordered  weighted  tree.  Notice  that $T_n$  has
$2n-1$ edges. Let $(h_1, \ldots,  h_{2n-1})$ be the lengths of the edges
given in lexicographic order.  We consider the total length of $T_n$:
\[
L_n=\ell(T_n)=\sum_{k=1}^{2n-1} h_k.
\]

We define $m_n$ as the first branching point of $T_n$, i.e.
\begin{equation}
   \label{eq:def-mn}
\bigcap_{k=1}^n\lb\emptyset,p(U_k)\rb=\lb\emptyset,m_n\rb
\end{equation}
and we consider the length of the edge of $T_n$ attached to the root
\begin{equation}
   \label{eq:def-h0n}
h_{\emptyset,n}:=d(\emptyset,m_n)=\ell(\lb \emptyset,m_n\rb)=h_1.
\end{equation}

Let $T_n^*$ be the
sub-tree of $T_n$  where
we   remove  the   edge  $\lb\emptyset,m_n\lb$:
\[
T_n^*=T_n\setminus \lb\emptyset,m_n\lb,
\]
and $L_n^*$ its  total length \textit{i.e.} $L_n^*=L_n-h_{\emptyset,n}$.

We set $\theta(x-)=\inf \{\theta(y), y\in\lb\emptyset,x\lb\}$  and   $X_n^*$ the number
of records on the tree $T_n^*$:
\[
X_n^*=\sum_{x\in T^*_n} \ind_{\{\theta(x-)>\theta(x)\}},
\]

\begin{rem}
The introduction of the tree $T_n^*$ is motivated by the fact that the
number
$$\sum_{x\in T_n} \ind_{\{\theta(x-)>\theta(x)\}}$$
of records  on the whole  tree is $\N_\infty$-a.e.  infinite.  Moreover,
$X_n^*+1$ represents the number of cuts that appears on the reduced tree
$T_n$ until  a mark  appears on  the branch attached  to the  root which
reduces the  tree to a trivial one  consisting of the root  and a single
branch attached to it. Hence it is the analogue of the discrete quantity
$X_n$ and is the right quantity to be studied.
\end{rem}

We can then state the main result of this section which will be
proven in Section \ref{sec:records}.
\begin{theo}
   \label{theo:CV-X_n}
We have that, for all $r>0$,  $\N^{(r)}_\infty $-a.s.:
\[
\lim_{n\rightarrow+\infty }
\frac{X_n^*}{\sqrt{2n}}=\sqrt{\frac{\alpha}{2r}} \Theta=Z. 
\]
\end{theo}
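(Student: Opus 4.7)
The strategy is to write $X_n^*$ as an explicit sum of indicators over records of the mark process and then to analyse this sum by conditioning on $(\ct,M)$. For each record $i\in\ci$, introduce the ancestral subtree $\ct^{(x_i)}=\{y\in\ct:\, x_i\in\lb\emptyset,y\rb\}$ with total mass $\mu_i=m^\ct(\ct^{(x_i)})$. A first step is to verify that, $\N^{(r)}_\infty$-a.s.,
\[
X_n^*=\sum_{i\in\ci}\ind_{\{1\le \nu_i^{(n)}\le n-1\}},\qquad \nu_i^{(n)}:=\#\{1\le j\le n:\, p(U_j)\in\ct^{(x_i)}\}.
\]
Indeed $x_i\in T_n$ iff some $U_j$ descends from $x_i$, i.e.\ $\nu_i^{(n)}\ge 1$; and $x_i\in\lb\emptyset,m_n\lb$ iff all the $U_j$'s descend from $x_i$ (the boundary event $x_i=m_n$ has probability zero). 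Hence $x_i\in T_n^*$ iff $1\le\nu_i^{(n)}\le n-1$.

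Conditionally on $(\ct,M)$, the $U_j$'s are i.i.d.\ of law $m^\ct/r$, so $\nu_i^{(n)}$ is Binomial$(n,\mu_i/r)$, which yields
\[
\superE\!\left[X_n^*\bigm|\ct,M\right]=\sum_{i\in\ci}\Bigl[1-(1-\mu_i/r)^n-(\mu_i/r)^n\Bigr].
\]
The core of the proof is to show that $(2n)^{-1/2}$ times this sum converges $\N^{(r)}_\infty$-a.s.\ to $Z$. Each summand is close to $1$ when $\mu_i\gg r/n$ and close to $n\mu_i/r$ when $\mu_i\ll r/n$, so the dominant contribution comes from records with $\mu_i$ of order $r/n$. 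Splitting the sum at this threshold and using an almost-sure asymptotic for the empirical measure $\sum_i\delta_{\mu_i}$ of ancestral masses — in the spirit of Propositions \ref{prop:cv-sqi} and \ref{prop:cv-sqi2}, but for the $\mu_i$'s in place of the fragment masses $\sigma^i$ — should identify the limit as $Z=\sqrt{\alpha/(2r)}\,\Theta$.

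To promote this conditional-mean convergence to the a.s.\ convergence of $X_n^*/\sqrt{2n}$ itself, I would compute the conditional variance. The indicators $\ind_{\{1\le\nu_i^{(n)}\le n-1\}}$ have covariances controlled by standard hypergeometric identities on the nested or disjoint subsets $\ct^{(x_i)}\cap\ct^{(x_j)}$, and summing these bounds should give $\mathrm{Var}(X_n^*\mid\ct,M)=o(n)$. Chebyshev's inequality along a polynomial subsequence $n_k=\lfloor k^{1+\varepsilon}\rfloor$, Borel--Cantelli, and a monotonicity argument interpolating between consecutive $n_k$'s then complete the proof.

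The main obstacle is the identification of the limit in the conditional-mean step: one needs an almost-sure statement connecting the family of ancestral masses $(\mu_i)_{i\in\ci}$ to $\Theta$, whereas the methods of Section \ref{sec:janson} only yield moment formulas. This calls for a fragmentation-level analysis of the Poisson cutting procedure (building on \cite{as:psf,adv:plcrt}) that parallels the identity $\Theta=\int_0^{+\infty}\sigma_q\,dq$ of Proposition \ref{prop:expression-theta} and produces the precise prefactor $\sqrt{\alpha/(2r)}$ rather than the constant $2\sqrt{\alpha/\pi}$ which appears in the analogous statement for the fragment masses $\sigma^i$.
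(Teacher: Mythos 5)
Your combinatorial starting point is correct and genuinely different from the paper's: the identity $X_n^*=\sum_{i\in\ci}\ind_{\{1\le \nu_i^{(n)}\le n-1\}}$ holds $\N^{(r)}_\infty$-a.s. (the boundary event $x_i=m_n$ is indeed negligible since marks a.s. avoid the countably many branch points), the binomial formula for the conditional mean given $(\ct,M)$ follows, and the variance/Borel--Cantelli/monotonicity scheme for upgrading to a.s. convergence of $X_n^*$ itself is plausible. The genuine gap is where you locate it, but it is worse than a missing lemma: the key input you propose is not correctly formulated. Records accumulate at the root (as the paper's remark after the definition of $X_n^*$ points out, the number of records on the whole tree is $\N_\infty$-a.e. infinite), and for those records $\mu_i\uparrow r$; consequently $\#\{i\in\ci:\ \mu_i\ge 1/n\}$ is a.s. \emph{infinite} for every $n$ with $1/n<r$, so the direct analogue of Proposition \ref{prop:cv-sqi} for the ancestral masses $(\mu_i)_{i\in\ci}$ that you invoke does not exist. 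Your two-regime description of the summand (``close to $1$ when $\mu_i\gg r/n$'') also misses the third regime $r-\mu_i\lesssim r/n$, where the subtracted term $(\mu_i/r)^n$ is precisely what keeps the sum finite. Finally, even with a correct truncation, identifying the limit as a multiple of $\Theta$ --- a functional of the marks, not just of $\ct$ --- is the entire analytic difficulty: the dependence on $M$ enters only through the \emph{positions} of the records, and nothing in Section \ref{sec:janson} or in Propositions \ref{prop:cv-sqi}--\ref{prop:cv-sqi2} (which concern the fragment masses $\sigma^i$, a different family) supplies such a statement.

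For contrast, the paper sidesteps the empirical measure of ancestral masses entirely. It conditions on $T_n$ rather than on $(\ct,M)$, regards the record count along each branch of $T_n^*$ as a counting process with stochastic intensity $2\alpha\theta(u)\,du$ (the martingales \reff{eq:X-mart} and \reff{eq:X2-mart}), and thereby replaces $X_n^*$ by $2\alpha\int_{T_n^*}\theta(x)\,\ell(dx)$ up to an error whose conditional second moment is $O(\sqrt n)$; the identification of the limit is then performed on this integral via the closed martingale $\N_q^{(r)}[\Theta\mid\cf_n]$, the Poisson decomposition of the subtrees grafted on $T_n$, and the explicit bounds on $H_q(r)=\N_q^{(r)}[\Theta]$ of Proposition \ref{prop:Nr-int-theta}. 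If you wish to pursue your route, the statement you actually need is an a.s. asymptotic for $n^{-1/2}\sum_{i\in\ci}\bigl(1-(1-\mu_i/r)^n-(\mu_i/r)^n\bigr)$, and the natural proof again passes through the stochastic intensity $2\alpha\theta(x)\ell(dx)$ of the record point process and a comparison of $\theta$ integrated against a rescaled length measure with $\Theta=\int_\ct\theta\,dm^\ct$ --- i.e. essentially the same work the paper does, so the detour through the $\mu_i$ buys little.
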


\begin{rem}
   \label{rem:janson}
   Notice  that  the binary  tree  $T_n$  has  $2n-1$ vertices;  and  it
   corresponds to  a critical  Galton-Watson tree with  reproduction law
   taking values in  $\{0,2\}$ and with variance 1  conditionally on its
   number of edges being  $2n-1$. This and Theorem 1.6 in \cite{j:rcrdrt}
   for $\alpha=1/2$  and $r=1$,  imply that the  number of  edges with
   more than one cut is of order less that $\sqrt{n}$.

We deduce from Theorem \ref{theo:CV-X_n} and Corollary \ref{cor:cv-Ln}
that for all $r>0$,  $\N^{(r)}_\infty $-a.s.:
\[
\lim_{n\rightarrow+\infty }
\frac{X_n^*}{L_n}=\alpha \frac{\Theta}{\sigma}\cdot
\]
In the left  hand-side, we have the average of the  number of records on
$T^*_n$ (as  $\ell(T^*_n)$ is  of the  same order as  $L_n$) and  in the
right  hand-side, the  ratio  $\Theta/\sigma$ appears  as  the value  of
$\theta(U)$ for  a leaf chosen  uniformly according to the  normalized mass measure
$m^\ct/\sigma$  and  $\alpha$ is  a constant
related to the branching mechanism. This  result is then natural as
intuitively the normalized mass measure is the weak limit of 
the  normalized length  measure on  $T_n$. 
\end{rem}

\subsection{Other a.s. convergence results}\label{sec:another-repr}

Recall the definition \reff{eq:def-pruned} of the pruned sub-tree $\ct_q$.
Let $\T$ be the set of trees with their mass measure (see
\cite{adh:GHP}). 
We define the
backward filtration $\cg=(\cg_q,q\ge 0)$ with $\cg_q=\sigma(\ct_r, r\geq
q)$.
 Following  \cite{adh:etiltvp},  we get  that
the random measure:
\[
\cn(d\ct',dq)=\sum_{i\in \ci} \delta_{\ct^i,\theta_i}(d\ct',dq)
\]
is under $\N_\infty$ a point measure on $\T\times \R$ with intensity:
\[
\ind_{\{q>0\}} 2\alpha \sigma_q\, \N^q\left[d\ct' \right] \; dq.
\]
This means that for  every non-negative predictable process $(Y(\ct', q), q\in
\R_+, \ct'\in \T)$ with respect to the backward filtration $\cg$,
\begin{equation}
   \label{eq:intensite}
\N_\infty\left[\int Y(\ct', q)\cn(d\ct',dq)\right]=\N_\infty
\left[\int \cy_q\;\ind_{\{q>0\}} 2\alpha \sigma_q\; dq\right],
\end{equation}
where  $(\cy_q=\int Y(\ct',  q) \N^q[d\ct'],q\in  \R_+)$  is predictable
with   respect  to  the   backward  filtration   $\cg$.   We   refer  to
\cite{dvj:itpp1,dvj:itpp2}  for  the   general  theory  of  random  point
measures.

Recall $\sigma^i=m^\ct (\ct^i)$. 

\begin{prop}
\label{prop:cv-sqi}
We have $\N_\infty$-a.e.:
\[
\lim_{n\to+\infty}\frac{1}{\sqrt
  n}\sum_{i\in\ci}\ind_{\{\sigma^i\geq 1/n\}}=
2\sqrt\frac{\alpha}{\pi}\Theta= \sqrt{\frac{2\sigma}{\pi}}\; Z. 
\]
\end{prop}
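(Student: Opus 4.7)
My plan is to decompose the counting variable $N_n := \sum_{i \in \ci}\ind_{\{\sigma^i \ge 1/n\}}$ via the compensation formula \eqref{eq:intensite} and show that the compensator carries the limit while the remainder is negligible after rescaling by $\sqrt{n}$. I work under $\N^{(r)}_\infty$ for fixed $r>0$ and transfer the a.s.\ statement to $\N_\infty$-a.e.\ by disintegrating along \eqref{eq:densite-s} at the end.

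\textbf{Compensator analysis.} Applying \eqref{eq:intensite} to the $\cg$-predictable process $Y(\ct',q)=\ind_{\{m^{\ct'}(\ct')\ge 1/n\}}$, I write $N_n=A_n+M_n$ with
\begin{equation*}
A_n := \int_0^{+\infty} 2\alpha\,\sigma_q\,\N^q[\sigma\ge 1/n]\,dq.
\end{equation*}
Combining Girsanov \eqref{eq:girsanov} with the density \eqref{eq:densite-s}, the law of $\sigma$ under $\N^q$ has density $s\mapsto \expp{-\alpha q^2 s}/(2\sqrt{\alpha\pi}\,s^{3/2})$ on $(0,+\infty)$. Thus
\begin{equation*}
\N^q[\sigma\ge 1/n] \le \N^0[\sigma\ge 1/n] = \sqrt{\frac{n}{\alpha\pi}}
\end{equation*}
uniformly in $q\ge 0$, while a direct computation (e.g.\ by the substitution $u=\alpha q^2 s$ and $\int_\varepsilon^\infty u^{-3/2}e^{-u}du\sim 2/\sqrt{\varepsilon}$) gives, pointwise in $q>0$, $\lim_{n\to\infty}\N^q[\sigma\ge 1/n]/\sqrt{n}=1/\sqrt{\alpha\pi}$. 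Since $\Theta=\int_0^\infty\sigma_q\,dq$ is $\N^{(r)}_\infty$-a.s.\ finite by \eqref{eq:moment-int-theta}, dominated convergence yields
\begin{equation*}
\frac{A_n}{\sqrt{n}} = \int_0^{+\infty} 2\alpha\,\sigma_q\,\frac{\N^q[\sigma\ge 1/n]}{\sqrt{n}}\,dq \;\xrightarrow[n\to\infty]{}\; 2\sqrt{\frac{\alpha}{\pi}}\,\Theta \qquad \N^{(r)}_\infty\text{-a.s.}
\end{equation*}

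\textbf{Control of the martingale part.} The Poisson structure of $\cn$ with compensator $\ind_{\{q>0\}}2\alpha\sigma_q\N^q[d\ct']dq$ in the backward filtration $\cg$ (from \cite{adh:etiltvp}) upgrades \eqref{eq:intensite} to the second-moment identity $\N^{(r)}_\infty[M_n^2]=\N^{(r)}_\infty[A_n]$, using $\ind=\ind^2$ for the indicator. Together with $\N^{(r)}_\infty[A_n]\le 2\sqrt{\alpha n/\pi}\,\N^{(r)}_\infty[\Theta]\le C(r)\sqrt{n}$ via \eqref{eq:moment-int-theta}, this gives $\N^{(r)}_\infty\bigl[(M_{n_k}/\sqrt{n_k})^2\bigr]\le C(r)/k^2$ along the subsequence $n_k:=k^4$. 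Borel-Cantelli then yields $M_{n_k}/\sqrt{n_k}\to 0$ $\N^{(r)}_\infty$-a.s., hence $N_{n_k}/\sqrt{n_k}\to 2\sqrt{\alpha/\pi}\,\Theta$. Finally, since $n\mapsto N_n$ is nondecreasing and $n_{k+1}/n_k\to 1$, the sandwich
\begin{equation*}
\sqrt{\frac{n_k}{n_{k+1}}}\cdot\frac{N_{n_k}}{\sqrt{n_k}} \;\le\; \frac{N_n}{\sqrt{n}} \;\le\; \sqrt{\frac{n_{k+1}}{n_k}}\cdot\frac{N_{n_{k+1}}}{\sqrt{n_{k+1}}}\qquad(n_k\le n\le n_{k+1})
\end{equation*}
propagates the limit to the full sequence.

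\textbf{Main obstacle.} The delicate step is the second-moment identity for $M_n$: equation \eqref{eq:intensite} is a first-moment (compensation) statement, so one must genuinely invoke that $\cn$ is a Poisson point measure with the stated stochastic intensity in $\cg$, as established in \cite{adh:etiltvp}. Once this is granted, the pointwise asymptotics and uniform bound for $\N^q[\sigma\ge 1/n]$, the Borel-Cantelli along $n_k=k^4$, and the monotone sandwich are routine.
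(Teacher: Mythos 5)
Your overall architecture is the same as the paper's: split the count into its compensator plus a martingale remainder, show the compensator rescaled by $\sqrt{n}$ converges to $2\sqrt{\alpha/\pi}\,\Theta$, kill the remainder by a second-moment bound and Borel--Cantelli along $n_k=k^4$, and finish with monotonicity of $n\mapsto N_n$. The compensator analysis is correct (the paper uses monotone rather than dominated convergence, but both apply since $\N^q[\sigma\ge 1/n]/\sqrt{n}=\int_1^{+\infty}\expp{-\alpha q^2 u/n}\,du/(2\sqrt{\alpha\pi}\,u^{3/2})$ is increasing in $n$), and the subsequence/sandwich step is fine.

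The genuine gap is in the control of the martingale part, and it is exactly the point where you place the measure. The identity $\N^{(r)}_\infty[M_n^2]=\N^{(r)}_\infty[A_n]$ is asserted under the conditioned law $\N^{(r)}_\infty$, but the point-measure structure of \cite{adh:etiltvp} — and hence both the compensation formula \reff{eq:intensite} and the predictable-bracket identity — holds under $\N_\infty$ with respect to the backward filtration $\cg$. The total mass $\sigma$ is measurable with respect to $\cg_0$, the \emph{terminal} $\sigma$-field of that filtration, so conditioning on $\{\sigma=r\}$ destroys the backward martingale property (just as conditioning a Poisson process on its terminal value does); neither $\N^{(r)}_\infty[N_n]=\N^{(r)}_\infty[A_n]$ nor the second-moment identity survives. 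If you instead retreat to $\N_\infty$, where the structure does hold, your bound becomes vacuous: integrating \reff{eq:moment-int-theta} against \reff{eq:densite-s} gives $\N_\infty[\Theta]=+\infty$, and indeed $\N_\infty[A_n]=\int_0^{+\infty}q^{-1}\N^q[\sigma\ge 1/n]\,dq=+\infty$ because of the divergence at $q=0$. This is precisely why the paper localizes with the backward stopping time $\tau_K=\inf\{q:\sigma_q<K/2\alpha\}$: it works with $Q_n(\tau_K)$, whose compensator involves $\min(\sigma_q,K/2\alpha)$ and has finite $\N_\infty$-expectation of order $\sqrt{n}(1+\log n)$ (still summable after the $n^4$ rescaling), keeps the backward martingale property for the stopped processes $N_n(\theta\vee\tau_K)$ and $M_n(\theta\vee\tau_K)$, and only at the very end removes the localization by noting that $\sigma<+\infty$ $\N_\infty$-a.e.\ forces $\tau_K=0$ for $K$ large. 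Your proof needs this (or an equivalent) localization; as written, the key second-moment step is invoked under a measure where it fails.
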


\begin{proof}
Let $K>0$ be large. We consider the $\cg$-stopping time $\tau_K=\inf\{q;
\sigma_q <K/2\alpha\}$. We define for every $\theta>0$ and every positive integer $n$,
\[
Q_n(\theta)=\sum_{i\in\ci}\ind_{\{
  \sigma^i\geq 1/n\}}\ind_{\{\theta_i> \theta\}}.  
\]
We have $Q_n(\tau_K)=\sum_{i\in I} \ind_{\{
  \sigma^i\geq 1/n\}}\ind_{\{\sigma_{\theta_i+}<K/2\alpha\}} $ so  that:
\begin{align*}
\N_\infty \left[Q_n(\tau_K)\right]
&= \N_\infty\left[\int_{ \tau_K}
  ^{+\infty}dq\; \sigma_q\N\left[\ind_{\{ \sigma\geq 1/n\}}\expp{-\alpha 
  q^2\sigma}\right]\right]\\
&\leq \N_\infty\left[\int_0
  ^{+\infty}dq\; \min\left(\sigma_q, \frac{K}{2\alpha}\right)
  \N\left[\ind_{\{ \sigma\geq 1/n\}}\expp{-\alpha  
  q^2\sigma}\right]\right]\\
&=  \int_0^{+\infty} dq\; \N\left[\min\left(\sigma,
    \frac{K}{2\alpha}\right) \expp{-\alpha q^2 \sigma} \right]
 \N\left[\ind_{\{ \sigma\geq 1/n\}}\expp{-\alpha  
  q^2\sigma}\right]\\
&=\inv{4\alpha{\pi}} 
\int_0^{+\infty}dq\; \int_0^{+\infty }  \frac{du}{u^{3/2}} \min\left(u,
    \frac{K}{2\alpha}\right) \expp{-\alpha  
    q^2 u}
\int_{1/n}^{+\infty}\frac{dr}{r^{3/2}}\expp{-\alpha  
    q^2 r}\\
&=\inv{8\alpha^{3/2} \sqrt{\pi}} 
\int_{\R_+^2}   \frac{du}{u^{3/2}}\, \frac{dr}{r^{3/2}}\,   \min\left(u,
    \frac{K}{2\alpha}\right) \inv{\sqrt{u+r}}\ind_{\{r>1/n\}},
\end{align*}
where  we  used \reff{eq:intensite}  for  the  first equality,  Girsanov
formula \reff{eq:girsanov},  and the density  \reff{eq:densite-s} of the
distribution of $\sigma$ under $\N$.
Elementary computations yields there exists a finite constant $c$ which
depends on $K$ but not on $n$ such that:
\begin{equation}
   \label{eq:majo-intensite}
\N_\infty \left[Q_n(\tau_K)\right]
= \N_\infty\left[\int_{ \tau_K}
  ^{+\infty}dq\; \sigma_q\N\left[\ind_{\{ \sigma\geq 1/n\}}\expp{-\alpha 
  q^2\sigma}\right]\right]\\
\leq  c\sqrt{n}(1+\log(n)).
\end{equation}
Classical results on random point measures imply that the process
$(N_n(\theta\vee \tau_K),\theta\geq  0)$, with:
\[
N_n(\theta)=Q_n(\theta)-2\alpha\int_\theta^{+\infty}
dq\;\sigma_q\N\left[\ind_{\{\sigma\geq 1/n\}}\expp{-\alpha
  q^2\sigma}\right]
\]
is a backward martingale with respect to $\cg$. Moreover, since
$(Q_n(\theta),\theta\ge 0)$ is a pure jump process with jumps of size
1, the process $(M_n(\theta\vee \tau_K),\theta\geq  0)$, with:
\[
M_n(\theta)=N_n(\theta)^2-2\alpha\int_\theta^{+\infty}dq\;\sigma_q
\N\left[\ind_{\{\sigma\geq 1/{n}\}}\expp{-\alpha  
  q^2\sigma}\right]
\]
is also a backward  martingale with respect to $\cg$. 
Using \reff{eq:majo-intensite}, we get that $   \N_\infty\left[
   \left(N_{n^4}(\tau_K)/n^2\right)^2\right]$ is less than a constant times
   $n^{-3/2}$; 
   therefore 
\[
\sum_{n=1}^{+\infty}\left(\frac{N_{n^4}(\tau_K)}{n^2}\right)^2
\]
is finite in $L^1(\N_\infty )$ and thus is $\N_\infty$-a.e. finite. This
implies that $\N_\infty$-a.e.:
\[
\lim_{n\rightarrow+\infty }
\frac{N_{n^4}(\tau_K)}{n^2}=0.
\]
Moreover, we have by monotone convergence:
\begin{align*}
\frac{2\alpha}{\sqrt{n}}\int_{\tau_K}^{+\infty}dq\;
\sigma_q\N\left[\ind_{\{\sigma\geq 1/{n}\}}\expp{-\alpha 
 q^2\sigma}\right] 
&=2\alpha\int_{\tau_K}^{+\infty}dq\; \sigma_q
\int_1^{+\infty}\frac{dr}{2\sqrt{\alpha\pi}r^{3/2}}\expp{-\alpha 
  q^2 \frac{r}{n}}\\ 
&\; \xrightarrow[n\rightarrow \infty ]{\N_\infty \text{-a.e.}} \; 
2\sqrt\frac{\alpha}{\pi}\int_{\tau_K}^{+\infty}dq\sigma_q.
\end{align*}
We get that the sequence $(Q_{n^4}(\tau_K)/n^2, n\geq 1)$ converges
$\N_\infty$-a.e. toward 
$2\sqrt\frac{\alpha}{\pi}\int_{\tau_K} ^{+\infty}dq\; \sigma_q$. Since
$(Q_n(\theta), n\ge 1)$ is 
non-decreasing, we deduce that $\N_\infty $-a.e.:
\[
\lim_{n\rightarrow +\infty } \frac{Q_n(\tau_K)}{\sqrt
  n}=2\sqrt\frac{\alpha}{\pi} \int_{\tau_K}^{+\infty}dq\; \sigma_q.
\]
Since $\sigma $ is finite $\N_\infty $-a.e., we get that $\N_\infty
$-a.e. $\tau_K=0$ for
$K$ large enough. This gives the result. 
\end{proof}

\begin{prop} We have $\N_\infty$-a.e.:
\label{prop:cv-sqi2}
\[
\lim_{n\to+\infty}\sqrt{n}\sum_{i\in\ci}\sigma^i\ind_{\{\sigma^i\le
1/    n\}}=2\sqrt{\frac{\alpha}{\pi}}\Theta=
  \sqrt{\frac{2\sigma}{\pi}}\; Z. 
\]
\end{prop}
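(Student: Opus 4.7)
The plan is to follow closely the proof of Proposition \ref{prop:cv-sqi}, with the indicator $\ind_{\{\sigma^i \ge 1/n\}}$ replaced by the weighted indicator $\sigma^i \ind_{\{\sigma^i \le 1/n\}}$. The same localization at $\tau_K=\inf\{q;\sigma_q<K/(2\alpha)\}$ and the same backward-martingale method apply; the only change is that the relevant moments of $\sigma$ under $\N$ acquire extra factors of $r$, and the renormalization $\sqrt n$ now arises from integrating the small values of $\sigma$ rather than the tail of its density.

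For $\theta\ge 0$ set
\[
R_n(\theta)=\sum_{i\in\ci}\sigma^i\ind_{\{\sigma^i\le 1/n\}}\ind_{\{\theta_i>\theta\}}.
\]
From \reff{eq:intensite} and the Girsanov formula \reff{eq:girsanov}, the $\cg$-compensator of $R_n$ is
\[
A_n(\theta)=2\alpha\int_\theta^{+\infty}dq\;\sigma_q\,\N\!\left[\sigma\ind_{\{\sigma\le 1/n\}}\expp{-\alpha q^2\sigma}\right],
\]
so that $(N_n(\theta\vee\tau_K),\theta\ge 0)$ with $N_n=R_n-A_n$ is a backward $\cg$-martingale, whose predictable quadratic variation is obtained by replacing $\sigma$ by $\sigma^2$ in the integrand of $A_n$. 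Using the density \reff{eq:densite-s} of $\sigma$ under $\N$ and the change of variable $r=u/n$,
\[
\sqrt n\,\N\!\left[\sigma\ind_{\{\sigma\le 1/n\}}\expp{-\alpha q^2\sigma}\right]=\int_0^1\frac{du}{2\sqrt{\alpha\pi\,u}}\expp{-\alpha q^2u/n}\;\xrightarrow[n\to\infty]{}\;\frac{1}{\sqrt{\alpha\pi}},
\]
the integrand being dominated by $1/(2\sqrt{\alpha\pi\,u})$ uniformly in $q$. Since $\int_{\tau_K}^{+\infty}\sigma_q\,dq\le\Theta<+\infty$ $\N_\infty$-a.e., dominated convergence applied path by path yields
\[
\sqrt n\,A_n(\tau_K)\;\xrightarrow[n\to\infty]{\N_\infty\text{-a.e.}}\;2\sqrt{\frac{\alpha}{\pi}}\int_{\tau_K}^{+\infty}\sigma_q\,dq.
\]

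For the martingale part, the Girsanov--Fubini calculation that produced \reff{eq:majo-intensite}, with $\sigma_q$ dominated on $[0,+\infty)$ by $\min(\sigma_q,K/(2\alpha))$, now leads to
\[
\N_\infty\!\left[\langle N_n\rangle_{\tau_K}\right]\le c\int_0^{+\infty}\!\!du\int_0^{1/n}\!dr\;\frac{\min(u,K/(2\alpha))\,r^{1/2}}{u^{3/2}\sqrt{u+r}}
\]
for some constant $c=c(K)$. Splitting the $u$-integral at $1/n$ and at $K/(2\alpha)$ and bounding $r^{1/2}/\sqrt{u+r}$ by $1$ when $u\le 1/n$ and by $r^{1/2}/\sqrt u$ otherwise, each of the three pieces is of order at most $n^{-3/2}\log n$. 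Hence $\N_\infty[(\sqrt n\,N_n(\tau_K))^2]=O(n^{-1/2}\log n)$; along the subsequence $n_k=k^4$ these terms are summable, so Borel--Cantelli yields $k^2\,N_{k^4}(\tau_K)\to 0$ $\N_\infty$-a.e., and combining with the previous step,
\[
k^2\,R_{k^4}(\tau_K)\;\xrightarrow[k\to\infty]{\N_\infty\text{-a.e.}}\;2\sqrt{\frac{\alpha}{\pi}}\int_{\tau_K}^{+\infty}\sigma_q\,dq.
\]

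To upgrade from the subsequence to every $n$, observe that $n\mapsto R_n(\tau_K)$ is non-increasing (the indicator $\ind_{\{\sigma^i\le 1/n\}}$ tightens with $n$), so for $k^4\le n\le(k+1)^4$ one has the sandwich
\[
k^2\,R_{(k+1)^4}(\tau_K)\le\sqrt n\,R_n(\tau_K)\le(k+1)^2\,R_{k^4}(\tau_K),
\]
and both extremes share the same limit. Letting $K\to+\infty$ and using that $\sigma<+\infty$ $\N_\infty$-a.e.\ (so $\tau_K=0$ eventually), together with Proposition \ref{prop:expression-theta} for $\int_0^{+\infty}\sigma_q\,dq=\Theta$ and the definition $Z=\sqrt{2\alpha/\sigma}\,\Theta$, gives the two announced identities. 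The main obstacle is the variance estimate: one has to split the $u$-integral carefully at the two scales $1/n$ and $K/(2\alpha)$ to see that the logarithmic factor appears only in the middle region and does not spoil summability along $n_k=k^4$.
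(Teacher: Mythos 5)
Your proposal is correct and follows essentially the same route as the paper: the same localization at $\tau_K$, the same backward-martingale/compensator decomposition with second-moment control of order $n^{-3/2}\log n$, convergence along the subsequence $n_k=k^4$, and the monotonicity sandwich (correctly reversed, since here the sums decrease in $n$) to recover the full sequence before letting $K\to+\infty$. The only cosmetic differences are that you use dominated rather than monotone convergence for the compensator limit and spell out the three-region split that the paper dismisses as ``elementary computations.''
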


\begin{proof}
  The proof is very similar to the proof of Proposition \ref{prop:cv-sqi}. We set:
\[
Q_n(\theta)=\sum_{i\in\ci}\sigma^i\ind_{\{\sigma^i\le
    1/n\}}\ind_{\{\theta_i\ge \theta\}}.
\]
Mimicking the  proof of Proposition \ref{prop:cv-sqi}, we  have for some
finite constant $c$ which depends on $K$ but not on $n$:
\begin{align*}
\N_\infty \left[Q_n(\tau_K)\right]
&= \N_\infty\left[\int_{ \tau_K}
  ^{+\infty}dq\; \sigma_q\N\left[\sigma \ind_{\{
      \sigma\leq 1/n\}}\expp{-\alpha  
  q^2\sigma}\right]\right]\\
&\leq \inv{8\alpha^{3/2} \sqrt{\pi}} 
\int_{\R_+^2}   \frac{du}{u^{3/2}}\, \frac{dr}{r^{3/2}}\,   \min\left(u,
    \frac{K}{2\alpha}\right) \inv{\sqrt{u+r}}\; r\ind_{\{r\leq 1/n\}}\\
&\leq c n^{-1/2}(1+\log(n))
<+\infty ,
\end{align*}
as well as:
\begin{multline*}
  \N_\infty \left[ \int_{\tau_K}^{+\infty}dq\;\sigma_q
\N\left[\sigma^2 \ind_{\{\sigma\leq 1/{n}\}}\expp{-\alpha  
  q^2\sigma}\right]   \right]\\ 
\begin{aligned}
&\leq \inv{8\alpha^{3/2} \sqrt{\pi}} 
\int_{\R_+^2}   \frac{du}{u^{3/2}}\, \frac{dr}{r^{3/2}}\,   \min\left(u,
    \frac{K}{2\alpha}\right) \inv{\sqrt{u+r}}\; r^2\ind_{\{r\leq 1/n\}}\\
&\leq  c n^{-3/2}(1+\log(n)).
\end{aligned}
\end{multline*}
Classical results on random point measures imply that
the processes 
$(N_n(\theta\vee \tau_K),\theta\geq  0)$ and $(M_n(\theta\vee \tau_K),\theta\geq  0)$, with:
\begin{align*}
N_n(\theta)
&=Q_n(\theta)-2\alpha\int_\theta^{+\infty}
dq\;\sigma_q\N\left[\sigma \ind_{\{\sigma\leq 1/n\}}\expp{-\alpha
  q^2\sigma}\right]\\
M_n(\theta)
&=N_n(\theta)^2-2\alpha\int_\theta^{+\infty}dq\;\sigma_q
\N\left[\sigma^2 \ind_{\{\sigma\leq 1/{n}\}}\expp{-\alpha  
  q^2\sigma}\right]   
\end{align*}
are  backward martingales with respect to $\cg$. 
We get that $   \N_\infty\left[
   \left(n^2N_{n^4}(\tau_K)\right)^2\right]$ is less than a constant times
   $n^{-3/2}$. Following  the proof of  Proposition \ref{prop:cv-sqi}, we
   deduce that $\N_\infty$-a.e. $\lim_{n\rightarrow+\infty }
n^2N_{n^4}(\tau_K)=0$. 
Furthermore, we have:
\begin{align*}
2\alpha\sqrt n \int_{\tau_K}^{+\infty}dq\;
\sigma_q\N\left[\sigma\ind_{\{\sigma\le 
   1/n\}}\expp{-\alpha q^2\sigma}\right] 
& =2\alpha\sqrt n
\int_{\tau_K} ^{+\infty}dq\;
\sigma_q\int_0^\frac{1}{n}\frac{dr}{2\sqrt{\alpha\pi 
  r}}\expp{-\alpha q^2 r}\\
& = 2\alpha
\int_{\tau_K}^{+\infty}dq\; \sigma_q\int_0^1\frac{dr}{2\sqrt{\alpha\pi
  r}}\expp{-\alpha q^2 \frac{r}{n}}\\
& \to 2\sqrt{\frac{\alpha}{\pi}}\int_{\tau_K}^{+\infty}dq\; \sigma_q. 
\end{align*}
We conclude the proof as in the proof of  Proposition
\ref{prop:cv-sqi}. 
\end{proof}

\subsection{The record process on the real half-line}

We consider here the half-line $[0,+\infty)$ instead of a real-tree
  $\ct$ (the half-line is in fact a real tree that we supposed rooted
  at $0$). We define the mark process $M$ under $\M_a$ (we omit the
  $\ct=[0,+\infty)$ in the notation), it is a Poisson point measure on
    $[0,+\infty)^2$ with intensity $2\alpha\ind_{\{x\ge 0,\ 0\le q\le
        a\}}dx\, dq$ and we set for every $x\ge 0$
\[
\theta(x)=\min(a,\inf\{q_i; x_i\leq x\})\quad\text{and}\quad
X(x)=X(0)+ \sum_{0<y\leq x} \ind_{\{\theta(y-)> \theta(y)\}}.
\]

\begin{rem}
Let us denote by $1\ge x_1>x_2>\cdots$ the jumping times of the
process $(\theta(x),0\le x\le 1)$ under $\M_\infty$. By standard
arguments on Poisson point measure, the random variable $x_1$ is
uniformly distributed on $[0,1]$. Conditionally given $x_1$, the
random variable $x_2$ is uniformly distributed on $[0,x_1]$ and so
on. We are thus considering the standard stick breaking scheme and the
random vector $(1-x_1,x_1-x_2,\ldots)$ is distributed according to the
Poisson-Dirichlet distribution with parameter $(0,1)$.
\end{rem}

For  fixed $x$,  $\theta(x)$ represents  the first  time a  mark arrives
between  $x$  and  $0$  (if  it  arrives before  time  $a$  that  is  if
$\theta(x)<a$);  and  $X(x)-X(0)$  denotes  the number  of  (decreasing)
records  of the  process $(\theta(u),\,  u\in  [0,x])$. It  is also  the
number of  cuts that  appear between $x$  and $0$ in  the stick-breaking
scheme before time $a$.

By  construction $\theta$  and $(\theta,X)$  are Markov  processes. 
Notice  that  $\theta$  is non-increasing and $X$ is  non-decreasing,  and
$\M_\infty$-a.s. $X(x)=+\infty $ for every $x> 0$.

As most  of our further proofs will be based on martingale arguments, let
us first compute the infinitesimal generator of the former Markov
processes. Notice first that  $\inf\{q_i; x_i\leq x\}$
is distributed under $\M_\infty$ as an exponential random variable with parameter
$2\alpha x$.  Let $g$ be a bounded measurable function
defined on $[0,+\infty ]$. For every  $q\in [0, +\infty ]$ and $x>0$,
we have
\[
\M_{q}[g(\theta(x))]
=\M_\infty[g(\min(q,Y_x))]
= \expp{-2\alpha q x} g(q) +\int_0^qg(u) \; 2\alpha x\expp{-2\alpha
  xu}\; du, 
\]
where $Y_x$ is exponentially distributed with parameter $2\alpha x$.
Notice that if $g$ belongs to $\cc^1(\R^+)$ with $g'$ bounded on $\R_+$,
we have by an obvious integration by parts that,  for $q\in [0, +\infty ]$ and $x>0$,
\[
\M_{q}[g(\theta(x))]
= g(0) + \int_0^q g'(u)    \; \expp{-2\alpha xu}\; du .
\]
We can then compute the infinitesimal generator of $\theta$ denoted by
$\cl$. Let $g$ be a bounded measurable function defined on
$[0,+\infty ]$ such that $g-g(+\infty )$ is integrable with respect to
the Lebesgue measure on $\R^+$. For $q\in [0, +\infty  ]$, we have:
\begin{align*}
\cl(g)(q) & =\lim_{x\rightarrow 0} \frac{\M_{q}[g(\theta(x))] -g(q)}{x}\\
& = \lim_{x\rightarrow 0} -g(q) \frac{1-\expp{-2\alpha qx}}{x} +
\int_0^q\!\! 2\alpha g(u)
\expp{-2\alpha xu}\; du \\
& = 2\alpha \int_0^q \!(g(u) -g(q))\;  du.
\end{align*}
Therefore, we get that the process $M^g=(M_x^g,
x\geq 0)$ is a martingale under $\M_q$, where $M^g$ is defined by:
\begin{equation}
   \label{eq:def-Mg}
M_x^g=g(\theta(x)) + 2\alpha\int_0^x dy \int_0^{\theta(y)}
\Big(g(\theta(u)) -  g(y)\Big)\; du.
\end{equation}

\begin{rem}
   \label{rem:gC1}
If furthermore $g$ belongs to  $\cc^1(\R^+)$ and if $x\mapsto xg'(x) $ is integrable with respect to
the Lebesgue measure on $\R^+$, then we have for $q\in [0, +\infty  ]$:
\[
\cl(g)(q)= - 2\alpha \int_0^q xg'(x)\; dx.
\]
\end{rem}

Similarly, we can also  compute the infinitesimal generator of $(\theta,
X)$, which we  still denote by $\cl$. This quantity  is of interest only
for $\theta(0)$  finite. Let  $g$ be a bounded measurable function defined on
$\R^+\times \N$.
For $(q,k)\in \R^+\times \N$, we denote by $\M_{(q,k)}$ the law of the
process $(\theta,X)$ starting from $(q,k)$.
 Standard computations on birth and death processes yield
that for $(q,k)\in \R^+\times \N$:
\begin{align*}
\cl(g)(q,k)
&=\lim_{x\rightarrow 0} \frac{\M_{(q,k)}[g(\theta(x),X(x))] -g(q,k)}{x}\\
&= \lim_{x\rightarrow 0} -g(q,k) \frac{1-\expp{-2\alpha qx}}{x}  +
\int_0^q2\alpha g(u,k+1) \;
\expp{-2\alpha xu}\; du +o(1)\\
&= 2\alpha\int_0^q (g(u,k+1) -g(q,k))\;  du.
\end{align*}
In that case, we get that the process $M^g=(M_x^g,
x\geq 0)$ defined by:
\begin{equation}
   \label{eq:def-MgX}
M_x^g=g(\theta(x), X(x)) - 2\alpha\int_0^x dy \int_0^{\theta(y)} \bigg(g(u,X(y)+1)
-g(\theta(y),X(y))\bigg) \; du ,
\end{equation}
is a bounded martingale under $\M_{(q,k)}$.

Finally, let us exhibit some martingales associated with the process
$X$ which show that this process can be viewed as a Poisson process
with stochastic intensity $2\alpha \theta(u)du$.
Let $n\in \N$. Taking $g(q,k)=k\wedge n$ in \reff{eq:def-MgX}, we deduce that the
process $N^{(n)}=(N^{(n)}_x,x\geq 0)$  defined for 
$x\geq 0$ by:
\[
N^{(n)}_x=X(x)\wedge n -2\alpha\int_0^x  \theta(u)\ind_{\{X(u)<n\}}\; du
\]
is a bounded martingale under $\M_{(q,k)}$ (for $q<+\infty $). Notice that for
$(q,k)\in \R^+\times \N$, we have:
\begin{align*}
\M_{(q,k)}[|N^{(n)}_x|]
&\leq  
\M_{(q,k)}[X(x)\wedge n] + 2\alpha\int_0^x  \E_{(q,k)}[\theta(u)] \; du\\
&= k\wedge n+ 2\alpha\int_0^x   \E_{(q,k)}[\theta(u)\ind_{\{X(u)<n\}}]\; du +
2\alpha\int_0^x  \E_{(q,k)}[\theta(u)] \; du \\
&\leq  k+4\alpha qx,
\end{align*}
where  we used  that $X$  is non-negative  in the  first  equality, that
$N^{(n)}$  is  a  martingale  in   the  second one,  and  that  $\theta$  is
non-increasing in the last one. As $(N^{(n)}, n\in \N)$ converges
a.s. to the process $N=(N_x, x\geq 0)$ defined for $x\in \R^+$ by:
\begin{equation}
   \label{eq:X-mart}
N_x=X(x) -2\alpha\int_0^x  \theta(u)\; du, 
\end{equation}
we deduce that $N$ is a martingale under $\M_{(q,k)}$
for every $(q,k)\in \R^+\times \N$.

By  taking $g(q, k)=k^2$ in \reff{eq:def-MgX}  and using  elementary stochastic  calculus and
similar arguments as  above, we also get that the  process $M=(M_x,x\geq 0)$
defined for $x\geq 0$ by:
\begin{equation}
   \label{eq:X2-mart}
M_x=N_x^2 -2\alpha\int_0^x \theta(u)\; du 
\end{equation}
is  a martingale under $\M_{(q,k)}$ for every $(q,k)\in
\R^+\times \N$. 

\subsection{Sub-tree spanned by $n$ leaves}\label{sec:subtree}

We recall here some properties of the sub-tree spanned by $n$ leaves
uniformly chosen.

We first recall the density of $(h_1, \ldots, h_{2n-1})$ under $\N^{(r)}$, see
\cite{a:crt3} or \cite{p:csp} (Theorem 7.9), see also
\cite{dlg:pfalt}. We denote by $L_n$ the total length of $T_n$:
\[
L_n=\sum_{k=1}^{2n-1}h_k.
\]

\begin{lem}
   \label{lem:densite-h}
Under $\N^{(r)}$, $(h_1,
\ldots, h_{2n-1})$ has density:
\[
f_n^{(r)} (h_1, \ldots, h_{2n-1})= 2\,  \frac{(2n-2)!}{(n-1)!}\,
\frac{\alpha^n}{r^n} \, 
L_n\expp{-\alpha L_n^2/r} 
\ind_{\{h_1>0, \ldots, h_{2n-1}>0\}}.
\]
The random variable $L_n^2$, is distributed
   under $\N^{(r)} $ as $r \Gamma_n/\alpha$ where $\Gamma_n$ is a
   $\gamma(1,n)$ random variable with density $ \ind_{\{x>0\}}\, x^{n-1}
   \expp{-x}/(n-1)!$. 
\end{lem}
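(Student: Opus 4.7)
The plan is to combine Aldous's line-breaking construction of the CRT with a Brownian-scaling argument. I first treat the normalized case $\alpha=1/2$, $r=1$ (Aldous's CRT), and then transfer to general $(\alpha,r)$.

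In the normalized case, the CRT can be built iteratively as follows: let $0<\tau_1<\tau_2<\cdots$ be the atoms of a Poisson process on $\R_+$ with intensity $t\,dt$; start with an initial segment of length $\tau_1$, and for $k\ge 2$ graft a new segment of length $\tau_k-\tau_{k-1}$ at a uniform random position on the tree built up to step $k-1$. By Aldous's line-breaking theorem, the $k$-th grafted endpoint is a leaf sampled according to the mass measure, so that after $n$ grafts the resulting reduced tree is distributed as $T_n$ (under $\N^{(1)}$ with $\alpha=1/2$). If $P_1,\dots,P_{n-1}$ denote the successive attachment positions, standard computations on Poisson processes show that the joint density of $(\tau_1,\tau_2-\tau_1,\dots,\tau_n-\tau_{n-1},P_1,\dots,P_{n-1})$ on its natural product domain is $\tau_n\expp{-\tau_n^2/2}=L_n\expp{-L_n^2/2}$.

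Each attachment cuts an existing edge into two, so that for every fixed planar shape of the resulting labelled tree these $2n-1$ construction variables are in a unit-Jacobian bijection with the lex-ordered edge-length vector $(h_1,\dots,h_{2n-1})$. Summing over the $(2n-3)!!$ binary tree shapes with $n$ labelled leaves gives the density of $(h_1,\dots,h_{2n-1})$ as $(2n-3)!!\,L_n\expp{-L_n^2/2}=2^{1-n}\frac{(2n-2)!}{(n-1)!}L_n\expp{-L_n^2/2}$, which matches $f_n^{(1)}$ for $\alpha=1/2$. For arbitrary $(\alpha,r)$, I use Brownian scaling: since under $\N^{(r)}$ the coding function is $\sqrt{2/\alpha}$ times a Brownian excursion of duration $r$, the tree $\ct$ has the same law as Aldous's CRT dilated by $\sqrt{r/(2\alpha)}$ in distances (and by $r$ in total mass). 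Every edge length is therefore multiplied by $\sqrt{r/(2\alpha)}$, and combining the Jacobian $(2\alpha/r)^{(2n-1)/2}$ with the substitution $L_n\mapsto L_n\sqrt{2\alpha/r}$ inside the Aldous density yields precisely the prefactor $2\frac{(2n-2)!}{(n-1)!}\frac{\alpha^n}{r^n}$, giving $f_n^{(r)}$ as claimed.

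Finally, for the distribution of $L_n^2$: since $f_n^{(r)}(h)$ depends on $h$ only through $L_n=\sum_k h_k$, I integrate $f_n^{(r)}$ over the simplex $\{h_k\ge 0,\ \sum_k h_k=s\}$, whose $(2n-2)$-dimensional Lebesgue volume is $s^{2n-2}/(2n-2)!$; this yields a density for $L_n$ proportional to $s^{2n-1}\expp{-\alpha s^2/r}$, and the change of variables $u=\alpha s^2/r$ identifies $\alpha L_n^2/r$ as a $\gamma(1,n)$ random variable. The step I expect to be the main obstacle is the combinatorial identification of the factor $(2n-3)!!=2^{1-n}(2n-2)!/(n-1)!$ when passing from the line-breaking parameterization to the lex-ordered edge vector: one must verify that each binary labelled shape contributes exactly one copy of $L_n\expp{-L_n^2/2}$ and that the counting is done without overcounting internal relabelings. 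The remaining steps (Poisson density, uniform attachments, unit Jacobian, Brownian scaling, and simplex volume integration) are then essentially mechanical.
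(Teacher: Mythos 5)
Your argument is correct. Note that the paper does not prove this lemma at all: it is stated as a recalled fact with pointers to Aldous, to Pitman's Saint-Flour notes (Theorem 7.9) and to Duquesne--Le Gall, so there is no ``paper proof'' to compare against; your derivation via the line-breaking construction is essentially the argument contained in those references, supplemented by the Brownian scaling needed to pass from the normalized CRT to $\N^{(r)}$ with general $\alpha$ (a step the paper leaves implicit). All the quantitative ingredients check out: the joint density of the first $n$ points of the rate-$t\,dt$ Poisson process together with the uniform attachment points telescopes to $\tau_n\expp{-\tau_n^2/2}$; the change of variables to edge lengths is piecewise linear with unit Jacobian on each of the $(2n-3)!!$ leaf-labelled shapes; $(2n-3)!!\,(2\alpha/r)^n=2\,\frac{(2n-2)!}{(n-1)!}\,\frac{\alpha^n}{r^n}$; and the simplex factor $s^{2n-2}/(2n-2)!$ turns the density of $L_n$ into $\frac{2\alpha^n}{(n-1)!\,r^n}\,s^{2n-1}\expp{-\alpha s^2/r}$, i.e.\ $\alpha L_n^2/r\sim\gamma(1,n)$, as one can confirm by checking that $f_n^{(r)}$ integrates to $1$ over $\R_+^{2n-1}$. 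The one point worth stating explicitly in a written-up version is the distinction between the leaf-labelled (unordered) shapes counted by $(2n-3)!!$ and the planar ordering the paper uses to define the lexicographic indexing of $(h_1,\dots,h_{2n-1})$: since the density depends on $h$ only through the symmetric function $L_n$, any measurable re-indexing of the edges leaves it unchanged, so the discrepancy is harmless — but this is exactly the ``no overcounting'' issue you flag, and it deserves one sentence rather than a worry.
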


\begin{cor}
\label{cor:cv-Ln}
 We have that $\N^{(r)}$-a.s. 
\[
\lim_{n\rightarrow+\infty } L_n/\sqrt{n}=\sqrt{r/\alpha}.
\]
\end{cor}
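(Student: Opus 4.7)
The plan is to combine the distributional identity from Lemma \ref{lem:densite-h} with the deterministic monotonicity $L_n\le L_{n+1}$ coming from the coupling $T_n\subset T_{n+1}$ (the sub-tree spanned by $U_1,\ldots,U_n$ is contained in the sub-tree spanned by $U_1,\ldots,U_{n+1}$).

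First, by Lemma \ref{lem:densite-h}, under $\N^{(r)}$ we have $L_n^2\stackrel{(d)}{=}(r/\alpha)\Gamma_n$ with $\Gamma_n\sim\gamma(1,n)$. Since $\E[\Gamma_n]=n$ and $\mathrm{Var}(\Gamma_n)=n$, this yields
\[
\N^{(r)}\!\left[L_n^2\right]=\frac{rn}{\alpha},\qquad
\mathrm{Var}_{\N^{(r)}}(L_n^2)=\frac{r^2 n}{\alpha^2}.
\]
Chebyshev's inequality then gives, for any $\varepsilon>0$, $\N^{(r)}(|L_n^2/n-r/\alpha|>\varepsilon)\le C(\varepsilon)/n$.

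This bound is not summable, so Borel--Cantelli is applied along the sparse subsequence $n_k=k^2$: the series $\sum_k C/k^2$ is finite, hence $\N^{(r)}$-a.s.\ $L_{k^2}^2/k^2\to r/\alpha$, i.e.\ $L_{k^2}/k\to\sqrt{r/\alpha}$.

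Finally I upgrade this to a.s. convergence of the full sequence via the monotonicity $L_n\le L_{n+1}$. For any $n$, pick $k$ with $k^2\le n<(k+1)^2$; then
\[
\frac{k}{k+1}\cdot\frac{L_{k^2}}{k}\le\frac{L_{k^2}}{\sqrt{n}}\le\frac{L_n}{\sqrt{n}}\le\frac{L_{(k+1)^2}}{\sqrt{n}}\le\frac{k+1}{k}\cdot\frac{L_{(k+1)^2}}{k+1},
\]
and both bounds tend a.s.\ to $\sqrt{r/\alpha}$, so $L_n/\sqrt n\to\sqrt{r/\alpha}$ a.s. The only mildly delicate point is that Chebyshev on $L_n^2$ alone gives only a $1/n$ tail, not a summable one; the monotonicity of $L_n$ in $n$ is what allows one to interpolate from the a.s.\ limit along the subsequence $n_k=k^2$ to the full sequence, so it is really this coupling structure (rather than any refined moment estimate) that does the work.
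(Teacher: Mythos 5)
Your proof is correct, but it takes a different route from the paper's. The paper avoids any subsequence or monotonicity argument by computing a fourth moment: since $L_n^2\stackrel{(d)}{=}(r/\alpha)\Gamma_n$ with $\Gamma_n\sim\gamma(1,n)$, one has $\E\bigl[(\Gamma_n/n-1)^4\bigr]=n^{-2}(3+n^{-1})$, so $\N^{(r)}\bigl[\sum_n (L_n^2/n-r/\alpha)^4\bigr]<\infty$, and the a.s.\ convergence of the full sequence follows in one line. You instead stop at the second moment, which via Chebyshev gives only a non-summable $O(1/n)$ tail, and you repair this by Borel--Cantelli along $n_k=k^2$ followed by the sandwich argument based on the deterministic inclusion $T_n\subset T_{n+1}$ (which does hold here, since the $U_i$ form a single i.i.d.\ sequence and $T_n$ is spanned by the first $n$ of them; the paper uses exactly this monotonicity device elsewhere, at the end of the proof of Theorem \ref{theo:CV-X_n} for $X_n^*$). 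Both arguments are complete and correct. What the paper's approach buys is brevity and the fact that it needs nothing beyond the marginal law of $L_n$; what yours buys is that it only uses the variance of $\Gamma_n$, at the cost of invoking the coupling structure of the sequence $(T_n)_{n\ge 1}$. Your closing remark is accurate: with only a second-moment bound, the monotone interpolation is genuinely necessary, whereas a fourth-moment bound would make it superfluous.
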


\begin{proof}
Using Lemma \ref{lem:densite-h}, we compute
\[
   \N^{(r)}\left[\sum_{n=1}^{+\infty} \left(\frac{L_n^2}{n} -
     \frac{r}{\alpha} \right)^4 \right]
=\frac{r}{\alpha} \sum_{n=1}^{+\infty} \E\left[\left(\frac{\Gamma_n}{n} -
     1 \right)^4\right]
= \frac{r}{\alpha} \sum_{n=1}^ {+\infty }
\inv{n^2}\left(3+\inv{n}\right)<+\infty . 
\]
This implies that $\N^{(r)}$-a.s. $\sum_{n=1}^{+\infty} \left(\frac{L_n^2}{n} -
     \frac{r}{\alpha} \right)^4$ is finite which proves the corollary.
\end{proof}

We end this section by studying the edge attached to the root defined in
\reff{eq:def-mn} whose length is denoted $h_{\emptyset,n}$, see \reff{eq:def-h0n}.

\begin{prop}
   \label{prop:h0}
   The  sequence  $(\sqrt{n}h_{\emptyset,n},   n\geq  1)$  converges  in
   distribution under $\N^{(r)}$ to $\sqrt{r/\alpha} \; E_1/2$,  where $E_1$ is an exponential
   random variable with mean 1.
\end{prop}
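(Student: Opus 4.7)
The plan is to exploit the symmetry visible in Lemma \ref{lem:densite-h}: the density
\[
f_n^{(r)}(h_1,\ldots,h_{2n-1}) = 2\,\frac{(2n-2)!}{(n-1)!}\,\frac{\alpha^n}{r^n}\,L_n\,\expp{-\alpha L_n^2/r}\,\ind_{\{h_1>0,\ldots,h_{2n-1}>0\}}
\]
depends on its arguments only through the sum $L_n=\sum_k h_k$. Consequently, conditionally on $L_n$, the vector $(h_1,\ldots,h_{2n-1})$ is uniform on the simplex $\{(x_1,\ldots,x_{2n-1})\in\R_+^{2n-1} : \sum_i x_i = L_n\}$, so the normalized vector $(h_1/L_n,\ldots,h_{2n-1}/L_n)$ has the $\mathrm{Dirichlet}(1,\ldots,1)$ distribution on the standard $(2n-2)$-simplex and is \emph{independent} of $L_n$.

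In particular, since $h_{\emptyset,n}=h_1$ by \reff{eq:def-h0n}, the ratio $h_{\emptyset,n}/L_n$ follows the $\mathrm{Beta}(1,2n-2)$ distribution (independent of $L_n$), with tail function $y\mapsto (1-y)^{2n-2}$ on $[0,1]$. The classical computation $(1-t/(2n))^{2n-2}\to\expp{-t}$ then shows that, under $\N^{(r)}$, $2n\,h_{\emptyset,n}/L_n$ converges in distribution to an exponential random variable $E_1$ of mean $1$, still independent of $L_n$ (and hence of any measurable function of $L_n$).

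To conclude, I write
\[
\sqrt{n}\,h_{\emptyset,n} \;=\; \frac{1}{2}\,\bigl(2n\,h_{\emptyset,n}/L_n\bigr)\,\bigl(L_n/\sqrt{n}\bigr),
\]
and apply Slutsky's theorem together with Corollary \ref{cor:cv-Ln}, which gives $L_n/\sqrt{n}\to\sqrt{r/\alpha}$ $\N^{(r)}$-a.s. This yields convergence in distribution of $\sqrt{n}\,h_{\emptyset,n}$ to $\sqrt{r/\alpha}\,E_1/2$, as announced. There is no genuine obstacle here: the entire argument is driven by the observation that $f_n^{(r)}$ depends only on $L_n$, which automatically produces the Dirichlet/Beta structure and reduces the proposition to the standard Beta-to-Exponential limit combined with Corollary \ref{cor:cv-Ln}.
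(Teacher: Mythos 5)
Your proof is correct, and it takes a genuinely different route from the one in the paper. The paper proceeds by the method of moments: it computes $\N^{(r)}[h_{\emptyset,n}^k]$ for all $k\in(-1,+\infty)$ by integrating $h_1^k$ against the joint density of Lemma \ref{lem:densite-h} (a change of variables, a Beta integral, and two applications of the duplication formula \reff{eq:duplication}), obtains the closed form $\left(\frac{r}{\alpha}\right)^{k/2}\frac{\Gamma(k+1)}{2^k}\frac{\Gamma(n-\frac12)}{\Gamma(n+\frac{k}{2}-\frac12)}$, and identifies the limit because the exponential law is determined by its moments. You instead extract the structural content of Lemma \ref{lem:densite-h} -- the density depends on $(h_1,\ldots,h_{2n-1})$ only through $L_n$ -- to get the Dirichlet$(1,\ldots,1)$ representation of the normalized edge lengths, independent of $L_n$, whence $h_{\emptyset,n}/L_n\sim\mathrm{Beta}(1,2n-2)$; the classical Beta-to-exponential limit and Slutsky's theorem (with Corollary \ref{cor:cv-Ln}) then finish the proof. (Note that independence is not even needed for the Slutsky step, since $L_n/\sqrt{n}$ converges to a constant.) Your argument is more conceptual and avoids all Gamma-function manipulation; what the paper's computation buys in exchange is Lemma \ref{lem:moment-h0}, i.e.\ exact moment asymptotics for every $k>-1$ -- in particular the negative moment $k=-1/2$ used later to establish \reff{eq:majo-h0n} in the proof of Lemma \ref{lem:majo-Mn}. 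Your Beta representation would recover those moments too, via $\N^{(r)}[h_{\emptyset,n}^k]=\E[L_n^k]\,\E[B^k]$ with $B\sim\mathrm{Beta}(1,2n-2)$ and Lemma \ref{lem:densite-h} for the law of $L_n$, but as written your proof does not supply them.
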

\begin{proof}

Let $k\in (-1,+\infty )$. 
    We set $H_k=(\alpha/r)^{k/2}\N^{(r)}[h_{\emptyset,n}^k]$. We have
    using Lemma \ref{lem:densite-h},
\[
H_k
 = 2\frac{(2n-2)!}{(n-1)!}\frac{\alpha^{n+k/2}}{r^{n+k/2}}
 \int_{\R_+^{2n-1}}dh_1\ldots  dh_{2n-1} 
 h_1^k
\; L_n\expp{-\alpha L_n^2/r}. 
\]
Consider the change of variables:
\[
u_1=\sqrt{\frac{\alpha}{r}}h_1, \;
\cdots,\; 
u_{2n-2}=\sqrt{\frac{\alpha}{r}}h_{2n-2},\; 
x=\sqrt{\frac{\alpha}{r}}L_n, 
\]
with Jacobian equal to $\left(\frac{\alpha}{r}\right)^{n-\frac{1}{2}}$.We get:
\begin{align*}
H_k & =2\frac{(2n-2)!}{(n-1)!}\frac{\alpha^{n+k/2}}{r^{n+k/2}}
 \int_{\R_+^{2n-1}}\left(\frac{r}{\alpha}\right)^{k/2}u_1^k\left(\frac{r}{\alpha}\right)^{1/2}x\expp{-\alpha
   x^2/r}\\
&\hspace{6cm}\ind_{\{u_1+\cdots+u_{2n-2}\le
 x\}}\left(\frac{r}{\alpha}\right)^{n-\frac{1}{2}} du_1\cdots
du_{2n-2}\, dx\\
& =2\frac{(2n-2)!}{(n-1)!}\int\int_{\R_+^2}du_1\, dx\, \ind_{\{u_1\le x\}}\;
u_1^kx\expp{-x^2}\int\int_{\R_+^{2n-3}}du_2\ldots  
du_{2n-2}\ind_{\{u_2+\cdots +u_{2n-2}\le x-u_1\}}\\
& =2\frac{(2n-2)!}{(n-1)!}\frac{1}{(2n-3)!}\int_0^{+\infty}dx\, 
x\expp{-x^2}\int_0^xdh\, 
h^k(x-h)^{2n-3}. 
\end{align*}
Set $y=x^2$, to get:
\begin{align*}
H_k 
& =2\; \frac{(2n-2)!}{(n-1)!}\; \frac{1}{(2n-3)!}\beta(k+1,2n-2)
\int_0^{+\infty}dx \;x^{2n+k-1} \expp{-x^2} \\
& =\frac{(2n-2)!}{(n-1)!} \;  \frac{1}{(2n-3)!}\beta(k+1,2n-2)
\int_0^{+\infty}dy\; y^{n+\frac{k}{2}-1} \expp{-r} \\
&=\frac{(2n-2)!}{(n-1)!}\;\inv{(2n-3)!}\; \frac{
\Gamma(k+1) (2n-3)! }{\Gamma(2n+k-1)} \; \Gamma(n+\frac{k}{2})
\\
&=\frac{\Gamma(k+1)}{2^{k}}\; \frac{\Gamma(n- \frac{1}{2})}{\Gamma(n+
  \frac{k}{2}- \frac{1}{2})},
\end{align*}
where, for the last equality, we used twice the duplication formula:
\begin{equation}
   \label{eq:duplication}
\frac{\Gamma(2n-1)}{\Gamma(n)}=\frac{2^{2n-2} \Gamma(n-1/2)}{\sqrt{\pi} }\cdot
\end{equation}
We observe that $\lim_{n\rightarrow +\infty }
\N^{(r)}[n^{k/2}h_{\emptyset,n}^k] = \frac{k!}{2^k}
\left(\frac{r}{\alpha}\right)^{k/2}
=\E[(\sqrt{r}E_1/(2\sqrt{\alpha}))^k]$. This gives 
the result, as the exponential distribution is characterized by its
moments. 
\end{proof}
From the proof of Proposition \ref{prop:h0}, we also get the following
result. 
\begin{lem}
   \label{lem:moment-h0}
For all $k\in (-1, +\infty )$, we have, when $n$ goes to infinity:
\[
\N^{(r)}[h_{\emptyset,n}^k]=\left(\frac{r}{\alpha}\right)^{k/2}
\frac{\Gamma(k+1)}{2^{k}}\; \frac{\Gamma(n- \frac{1}{2})}{\Gamma(n+
  \frac{k}{2}- \frac{1}{2})} \sim (r/\alpha)^{k/2} n^{-k/2}2^{-k}\Gamma(k+1).
\]
\end{lem}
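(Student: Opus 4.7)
The plan is to observe that the first equality in the statement is already essentially contained in the computation carried out in the proof of Proposition \ref{prop:h0}. In that proof, after the change of variables and the substitution $y=x^2$, the quantity $H_k=(\alpha/r)^{k/2}\N^{(r)}[h_{\emptyset,n}^k]$ was evaluated explicitly via a Beta integral and two applications of the duplication formula \reff{eq:duplication}, yielding
\[
H_k=\frac{\Gamma(k+1)}{2^{k}}\;\frac{\Gamma(n-\tfrac{1}{2})}{\Gamma(n+\tfrac{k}{2}-\tfrac{1}{2})}.
\]
Multiplying through by $(r/\alpha)^{k/2}$ produces precisely the closed form claimed in Lemma \ref{lem:moment-h0}. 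Since the derivation is valid for every $k\in(-1,+\infty)$ (the only constraint being integrability of $h_1^k$ near $0$, which requires $k>-1$), no further work is needed for the first part of the statement.

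For the asymptotic equivalence, the only thing left is to control the ratio $\Gamma(n-\tfrac{1}{2})/\Gamma(n+\tfrac{k}{2}-\tfrac{1}{2})$ as $n\to+\infty$. I would invoke the classical Stirling-type asymptotic
\[
\frac{\Gamma(n+a)}{\Gamma(n+b)}\sim n^{a-b}\quad\text{as }n\to+\infty,
\]
applied with $a=-\tfrac{1}{2}$ and $b=\tfrac{k}{2}-\tfrac{1}{2}$, which gives $\Gamma(n-\tfrac{1}{2})/\Gamma(n+\tfrac{k}{2}-\tfrac{1}{2})\sim n^{-k/2}$. Substituting this equivalence back into the closed form immediately yields
\[
\N^{(r)}[h_{\emptyset,n}^k]\sim (r/\alpha)^{k/2}\,n^{-k/2}\,2^{-k}\,\Gamma(k+1),
\]
which is the second part of the claim.

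There is essentially no obstacle here: the lemma is a direct by-product of the explicit moment computation already performed in the proof of Proposition \ref{prop:h0}, followed by a standard gamma-ratio asymptotic. The only minor point worth mentioning is that the formula covers negative $k>-1$ as well, which is consistent with the Beta integral $\int_0^x h^k(x-h)^{2n-3}\,dh$ converging precisely on this range. This extension to non-integer and in particular negative exponents is in fact why the lemma is stated separately from Proposition \ref{prop:h0}, since it will presumably be used to bound negative moments of $h_{\emptyset,n}$ elsewhere in the paper.
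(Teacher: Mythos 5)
Your proposal is correct and matches the paper's own treatment: the paper introduces this lemma with the single sentence ``From the proof of Proposition \ref{prop:h0}, we also get the following result,'' i.e.\ it relies on exactly the same closed-form computation of $H_k$ for general $k>-1$ followed by the standard asymptotic $\Gamma(n+a)/\Gamma(n+b)\sim n^{a-b}$. Nothing is missing.
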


\subsection{Proof of Theorem \ref{theo:CV-X_n}}\label{sec:records}

We first want to show that, as for a standard Poisson process, the
record counting process on each branch behaves like its (stochastic)
intensity when the number of jumps tends to $+\infty$.

In other words, we set:
\[
 \Delta_n=   \frac{X_n^*}{\sqrt{n}} - \frac{2\alpha}{\sqrt{n}}\int_{T_n^*}
      \theta(x) \; \ell(dx)
\]
and we want to prove that $\Delta_n$ tends $a.s.$ to 0 (at least along
some subsequence).

Using the martingale of Equation \reff{eq:X2-mart}, we have that: 
\begin{equation}
   \label{eq:Xn-L2}
\N_\infty ^{(r)}\left[\Delta_n^2\; \Big | \; T_n \right] 
= \frac{2\alpha}{\sqrt{n}}\N_\infty ^{(r)}\left[\inv{\sqrt{n}}\int_{T_n^*}
      \theta(x) \; \ell(dx)\; \Big | \; T_n \right] .
\end{equation}

Then we use the following lemma whose proof is postponed to Section
\ref{sec:proof}. 
\begin{lem}\label{lem:maintheo}
Let $r>0$.
There exists a non-negative sequence $(R'_n,n\ge 1)$ of random
variables adapted to the the filtration $(\sigma(T_n),n\ge 1)$ and
which converges $\N_\infty^{(r)}$-a.s. to 0 such that, for all $n\ge
1$, $\N^{(r)}$-a.s.:
\begin{equation}
   \label{eq:CV-Ntheta}
r\N_\infty ^{(r)}\left[\inv{\sqrt{n}}\int_{T_n^*}
      \theta(x) \; \ell(dx)\; \Big | \; T_n \right] \leq 
\frac{L_n}{\sqrt{n}} \N_\infty ^{(r)}\left[\Theta \; \Big | \; T_n
  \right] + R'_n.
\end{equation}
\end{lem}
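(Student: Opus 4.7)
The plan is to exhibit an explicit Fubini-type inequality between the two sides of \reff{eq:CV-Ntheta}, to define $R'_n$ as the resulting non-negative discrepancy, and then to show this discrepancy vanishes almost surely.

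I would first rewrite $r=\int_\ct m^\ct(dz)$ and $L_n=h_{\emptyset,n}+\int_{T_n^*}\ell(dx)$; exchanging the order of integration in both $r\int_{T_n^*}\theta(x)\,\ell(dx)$ and $L_n\Theta$ produces the pathwise identity
\[
r\int_{T_n^*}\theta(x)\,\ell(dx) - L_n\Theta = -h_{\emptyset,n}\Theta + \int_\ct m^\ct(dz)\int_{T_n^*}\bigl(\theta(x)-\theta(z)\bigr)\,\ell(dx).
\]
Dropping the non-positive term $-h_{\emptyset,n}\Theta$ and taking the $\N^{(r)}_\infty[\,\cdot\,|T_n]$ conditional expectation then yields an upper bound; I would set
\[
R'_n := \left( r\,\N^{(r)}_\infty\!\left[\frac{1}{\sqrt n}\int_{T_n^*}\theta\ell\,\Big|\,T_n\right] - \frac{L_n}{\sqrt n}\,\N^{(r)}_\infty[\Theta|T_n]\right)^{+}\!,
\]
which is non-negative, $T_n$-measurable, and makes \reff{eq:CV-Ntheta} hold by construction.

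Next I would compute the conditional expectations explicitly. For any $y\in \ct\setminus\{\emptyset\}$, the marks on $\lb\emptyset,y\rb$ form, conditionally on $\ct$, a Poisson process of rate $2\alpha d(\emptyset,y)$, so $\N^{(r)}_\infty[\theta(y)|\ct]=1/(2\alpha d(\emptyset,y))$. By Fubini,
\[
\N^{(r)}_\infty\!\left[\int_{T_n^*}\theta\ell\,\bigg|\,T_n\right] = \int_{T_n^*}\frac{\ell(dx)}{2\alpha d(\emptyset,x)} \quad\text{and}\quad \N^{(r)}_\infty[\Theta|T_n] = \N^{(r)}\!\left[\int_\ct\frac{m^\ct(dz)}{2\alpha d(\emptyset,z)}\,\bigg|\,T_n\right].
\]
To show $R'_n\to 0$ $\N^{(r)}_\infty$-a.s., I would argue that the two pieces inside the positive part share the same a.s.\ limit. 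Since $(\sigma(T_n))_n$ is increasing and generates $\sigma(\ct)$, forward martingale convergence yields $\N^{(r)}[\int_\ct m^\ct(dz)/(2\alpha d(\emptyset,z))\,|\,T_n]\to\int_\ct m^\ct(dz)/(2\alpha d(\emptyset,z))$ a.s.; combined with $L_n/\sqrt n\to\sqrt{r/\alpha}$ from Corollary~\ref{cor:cv-Ln}, the second term converges a.s.\ to $\sqrt{r/\alpha}\int_\ct m^\ct(dz)/(2\alpha d(\emptyset,z))$. For the first term, I would match this limit by using that, as $n\to\infty$, the rescaled length measure $\ell|_{T_n}/L_n$ on the spanned subtree converges weakly to the normalized mass measure $m^\ct/r$ (a standard feature of the $n$-leaf sampling construction of $T_n$); working on $T_n^*$ excludes the root edge, and Proposition~\ref{prop:h0} guarantees $h_{\emptyset,n}=O(1/\sqrt n)$ so this exclusion is harmless.

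The delicate point will be precisely this last convergence: transferring the weak convergence $\ell|_{T_n}/L_n \to m^\ct/r$ to the \emph{unbounded} function $1/d(\emptyset,\cdot)$, which blows up at $\emptyset$. This will require splitting the integration at a depth threshold $\varepsilon>0$, bounding the near-root contribution using moment estimates drawn from Lemma~\ref{lem:densite-h} and Proposition~\ref{prop:h0}, and passing from $L^p$ control to almost sure convergence along a subsequence of the type $n=k^4$, mimicking the argument already deployed in Propositions~\ref{prop:cv-sqi}--\ref{prop:cv-sqi2}.
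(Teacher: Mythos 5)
Your reduction is clean and the elementary computations are right: defining $R'_n$ as the positive part of the discrepancy makes \reff{eq:CV-Ntheta} tautological, $\N^{(r)}_\infty[\theta(y)\mid \ct]=1/(2\alpha\, d(\emptyset,y))$ is correct since $\theta(y)$ is exponential of rate $2\alpha\,\ell(\lb\emptyset,y\rb)$, and the martingale convergence $\N^{(r)}_\infty[\Theta\mid T_n]\to \N^{(r)}_\infty[\Theta\mid\ct]=\int_\ct m^\ct(dz)/(2\alpha\, d(\emptyset,z))$ together with $L_n/\sqrt n\to\sqrt{r/\alpha}$ handles the second term. But the entire content of the lemma is then concentrated in the claim
\[
\frac{1}{L_n}\int_{T_n^*}\frac{\ell(dx)}{d(\emptyset,x)}\;\xrightarrow[n\to\infty]{\text{a.s.}}\;\frac1r\int_\ct\frac{m^\ct(dz)}{d(\emptyset,z)},
\]
and this is exactly the step you leave as a plan rather than a proof. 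The difficulty is not cosmetic. Writing $\int_{T_n^*}\ell(dx)/d(\emptyset,x)=\int_{h_{\emptyset,n}}^{\infty}k_n(t)\,t^{-1}dt$ with $k_n(t)$ the number of branches of $T_n^*$ at height $t$, the crude bound of $k_n(t)$ by the number of branches of $\ct$ at height $t$ (which grows like $1/t$ as $t\downarrow 0$) gives a near-root contribution of order $1/h_{\emptyset,n}\sim\sqrt n$ --- the same order as the main term, not smaller. So the threshold-splitting you propose does not close on its own: one needs a genuinely finer control of how many of the $n$ sampled leaves sit in distinct subtrees near the root, and neither Lemma~\ref{lem:densite-h} nor Proposition~\ref{prop:h0} (which only concern the vector of edge lengths and $h_{\emptyset,n}$) supplies it. Likewise the a.s.\ weak convergence $\ell|_{T_n}/L_n\to m^\ct/r$ is invoked as ``standard'' but is nowhere established in the paper (it appears only as an intuition in Remark~\ref{rem:janson}).

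For comparison, the paper sidesteps this analysis entirely: conditionally on $\cf_n$ the subtrees grafted on $T_n$ form a Poisson point measure (Le Gall's theorem, Lemmas~\ref{lem:Poisson-decomp}--\ref{lem:E(r),Ln}), so $\N^{(r)}_\infty[\Theta\mid\cf_n]$ can be written as $2\alpha\int_{T_n^*}\ell(dx)\,\E^{(r),L_n}[H_{\theta(x)}(\sigma)]$ plus a vanishing term, and the explicit two-sided bound $qr-\tfrac12\sqrt{\alpha\pi}\,q^2r^{3/2}\le H_q(r)\le qr$ of Proposition~\ref{prop:Nr-int-theta} yields \reff{eq:CV-Ntheta} with the concrete remainder $R_n=\tfrac12 r^2\expp{-\alpha L_n^2/r}\theta_{\emptyset,n}^2$, whose decay is immediate from $L_n\sim\sqrt{nr/\alpha}$. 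Unless you supply the missing near-root estimate, your argument does not prove the lemma.
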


With this lemma, we have:
\begin{align*}
\N_\infty ^{(r)}\left[\sum_{n\geq 1} \Delta_{n^4} ^2\ind_{\{R'_{n^4}\leq
    1\}} \right] 
&= \sum_{n\geq 1} \N_\infty ^{(r)}\left[\N_\infty
  ^{(r)}\left[\Delta_{n^4} ^2 | \; T_{n^4} \right]\ind_{\{R'_{n^4}\leq
    1\}}\right]  \\ 
&\leq \sum_{n\geq 1}
  \frac{2\alpha}{n^2r} \N_\infty ^{(r)}\left[\left(\frac{L_{n^4}}{n^2}
      \N_\infty  
  ^{(r)}\left[\Theta \;  \Big| \; T_{n^4}
  \right]+R'_{n^4}\right)\ind_{\{R'_{n^4}\leq 
    1\}} \right]  \\
&\leq \sum_{n\geq 1} \frac{2\alpha}{n^2 r} \left(\frac{1}{n^2} \N_\infty
  ^{(r)}\left[L^2_{n^4} \right]^{1/2}
\N_\infty
  ^{(r)}\left[\Theta^2  \right]^{1/2}+1\right)
\\
&<+\infty ,
\end{align*}
where  we used  \reff{eq:Xn-L2}  and \reff{eq:CV-Ntheta}  for the  first
inequality,  Cauchy-Schwartz inequality  for the  second one,  and Lemma \ref{lem:densite-h} as well  as \reff{eq:moment-int-theta}  for  the
last one.
This result implies that  $\N_\infty ^{(r)}$-a.s. $\lim_{n\rightarrow +\infty }
\Delta_{n^4}\ind_{\{R'_{n^4}\leq    1\}}=0$    and    thus    $\N_\infty
^{(r)}$-a.s.  $\lim_{n\rightarrow  +\infty   }  \Delta_{n^4}=0$  as  the
sequence $(R'_n, n\geq 1)$  converges $\N^{(r)}$-a.s. to $0$.

\medskip
In order to conclude, it remains to study the asymptotic behavior of
$$\inv{\sqrt{n}}\int_{T^*_n} \theta(x)\; \ell(dx)$$
which is the purpose of the next proposition which will also be proven
in Section \ref{sec:proof}.

\begin{prop}
   \label{prop:maintheo}
We have that, for all $r>0$,  $\N^{(r)}_\infty $-a.s.:
\begin{equation}
   \label{eq:CV-theta}
\lim_{n\rightarrow +\infty } \inv{\sqrt{n}}\int_{T^*_n} \theta(x)\; \ell(dx)
= \inv{\sqrt{r 
    \alpha} } \Theta.
\end{equation}
\end{prop}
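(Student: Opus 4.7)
The plan is to use Fubini to convert the LHS into a level-set integral over $q$, establish pointwise $\N_\infty^{(r)}$-a.s.\ convergence of $\ell(T_n^*\cap\ct_q)/\sqrt{n}$ at each fixed $q>0$, and then justify the exchange of limit and $q$-integral by a domination argument.

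Writing $\theta(x)=\int_0^{+\infty}\ind_{\{x\in\ct_q\}}\,dq$ and applying Fubini,
\[
\inv{\sqrt{n}}\int_{T_n^*}\theta(x)\,\ell(dx)=\int_0^{+\infty}\inv{\sqrt{n}}\ell(T_n^*\cap\ct_q)\,dq,
\]
while Proposition~\ref{prop:expression-theta} gives $\Theta=\int_0^{+\infty}\sigma_q\,dq$. It therefore suffices to show, for each $q>0$, the $\N_\infty^{(r)}$-a.s.\ convergence $\ell(T_n^*\cap\ct_q)/\sqrt{n}\to \sigma_q/\sqrt{r\alpha}$, together with an integrable bound in $q$ that allows interchanging the limit and the $q$-integral.

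For the pointwise convergence, I would use that $\theta$ is non-increasing along paths from the root, so $U_i\in\ct_q$ implies $\lb\emptyset,U_i\rb\subset\ct_q$. On the event $\{m_n\in\ct_q\}$, the set $T_n^*\cap\ct_q$ is obtained, up to an initial stem of length $h_{\emptyset,n}=O(1/\sqrt{n})$ (Proposition~\ref{prop:h0}) and smaller-order stubs from leaves outside $\ct_q$, from the subtree of $\ct_q$ spanned by $\emptyset$ and the $N_q:=\#\{i\le n:U_i\in\ct_q\}$ leaves lying in $\ct_q$. The strong law of large numbers gives $N_q/n\to\sigma_q/r$ $\N_\infty^{(r)}$-a.s., and conditionally on $(\ct,M,N_q)$ these $N_q$ leaves are i.i.d.\ uniform on $\ct_q$ w.r.t.\ $m^\ct|_{\ct_q}/\sigma_q$. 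An analogue of Corollary~\ref{cor:cv-Ln} applied to $\ct_q$ (whose branching mechanism $\psi_q(u)=\alpha u^2+2\alpha qu$ retains the quadratic coefficient $\alpha$) then gives spanning-subtree length $\sim\sqrt{N_q\sigma_q/\alpha}$, whence
\[
\inv{\sqrt{n}}\ell(T_n^*\cap\ct_q)\longrightarrow\sqrt{\sigma_q/r}\cdot\sqrt{\sigma_q/\alpha}=\sigma_q/\sqrt{r\alpha}.
\]

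For the interchange of limit and integration in $q$, I would use $\ell(T_n^*\cap\ct_q)\le L_n^*\,\ind_{\{q\le\theta(m_n)\}}$, split $\int_0^{+\infty}=\int_0^Q+\int_Q^{+\infty}$ and apply dominated convergence on $[0,Q]$ (Corollary~\ref{cor:cv-Ln} giving $L_n^*/\sqrt{n}\to\sqrt{r/\alpha}$), while controlling the tail $\int_Q^{+\infty}$ uniformly in $n$ by moment estimates based on Theorem~\ref{thm:main} together with the decay $\int_Q^{+\infty}\sigma_q\,dq\to 0$ as $Q\to+\infty$. The main obstacle I expect is the pointwise step, specifically proving the scaling $\ell(\mathrm{span})/\sqrt{N_q}\to\sqrt{\sigma_q/\alpha}$ for the pruned tree $\ct_q$ with its subcritical mechanism $\psi_q$ (likely via the Girsanov formula~\reff{eq:girsanov} reducing to the critical case handled by Lemma~\ref{lem:densite-h} and Corollary~\ref{cor:cv-Ln}), and rigorously quantifying the smaller-order stub contributions from leaves falling outside $\ct_q$.
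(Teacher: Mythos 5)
Your route --- Fubini over level sets, pointwise a.s.\ convergence of $\ell(T_n^*\cap\ct_q)/\sqrt{n}$ for fixed $q$, then interchange of limit and $q$-integral --- is genuinely different from the paper's. The paper instead introduces the closed martingale $M_n=\N_q^{(r)}[\Theta\mid\cf_n]$ (with $\cf_n=\sigma\bigl(T_n,(\theta(x))_{x\in T_n}\bigr)$), which converges a.s.\ to $\Theta$, and then identifies $M_n$, via the Poisson decomposition of the subtrees grafted on $T_n$ (Lemma \ref{lem:Poisson-decomp}), with $2\alpha\int_{T_n^*}\E^{(r),L_n}[H_{\theta(x)}(\sigma)]\,\ell(dx)$; the explicit two-sided bound $0\le qr-H_q(r)\le\tfrac12\sqrt{\pi\alpha}\,q^2r^{3/2}$ of Proposition \ref{prop:Nr-int-theta} then sandwiches $M_n$ around $\frac{r}{L_n}\int_{T_n^*}\theta\,d\ell$ with errors that are shown to vanish. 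That device reduces everything to martingale convergence plus quantitative moment estimates, and it sidesteps exactly the two places where your argument is incomplete.

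Those two places are genuine gaps, not routine technicalities. First, your dominating function for the tail does not dominate uniformly in $n$: the bound $\ell(T_n^*\cap\ct_q)\le L_n^*\ind_{\{q\le\theta(m_n)\}}$ is correct, but $\theta(m_n)$ is conditionally exponential with parameter $2\alpha h_{\emptyset,n}$ and $h_{\emptyset,n}$ is of order $n^{-1/2}$ (Proposition \ref{prop:h0}), so $\theta(m_n)\uparrow+\infty$ a.s.; hence $\frac{1}{\sqrt n}\int_Q^{+\infty}L_n^*\ind_{\{q\le\theta(m_n)\}}\,dq$ does not become small as $Q\to\infty$ uniformly in $n$. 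Splitting at $Q$ does give the lower bound $\liminf_n\frac{1}{\sqrt n}\int_{T_n^*}\theta\,d\ell\ge\Theta/\sqrt{r\alpha}$ cheaply; it is the matching upper bound that is missing, and ``moment estimates'' would at best give control in probability or along subsequences, not the claimed a.s.\ convergence of the full sequence. Second, the ``stub'' correction is not small for free: $T_n^*\cap\ct_q$ is the subtree of $\ct_q$ spanned by the exit points of \emph{all} $n$ paths $\lb\emptyset,U_i\rb$, and the number of components of $\ct\setminus\ct_q$ hit by at least one $U_i$ is itself typically of order $\sqrt n$ (compare Proposition \ref{prop:cv-sqi}), so you are grafting order $\sqrt n$ additional spanning points onto the span of the interior leaves. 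One can argue heuristically that each contributes length $O(n^{-1/2})$ so that the total excess is $O(1)$ in expectation, but turning this into an a.s.\ $o(\sqrt n)$ statement for every $q$ requires a real argument that you have not supplied. Until both points are filled, the proof is incomplete.
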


We deduce
from  \reff{eq:CV-theta},  that  $\N_\infty  ^{(r)}$-a.s.  the  sequence
$(X_{n^4}^*/n^2,         n\geq          1)$         converges         to
$2\sqrt{\frac{r}{\alpha}}\Theta$. Then using  that $(X_n^*, n\geq 1)$ is
increasing, we get for $k\in \N$, such that $n^4<k\leq (n+1)^4$, that:
\[
 \frac{n^2}{(n+1)^2}\; \frac{X^*_{n^4}}{n^2}\leq \frac{X^*_k}{\sqrt{k}}
\leq \frac{(n+1)^2}{n^2}\; \frac{X^*_{(n+1)^4}}{(n+1)^2}\cdot
\]
Thus,    we   get    that   $\N_\infty    ^{(r)}$-a.s.    the   sequence
$(X_{k}^*/\sqrt{k},         k\geq        1)$         converges        to
$2\sqrt{\frac{\alpha}{r}}\Theta$.

\subsection{Proof of Proposition \ref{prop:maintheo} and Lemma \ref{lem:maintheo}}
\label{sec:proof}

First,    let    us    remark    that,   as    $L_n/\sqrt    n$    tends
$\N_\infty^{(r)}$-a.s.     to     $\sqrt{r/\alpha}$     by     Corollary
\ref{cor:cv-Ln} and is $\sigma(T_n)$-measurable, it suffices to study the limit of
\[
\frac{1}{L_n}\int_{T_n^*}\theta(x)\, \ell(dx).
\]

Let us exhibit a martingale that converges to $\Theta$.
Let $\cf_n$  be the $\sigma$-field  generated by $T_n$  and $(\theta(x),
x\in  T_n)$.   The  filtration  $(\cf_n,  n\geq  1)$ is  increasing
towards  $\vee_{n\geq  1} \cf_n=\cf$,  the  $\sigma$-field generated  by
$\ct$ and $(\theta(s), s\in [0,\sigma])=(\theta(x), x\in \ct)$.

We consider the process $(M_n, n\geq 1)$ defined by, for $q\in [0, +\infty ]$:
\[
M_n=\N_q ^{(r)}\left[\Theta \;  \Big | \;
  \cf_n \right].
\]
Thanks to \reff{eq:moment-int-theta}, 
we get that:
\[
\N^{(r)}_q [M_n^2]
\leq  \N_q^{(r)}\left[\Theta^2
\right]
\leq   \N_\infty ^{(r)}\left[\Theta^2
\right]= \frac{r}{\alpha}\cdot 
\]
Therefore  $(M_n,  n\geq  1)$  is  (a well  defined)  square  integrable
non-negative     martingale.      In     particular     it     converges
$\N_q^{(r)}$-a.s.  (and   in  $L^2(\N_q^{(r)})$)  to   $\Theta$  as  the
increasing $\sigma$-fields $\cf_n$ increase to $\cf$.

In the next  lemma whose proof is given  in Section \ref{sec:proof-lem},
we compare $\frac{1}{L_n}\int_{T_n^*}\theta(x)\, \ell(dx)$ to $M_n$.
\begin{lem}
   \label{lem:majo-Mn}
We have, for $n\geq 1$, 
\begin{equation}
   \label{eq:Majo-Mn-VR}
-R_n \leq M_n - \frac{r}{L_n} \int_{T^*_n} \theta(x)\;\ell(dx) \leq  V_n,
\end{equation}
where     $(R_n,    n\geq     1)$ and  $(V_n, n\geq
1)$   are  non-negative   sequences  which   converge  $\N^{(r)}_\infty
$-a.s. to $0$. Furthermore the non-negative sequence $(R'_n, n\geq 1)$,
with $R'_n=\N_\infty ^{(r)}[R_n | T_n]\; L_n/\sqrt{n}$, converges 
 $\N^{(r)}_\infty
$-a.s. to $0$. 
\end{lem}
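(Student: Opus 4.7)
The plan is to compute $M_n=\N_q^{(r)}[\Theta\mid\cf_n]$ explicitly via the spine decomposition of $\ct$ along $T_n$ and show that it differs from $\frac{r}{L_n}\int_{T_n^*}\theta(x)\ell(dx)$ by two non-negative errors that vanish a.s. Since $m^\ct$ is concentrated on the leaves of $\ct$ and the leaves of $\ct$ lying in $T_n$ carry no $m^\ct$-mass, one has
\[
\Theta=\sum_{j}\int_{\ct^{(j)}}\theta(x)\,m^{\ct^{(j)}}(dx),
\]
where $(\ct^{(j)})_j$ are the subtrees hanging off $T_n$ at points $y_j\in T_n$, and for $x\in\ct^{(j)}$ we have $\theta(x)=\min(\theta(y_j),\theta^{(j)}(x))$ with $\theta^{(j)}$ an independent record process on $\ct^{(j)}$ built from the marks located in $\ct^{(j)}$, hence conditionally independent of $\cf_n$.

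Conditionally on $\cf_n$, the family $((y_j,\ct^{(j)},\theta^{(j)}))_j$ is a (conditional) Poisson-type point measure on $T_n\times(\text{marked trees})$ whose intensity factors as $\ell(dy)\otimes\nu(d\ct',d\theta')$, the measure $\nu$ being induced by $\N$ and the conditioning $\sigma=r$; mass conservation then forces $\nu[m^{\ct'}(\ct')]=r/L_n$, since the total grafted mass equals $r$ and is spread along $T_n$ with density $\ell$. Applying the Poisson master formula and writing $\min(q,u)=\int_0^q\ind_{\{s<u\}}ds$, one obtains $M_n=\int_{T_n}G(\theta(y))\,\ell(dy)$ with $G(q)=\int_0^q\nu\bigl[m^{\ct'}(\{x:\theta'(x)>s\})\bigr]ds$. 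The function $G$ is concave with $G'(0)=r/L_n$ and $G(\infty)=:B_n<\infty$. Splitting $T_n=T_n^*\sqcup\lb\emptyset,m_n\rb$ and writing $G(q)=\frac{r}{L_n}q-\int_0^q\bigl(\frac{r}{L_n}-\nu[m^{\ct'}(\{\theta'>s\})]\bigr)ds$ yields
\[
M_n-\frac{r}{L_n}\int_{T_n^*}\theta(y)\,\ell(dy)=V_n-R_n,
\]
where the non-negative boundary error is $V_n=\int_{\lb\emptyset,m_n\rb}G(\theta(y))\,\ell(dy)\leq B_n\,h_{\emptyset,n}$ and the non-negative curvature error is $R_n=\int_{T_n^*}\ell(dy)\int_0^{\theta(y)}\bigl(\tfrac{r}{L_n}-\nu[m^{\ct'}(\{\theta'>s\})]\bigr)ds$.

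Finally, one verifies $R_n,V_n\to 0$ $\N^{(r)}_\infty$-a.s. For $V_n$, combine $B_n=O(n^{-1/2})$ (using that $\nu[\int_{\ct'}\theta'dm^{\ct'}]$ amounts to $\N^{(r)}[\Theta]$ per unit length of $T_n$, of order $n^{-1/2}$ by \eqref{eq:moment-int-theta} and Corollary \ref{cor:cv-Ln}) with $h_{\emptyset,n}=O(n^{-1/2})$ from Proposition \ref{prop:h0}. For $R_n$, use the continuity $\nu[m^{\ct'}(\{\theta'>s\})]\to r/L_n$ as $s\to 0$ together with the a.s. uniform bound $\theta(y)\le\theta(m_n)<\infty$ for $y\in T_n^*$, and apply dominated convergence as $L_n\to\infty$. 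For the furthermore statement, conditioning on $T_n$ integrates the marks out of $R_n$ (given $T_n$, $\theta(y)$ is exponentially distributed with rate $2\alpha d(\emptyset,y)$ under $\M^\ct_\infty$), and combined with Corollary \ref{cor:cv-Ln} and Lemma \ref{lem:moment-h0} one obtains $\N^{(r)}_\infty[R_n\mid T_n]L_n/\sqrt n\to 0$. The main obstacle will be identifying the conditional grafting intensity $\nu$ under the $\sigma=r$ conditioning precisely enough to justify the exact identity $\nu[m^{\ct'}(\ct')]=r/L_n$, and the subsequent uniform control of the curvature error across the $2n-1$ branches of $T_n$.
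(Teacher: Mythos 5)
Your decomposition is essentially the paper's: conditionally on $\cf_n$ the subtrees grafted along $T_n$ form a Poisson-type point measure homogeneous in $\ell(dy)$, your $G(q)$ is the paper's $2\alpha\,\E^{(r),L_n}[H_q(\sigma)]$, the identity $G'(0)=\nu[m^{\ct'}(\ct')]=r/L_n$ is the first equality in \reff{eq:E-s*}, and the concavity split into a boundary error $V_n$ and a curvature error $R_n$ matches the paper's two one-sided bounds. However, both of your convergence arguments have genuine gaps.

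First, $B_n=G(\infty)=\nu\bigl[\int_{\ct'}\theta'\,dm^{\ct'}\bigr]$ is \emph{infinite}, not $O(n^{-1/2})$. Per unit length of $T_n$ this quantity equals $2\alpha\,\E^{(r),L_n}\bigl[\N_\infty^{(\sigma)}[\Theta]\bigr]=\sqrt{\pi\alpha}\;\E^{(r),L_n}[\sqrt{\sigma}]$ by \reff{eq:moment-int-theta}, and the grafting intensity behaves like $\sigma^{-3/2}d\sigma$ near $0$, so $\E^{(r),L_n}[\sqrt{\sigma}]=+\infty$: there are a.s. infinitely many small grafted subtrees and $\sum_j\sqrt{\sigma_j}=+\infty$. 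Hence the bound $V_n\le B_n h_{\emptyset,n}$ is vacuous. The root edge really is singular here --- one even has $\int_{\lb\emptyset,m_n\rb}\theta(y)\,\ell(dy)=+\infty$ a.s., which is the very reason the statement is about $T_n^*$ rather than $T_n$; your $V_n$ is finite only because $G$ grows logarithmically, and extracting that growth would need further computation. The paper sidesteps this entirely with a soft argument: $V_n=\N_q^{(r)}[\int_{A_n^c}\theta(s)\,ds\mid\cf_n]$ where the sets $A_n^c$ decrease to $\emptyset$ and the integrand is dominated by $\Theta<\infty$, so a martingale-convergence argument along the increasing filtration $(\cf_n)$ gives $V_n\to0$ with no quantitative bound at all.

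Second, your argument that $R_n\to0$ (continuity of $g(s)=\nu[m^{\ct'}(\{\theta'>s\})]$ at $s=0$ plus dominated convergence) does not close. The inner integral runs up to $\theta(y)\le\theta(m_n)$, and $\theta(m_n)\to+\infty$ while the outer domain has length $L_n\to\infty$; continuity at $s=0$ gives no rate, and there is no fixed dominating setup since both the domain and the measure $\nu$ change with $n$. What actually makes $R_n$ vanish is a quantitative estimate: $\frac{r}{L_n}q-G(q)\le C\,q^2\,\E^{(r),L_n}[\sigma^{3/2}]$ together with $\E^{(r),L_n}[\sigma^{3/2}]\le C L_n^{-1}r^2\expp{-\alpha L_n^2/r}$ (the second inequality of \reff{eq:E-s*}), i.e.\ an error that is exponentially small in $n$ and therefore beats the polynomial growth of $\theta(m_n)^2$, the latter being controlled by a Borel--Cantelli argument on $h_{\emptyset,n}$ and $\theta_{\emptyset,n}$. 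This moment computation is the real content of the proof; you correctly flag the uniform control of the curvature error as the main obstacle, but it cannot be replaced by a qualitative continuity argument.
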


This lemma ends the proof of Proposition
\ref{prop:maintheo}. Moreover, as $\N_\infty ^{(r)}[M_n\; |\; T_n]=\N_\infty ^{(r)}[\Theta\; |\;
T_n]$, it also proves Lemma \ref{lem:maintheo}. 

\subsection{Proof of Lemma \ref{lem:majo-Mn}}
\label{sec:proof-lem}
In order to first give a
description of the marked tree
conditionally on $\cf_n$, we consider the sub-trees that are grafted on
$T_n$. For $x,y\in\ct$, we define an equivalence relation by
setting
$$x\sim_{T_n} y\iff \lb\emptyset,x\rb\cap T_n=\lb\emptyset,y\rb\cap T_n$$
and we set $(\ct_i,i\in I_n)$  for the different equivalent classes. The
set $\ct_i$  can be viewed  as a rooted  real tree with  root $x_i=\ct_i
\cap T_n$. Notice that $x_i$ represents the  point of $T_n$ at which the tree
$\ct_i$ is grafted on $T_n$.  Finally, we set $\theta_i=\theta(x_i)$ and
$\sigma_i=m^\ct(\ct_i)$ which corresponds to the length of the height
process of $\ct_i$. 

Using Theorem 3 of \cite{lg:urtbe} (combined with the spatial motion
$\theta$), we get the following result. 

\begin{lem}\label{lem:Poisson-decomp}
Under $\N_q$ conditionally on $\cf_n$, the point measure
$$\sum_{i\in I_n}\delta_{(\ct_i,\theta_i,x_i)}(d\ct,dq',dx)$$
is a Poisson point measure with intensity
$$2\alpha \ind_{T_n}(x)\ell(dx)\; \N[d\ct]\, \delta_{\theta(x)}(dq').$$
\end{lem}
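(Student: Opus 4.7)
The plan is to derive this Poisson description by combining Le Gall's unmarked decomposition theorem with the restriction property of Poisson point measures for the marks. First I would recall that Theorem~3 of \cite{lg:urtbe}, applied under $\N$ (with no marks), states that conditionally on $T_n$, the point measure $\sum_{i\in I_n}\delta_{(\ct_i,x_i)}(d\ct',dx)$ on (real trees)$\,\times T_n$ is Poisson with intensity $2\alpha\,\ind_{T_n}(x)\ell(dx)\,\N[d\ct']$. This gives the backbone of the statement, before the spatial mark component $\theta_i$ is incorporated.

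Next I would add the mark process. Under $\N_q$, given $\ct$ the random measure $M$ is Poisson on $\ct\times[0,q]$ with intensity $2\alpha\,\ell(dx)\,du$. By the restriction property, the restrictions $M|_{T_n\times[0,q]}$ and $(M|_{\ct_i\times[0,q]})_{i\in I_n}$ are, conditionally on $\ct$, mutually independent Poisson point measures. Consequently the process $\theta|_{T_n}$, which is entirely determined by $T_n$ together with $M|_{T_n\times[0,q]}$, is conditionally independent of the collection of grafted subtrees $(\ct_i,x_i)_{i\in I_n}$ given $T_n$. Since $\cf_n=\sigma(T_n,\theta|_{T_n})$, further conditioning on $\theta|_{T_n}$ does not alter the conditional law of $(\ct_i,x_i)_{i\in I_n}$ given $T_n$, so Le Gall's Poisson description persists under $\N_q[\,\cdot\mid\cf_n]$.

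It then remains to identify the second coordinate $\theta_i=\theta(x_i)$. Since $x_i\in T_n$, the value $\theta(x_i)$ is $\cf_n$-measurable, so under $\N_q[\,\cdot\mid\cf_n]$ the second marginal of each atom is a deterministic function of its location $x_i$. Formally, for any nonnegative measurable $F$,
\[
\N_q\!\left[F\!\left(\textstyle\sum_{i\in I_n}\delta_{(\ct_i,\theta_i,x_i)}\right)\Big|\,\cf_n\right]
=\N_q\!\left[F\!\left(\textstyle\sum_{i\in I_n}\delta_{(\ct_i,\theta(x_i),x_i)}\right)\Big|\,\cf_n\right],
\]
and on the right-hand side the conditional law of $\sum\delta_{(\ct_i,x_i)}$ is Poisson with intensity $2\alpha\,\ind_{T_n}(x)\ell(dx)\,\N[d\ct']$. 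By the image/mapping property of Poisson measures, this is exactly the Poisson point measure on (real trees)$\,\times\R_+\times T_n$ with intensity $2\alpha\,\ind_{T_n}(x)\ell(dx)\,\N[d\ct']\,\delta_{\theta(x)}(dq')$.

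The main delicate point is not a computation but the conditional independence argument: one must check that enriching the conditioning from $\sigma(T_n)$ to $\cf_n$ does not disturb Le Gall's Poisson structure on the grafted subtrees. This follows from the fact that the marks living on $T_n$ and those living on the $\ct_i$ come from disjoint pieces of the Poisson measure $M$, and are therefore conditionally independent given $\ct$. All the rest is bookkeeping using the restriction and mapping properties of Poisson point measures.
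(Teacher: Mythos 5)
Your argument is correct and is essentially the elaboration the paper intends: the paper offers no proof beyond the citation of Theorem 3 of \cite{lg:urtbe} ``combined with the spatial motion $\theta$'', and your three steps --- Le Gall's unmarked Poisson decomposition of the grafted subtrees given $T_n$, the conditional independence of $\theta|_{T_n}$ from $(\ct_i,x_i)_{i\in I_n}$ obtained from the restriction property of the mark measure $M$, and the mapping property used to append the $\cf_n$-measurable coordinate $\theta(x_i)$ --- supply exactly the details the citation leaves implicit. The one delicate point you single out, namely that enlarging the conditioning from $\sigma(T_n)$ to $\cf_n$ does not disturb the Poisson structure because $\theta|_{T_n}$ is a function of $M|_{T_n\times[0,q]}$ whose conditional law given $\ct$ depends on $\ct$ only through $T_n$, is indeed the crux and is handled correctly.
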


We deduce from that Lemma the next result.
\begin{lem}
   \label{lem:buisson}
Under $\N_q^{(r)}$ and conditionally on $\cf_n$,  the point measure 
\[
\cn_n(d\sigma, dq', dx)=\sum_{i\in I_n} \delta_{(\sigma_i,\theta_i, x_i)}(d\sigma,dq',dx)
\]
is distributed as a Poisson point measure:
\[
\tilde \cn(d\sigma,dq',dx) =\sum_{j\in J} \delta_{(\tilde\sigma_j,\theta_j,
  x_j)}(d\sigma,dq',dx) 
\]
with intensity $2\alpha \ind_{T_n}(x) \ell(dx)\;
\frac{d\sigma}{2\sqrt{\alpha\pi}\;\sigma^{3/2}}\ind_{\{\sigma>0\}}\; 
\delta_{\theta(x)}(dq')$ 
conditioned on $\{\sum_{j\in J} \tilde \sigma_j=r\}$. 
\end{lem}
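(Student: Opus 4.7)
The plan is to derive the statement from Lemma \ref{lem:Poisson-decomp} by two successive operations on the Poisson point measure $\sum_{i\in I_n}\delta_{(\ct_i,\theta_i,x_i)}$ given there: first pushing the first coordinate forward along the total-mass map $\ct'\mapsto m^\ct(\ct')$, then conditioning on the total mass of $\ct$ being equal to $r$.

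For the first step, Lemma \ref{lem:Poisson-decomp} tells us that under $\N_q$ conditionally on $\cf_n$, the measure $\sum_{i\in I_n}\delta_{(\ct_i,\theta_i,x_i)}$ is Poisson on $\T\times\R_+\times T_n$ with intensity $2\alpha\ind_{T_n}(x)\ell(dx)\,\N[d\ct']\,\delta_{\theta(x)}(dq')$. Applying the pushforward map $(\ct',q',x)\mapsto (m^\ct(\ct'),q',x)$ produces another Poisson point measure whose intensity is obtained by replacing $\N[d\ct']$ with its image under $\ct'\mapsto m^\ct(\ct')$, i.e.\ the law of $\sigma$ under $\N$. By the density formula \reff{eq:densite-s}, this gives exactly the intensity
\[
2\alpha\ind_{T_n}(x)\ell(dx)\,\frac{d\sigma}{2\sqrt{\alpha\pi}\,\sigma^{3/2}}\ind_{\{\sigma>0\}}\,\delta_{\theta(x)}(dq')
\]
for $\cn_n$, still under $\N_q$ conditionally on $\cf_n$. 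Note that although the intensity has infinite total mass (due to the singularity at $\sigma=0$), the sum $\sum_i\sigma_i$ is almost surely finite, as $\int_0^\epsilon\sigma\cdot\sigma^{-3/2}\,d\sigma<\infty$.

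For the second step, we pass from $\N_q$ to $\N_q^{(r)}=\N_q[\,\cdot\mid\sigma=r]$ by regular disintegration, which is well defined thanks to \reff{eq:densite-s}. The key observation is that since $m^\ct$ is diffuse and supported on the leaves of $\ct$, while $T_n$ has only finitely many points (in particular $m^\ct(T_n)=0$), the decomposition of $\ct$ into $T_n$ and the grafted subtrees $(\ct_i,i\in I_n)$ yields $\sigma=\sum_{i\in I_n}\sigma_i$ almost surely. Hence, at the level of $\cn_n$, conditioning $\N_q$ on $\sigma=r$ is exactly conditioning the Poisson point measure just described on $\sum_{i\in I_n}\sigma_i=r$. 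This yields the announced description. The combination of the two conditionings (on $\cf_n$ and on $\{\sigma=r\}$) causes no trouble: since $L_n$ is $\cf_n$-measurable but does not enter the constraint $\sigma=\sum_i\sigma_i$, the conditionings commute and can be applied in either order. No step is a genuine obstacle; the lemma is essentially an immediate consequence of Lemma \ref{lem:Poisson-decomp} together with the explicit density of $\sigma$ under $\N$.
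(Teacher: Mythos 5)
Your proposal is correct and is essentially the argument the paper intends: the paper gives no explicit proof of Lemma \ref{lem:buisson}, simply stating that it is deduced from Lemma \ref{lem:Poisson-decomp}, and the deduction is exactly your two steps (mapping the Poisson point measure of grafted subtrees through the total-mass functional, whose image intensity is \reff{eq:densite-s}, then disintegrating $\N_q$ over $\sigma=\sum_{i\in I_n}\sigma_i$ to pass to $\N_q^{(r)}$). The only point stated a bit loosely is the remark that the two conditionings ``commute''; what matters is that the statement concerns the conditional law given $\cf_n$, so $T_n$ is fixed and the disintegration in $r$ applies atom by atom, which your argument in substance already contains.
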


We can compute some elementary functionals of $\cn_n$. 
\begin{lem}
   \label{lem:E(r),Ln}
Under $\N_q^{(r)}$ and conditionally on $\cf_n$,  the  point
measure $\cn_n$ has intensity:
\[
2 \alpha \ind_{T_n}(x) \ell(dx)\;  \E^{(r),L_n}[d\sigma]\;  \delta_{\theta(x)}(dq),
\]
where $\E^{(r),L_n}$ satisfies, for any non-negative measurable function
$F$:
\[
2\alpha \int_{T_n} \ell(dx)\;  \E^{(r),L_n} [F(x,\sigma)]
=\E\left[\sum_{j\in J} F(s_j, \tilde \sigma_j)\Bigm| \sum_{j\in J}\tilde
\sigma_j=r\right].
\]
We also have:
\begin{equation}
   \label{eq:E-s*}
\E^{(r),L_n} [\sigma]=\frac{r}{2\alpha L_n}
\quad\text{and} \quad 
   \E^{(r),L_n} [\sigma^{3/2}]\leq \frac{2}{\sqrt{\alpha\pi}} \inv{L_n}
   r^2\expp{-\alpha L_n^2/r}.
\end{equation}
\end{lem}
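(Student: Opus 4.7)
The first assertion is essentially a reformulation of Lemma \ref{lem:buisson}. By that lemma, $\cn_n$ is distributed as $\tilde\cn$ conditioned on $S:=\sum_{j\in J}\tilde\sigma_j=r$, where $\tilde\cn$ is the Poisson point measure on $(0,\infty)\times\R_+\times T_n$ of intensity $2\alpha\ind_{T_n}(x)\ell(dx)\,\nu(d\sigma)\,\delta_{\theta(x)}(dq)$ with $\nu(d\sigma)=\frac{d\sigma}{2\sqrt{\alpha\pi}\sigma^{3/2}}\ind_{\{\sigma>0\}}$. Writing the Campbell--Slivnyak--Mecke identity for this Poisson point measure against a functional of the form $F(x,\sigma)\,\ind_{\{S=r\}}$, disintegrated with respect to the density of $S$, produces a conditional intensity which factorises as $2\alpha\ind_{T_n}(x)\ell(dx)\,\E^{(r),L_n}[d\sigma]\,\delta_{\theta(x)}(dq)$. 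The $\sigma$-finite measure $\E^{(r),L_n}[d\sigma]$ is then by construction the one characterised by the displayed identity of the lemma.

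The first moment drops out directly: specialising to $F(x,\sigma)=\sigma$, the left-hand side equals $2\alpha L_n\,\E^{(r),L_n}[\sigma]$ while the right-hand side is $\E[\sum_j\tilde\sigma_j\mid S=r]=r$, giving $\E^{(r),L_n}[\sigma]=r/(2\alpha L_n)$. For the bound on the $3/2$-moment, I take $F(x,\sigma)=\sigma^{3/2}$ and apply Slivnyak--Mecke to $\tilde\cn$, obtaining
\[
f_S(r)\,\E\Big[\textstyle\sum_j\tilde\sigma_j^{3/2}\,\Big|\,S=r\Big]
=\int_0^r\sigma^{3/2}\cdot 2\alpha L_n\,\nu(d\sigma)\, f_S(r-\sigma)
=\frac{L_n\sqrt{\alpha}}{\sqrt\pi}\,\P(S\leq r),
\]
the $\sigma^{3/2}$ in the integrand exactly cancelling the $\sigma^{-3/2}$ in $\nu$. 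The Laplace transform $\E[e^{-\lambda S}]=\exp(-2L_n\sqrt{\alpha\lambda})$ (obtained by integrating $1-e^{-\lambda\sigma}$ against the intensity $2\alpha L_n\,\nu$) identifies $S$ as stable-$\tfrac12$, with explicit density $f_S(r)=\frac{L_n\sqrt\alpha}{\sqrt{\pi r^3}}\,e^{-\alpha L_n^2/r}$. Substituting yields the closed expression
\[
\E^{(r),L_n}[\sigma^{3/2}]=\frac{r^{3/2}\,e^{\alpha L_n^2/r}\,\P(S\leq r)}{2\alpha L_n},
\]
from which the announced bound follows by estimating $\P(S\leq r)$, for instance via the exponential Chernoff inequality $\P(S\leq r)\leq\inf_{\lambda>0}e^{\lambda r-2L_n\sqrt{\alpha\lambda}}=e^{-\alpha L_n^2/r}$, attained at $\lambda=\alpha L_n^2/r^2$.

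The only real subtlety is that $\E^{(r),L_n}$ is $\sigma$-finite rather than finite, because $\tilde\cn$ has infinitely many atoms near $\sigma=0$; the Palm identity therefore has to be formulated as an equality of $\sigma$-finite measures, which one justifies in the standard way by first truncating to $\{\sigma\geq\epsilon\}$ and then letting $\epsilon\downarrow 0$. Apart from this, the computation is routine: Laplace inversion of $e^{-c\sqrt\lambda}$ to identify $f_S$ and an elementary Chernoff estimate for $\P(S\leq r)$. The main obstacle, if any, is keeping track of the various factors of $2\alpha$, $L_n$ and $\sqrt\pi$ correctly.
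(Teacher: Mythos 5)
Your argument for the intensity formula and for the first moment is correct and essentially the paper's (the paper appeals to exchangeability of $(\sigma_i,i\in I_n)$ where you invoke the Mecke formula; same content). Your Palm computation for the $3/2$-moment is also correct: with $f_S(r)=\frac{L_n\sqrt{\alpha}}{\sqrt{\pi r^3}}\,\expp{-\alpha L_n^2/r}$ one gets exactly
\[
\E^{(r),L_n}[\sigma^{3/2}]=\frac{r^{3/2}\,\expp{\alpha L_n^2/r}\,\P(S\le r)}{2\alpha L_n}.
\]
The gap is the final sentence, ``from which the announced bound follows.'' Inserting your Chernoff estimate $\P(S\le r)\le \expp{-\alpha L_n^2/r}$ gives only $\E^{(r),L_n}[\sigma^{3/2}]\le r^{3/2}/(2\alpha L_n)$: the two exponentials cancel exactly and the factor $\expp{-\alpha L_n^2/r}$ required by \reff{eq:E-s*} does not survive. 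No sharpening of the estimate on $\P(S\le r)$ can repair this. Indeed $S$ is the first hitting time of the level $L_n\sqrt{2\alpha}$ by a standard Brownian motion, so $\P(S\le r)=\P\big(|G|\ge L_n\sqrt{2\alpha/r}\big)$ for $G$ standard Gaussian, and the tail equivalence $\P(|G|\ge z)\sim \sqrt{2/\pi}\,z^{-1}\expp{-z^2/2}$ turns your exact formula into
\[
\E^{(r),L_n}[\sigma^{3/2}]\sim \frac{r^{2}}{2\sqrt{\pi}\,\alpha^{3/2}\,L_n^{2}},\qquad L_n\to+\infty ,
\]
which decays only polynomially and eventually exceeds $\frac{2}{\sqrt{\alpha\pi}}\inv{L_n}r^2\expp{-\alpha L_n^2/r}$. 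So your (correct) exact formula in fact contradicts the inequality you are asked to prove.

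The source of the discrepancy lies in the paper's own proof: after computing $\E\big[\sum_j\big(2\sqrt\alpha L_n\tilde\sigma_j^{3/2}+\tilde\sigma_j^2/\Gamma(3/2)\big)\expp{-\mu S}\big]=\frac{2}{\sqrt\pi}\partial_\mu^2\expp{-2L_n\sqrt{\mu\alpha}}=\frac{2}{\sqrt{\pi}}\E\big[S^2\expp{-\mu S}\big]$, the conditional expectation given $S=r$ is read off there by dividing by $\frac{dr}{2\sqrt{\alpha\pi}r^{3/2}}$ rather than by the true density $f_S(r)=2\alpha L_n\expp{-\alpha L_n^2/r}\cdot\frac{1}{2\sqrt{\alpha\pi}r^{3/2}}$ of $S$; the correct value is $2r^2/\sqrt{\pi}$, not $\frac{4\alpha L_n}{\sqrt\pi}r^2\expp{-\alpha L_n^2/r}$. (The first-moment identity is computed with the correct normalisation, so the two halves of \reff{eq:E-s*} rest on inconsistent disintegrations.) What does follow from your formula, via $\P(|G|\ge z)\le \sqrt{2/\pi}\,z^{-1}\expp{-z^2/2}$, is the bound $\E^{(r),L_n}[\sigma^{3/2}]\le r^2/(2\sqrt{\pi}\,\alpha^{3/2}L_n^2)$; you should prove that corrected statement instead, and be aware that its use in the lower bound of Lemma \ref{lem:majo-Mn} then requires a sharper control of $\int_{T_n^*}\theta(x)^2\,\ell(dx)$ than the crude estimate $L_n\theta_{\emptyset,n}^2$, which is no longer sufficient without the exponential factor.
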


\begin{proof}
 The first part of the Lemma is a consequence of the exchangeability of
 $(\sigma_i, i\in I_n)$. With $F(q,r')=r'$, we get:
\[
2\alpha L_n \E^{(r),L_n} [\sigma]
=2 \alpha \int_{T_n} \ell(dx)\;  \E^{(r),L_n} [\sigma]
=\E\left[\sum_{j\in J} \tilde \sigma_j\mid \sum_{j\in J}\tilde
\sigma_j=r\right]=r.
\]
This gives the first equality of \reff{eq:E-s*}. 
Recall that:
\[
\N\left[1-\expp{-\mu \sigma}\right]=\int_0^\infty
\frac{dr}{2\sqrt{\alpha\pi}\;r^{3/2}}\;  \left(1- \expp{-\mu
    r}\right)=\sqrt{\mu/\alpha}. 
\]
We have, using the Palm formula for Poisson point measures, for $a>1/2$:
\begin{align*}
\E\left[\sum_{j\in J} \tilde \sigma_j^a \expp{-\mu \sum_{i\in J}
    \tilde \sigma_i} \right]
& = \E\left[\sum_{j\in J} \tilde \sigma_j^a \expp{-\mu 
    \tilde \sigma_j} 
\expp{-\mu \sum_{i\in J, i\neq j}
    \tilde \sigma_i} \right]\\
& =2\alpha L_n\N\left[\sigma^a \expp{-\mu \sigma}\right]
\exp\left(-2\alpha L_n \N\left[1-\expp{-\mu\sigma}\right] \right)\\
& = 2\alpha L_n\N\left[\sigma^a \expp{-\mu \sigma}\right]
\expp{-2L_n\sqrt{\alpha\mu}}.
\end{align*}

Moreover, we have:
\[
\N\left[\sigma^a \expp{-\mu \sigma}\right]
=\int_0^\infty
\frac{dr}{2\sqrt{\alpha\pi}\;r^{3/2}}\;  r^a\expp{-\mu r}
= \inv{2\sqrt{\alpha\pi} }\Gamma(a-1/2) \mu ^{1/2-a}.
\]

We deduce that:
\begin{multline*}
   \E  \left[\sum_{j\in J} \left(2\sqrt{\alpha} L_n \tilde \sigma_j^{3/2}
       + 
       \inv{\Gamma(3/2)} \tilde \sigma_j^2\right)\expp{-\mu \sum_{i\in J}
    \tilde \sigma_i} \right]\\
   \begin{aligned}
&=2\alpha L_n\expp{-2L_n\sqrt{\alpha\mu}}\left(2\sqrt{\alpha}L_n
  \N[\sigma^{3/2}\expp{-\mu\sigma}]
  +\inv{\Gamma(3/2)}\N[\sigma^2\expp{-\mu\sigma}]\right)\\   
&=2\alpha L_n \expp{-2L_n \sqrt{\alpha\mu}}
  \frac{1}{2\sqrt{\alpha\pi}}\left(\frac{2\sqrt{\alpha}L_n}{\mu}
    +\inv{\mu^{3/2}}\right) \\ 
&=\frac{2}{\sqrt{\pi} }\frac{\partial^2}{\partial \mu^2}
\expp{-2L_n\sqrt{\mu\alpha}}. 
\end{aligned}
\end{multline*}
Let us recall the Laplace transform for the density of a stable
subordinator of index $1/2$: for $a>0$ and $\mu\ge 0$,
$$a\int_0^{+\infty}\frac{dr}{\sqrt{2\pi r^3}}\expp{-\mu
  r-a^2/(2r)}=\expp{-a\sqrt{2\mu}}.$$
>From that formula, we have 
\begin{align*}
\frac{\partial^2}{\partial \mu^2}
\expp{-2L_n\sqrt{\mu\alpha}} 
&=\frac{\partial^2}{\partial \mu^2}
\inv{\sqrt{\pi}} \int_0^{+\infty } \frac{dx}{x^{3/2}}
\expp{-1/x}\expp{-\alpha L_n^2\mu   x}\\
&=\inv{\sqrt{\pi}}\left({\alpha}{L_n^2}\right)^2
 \int_0^{+\infty } dx\; \sqrt{x}
\expp{-1/x}\expp{-\alpha L_n^2\mu   x}\\
&=\frac{L_n\sqrt\alpha}{\sqrt{\pi}}
 \int_0^{+\infty } dr\; \sqrt{r}
\expp{-\alpha L_n^2/r }\expp{-\mu r}\\
&=2\alpha L_n
 \int_0^{+\infty } \frac{dr}{2\sqrt{\alpha\pi}\; r^{3/2}}\;r^2  
\expp{-\alpha L_n^2/r }\expp{-\mu r}.
\end{align*}
We deduce that:
\[
   \E\left[\sum_{j\in J} \left(2\sqrt{\alpha} L_n \tilde \sigma_j^{3/2}
       + 
       \inv{\Gamma(3/2)} \tilde \sigma_j^2\right)\;\Big|\; \sum_{i\in J}
    \tilde \sigma_i=r \right]
=\frac{4\alpha L_n}{\sqrt{\pi}}r^2  
\expp{-\alpha L_n^2/r }.
\]
Then, using the first part of Lemma \ref{lem:E(r),Ln} with
$F(s,\sigma)=2\sqrt\alpha L_n\sigma^{3/2}+
\inv{\Gamma(3/2)}\sigma^2$, we get  the second inequality of \reff{eq:E-s*}.
\end{proof}

Now we prove  Lemma \ref{lem:majo-Mn}.

We consider the set  $I_n^*=\{i\in I_n,\ x_i\ge m_n\}$ of indexes such
that $\ct_i$ is not grafted on the edge $\lb\emptyset,m_n\rb$. We set:
\[
A_n=\{s\ge 0; \lb \emptyset,s\rb\cap T_n^*\ne \emptyset\}=\overline{\bigcup 
_{i\in I^*_n} T^i},\quad
M^*_n=\N_q^{(r)}\left[\int_{A_n} \!\!\theta(s)\; ds \; \Big|\; \cf_n
\right]
\;\; \text{and}\; \; 
V_n=M_n-M^*_n.
\]
Notice that the sequence $(A_n, n\in \N^*)$ is non-decreasing and that
$\bigcap _{n\in \N^*} A_n^c=\emptyset$, as there is no tree grafted on
the root. By dominated convergence, this implies that
$\N^{(r)}_q$-a.s.:
\[
\lim_{n\rightarrow+\infty } \int_{A_n^c} \theta(s)\; ds =0.
\]
As:
\[
V_{n+m} =\N_q^{(r)} \left[\int_{A_{n+m}^c} \theta(s)\; ds \; \Big|\; 
  \cf_{n+m}  \right]\leq  \N_q^{(r)}\left[\int_{A_n^c} \theta(s)\; ds
  \; \Big|\;   \cf_{n+m} \right], 
\]
and   as    $\cf_{n+m}   $   increases    to   $\cf$,   we    get   that
$\limsup_{m\rightarrow+\infty }  V_{n+m}\leq \int_{A_n^c} \theta(s)\;
ds $ and thus $\N^{(r)}_q$-a.s.
\begin{equation}
   \label{eq:lim-Vn}
\lim_{n\rightarrow+\infty } V_{n}=0.
\end{equation}

We define  the function $H_q$  (see Proposition \ref{prop:Nr-int-theta}
for a closed formula) by: 
$$H_q(r)=\N_q^{(r)}[\Theta].$$

 We  have, with
$\Theta_i=\Theta(\ct_i)=\int_{\ct_i} \theta(x)\;m^\ct(dx) $:
\begin{align*}
   M^*_n
= \N_q^{(r)}\left[\int_{A_n} \theta(s)\; ds \; \Big|\; \cf_n \right]
= \N_q^{(r)}\left[\sum_{i\in I_n^*} \Theta_i \; \Big|\; \cf_n \right]
&= \N_q^{(r)}\left[\sum_{i\in I_n^*} \N^{(\sigma_i)}_{\theta(x_i)}
  \left[\Theta\right]
\; \Big|\; \cf_n \right]\\
&= \N_q^{(r)}\left[\sum_{i\in I_n^*} H_{\theta(x_i)}(\sigma_i)
\; \Big|\; \cf_n \right].
\end{align*}
Since   $H_q(r)\leq    qr$,   see   \reff{eq:encadrement-int-theta}   in
Proposition \ref{prop:Nr-int-theta}, we get  using the first equality of
\reff{eq:E-s*} in Lemma \ref{lem:E(r),Ln}:
\[
M^*_n= 2\alpha \int_{T_n^*} \ell(dx) \; \E^{(r),L_n}[H_{\theta(x)} (\sigma)]
\leq  2\alpha \int_{T_n^*} \ell(dx) \; \theta(x) \E^{(r),L_n}[\sigma]=r \inv{L_n}
\int_{T_n^*} \ell(dx) \; \theta(x). 
\]
This gives the upper bound of \reff{eq:Majo-Mn-VR}.

We shall now prove the lower bound of \reff{eq:Majo-Mn-VR}.
Since $H_q(r)\geq qr - \inv{2} \sqrt{\alpha \pi}\; q^2 r^{3/2}$, see
\reff{eq:encadrement-int-theta} in Proposition \ref{prop:Nr-int-theta}, we
also get using  the second equality of \reff{eq:E-s*} in Lemma \ref{lem:E(r),Ln}:
\begin{align*}
M_n\geq M_n^* 
&\geq   r \inv{L_n}
\int_{T_n^*} \ell(dx) \; \theta(x) -\inv{2}\sqrt{\alpha\pi}\; 
\E^{(r),L_n}[\sigma^{3/2}]  \int_{T^*_n} 
\ell(dx) \; \theta(x) ^2 \\
& \geq r  \inv{L_n}
\int_{T_n^*} \ell(dx) \; \theta(x) - \inv{2}  r^2 \expp{-\alpha L^2_n/ r}
\theta_{\emptyset,n}^2 
\end{align*}
where $\theta_{\emptyset,n}=\theta(m_n)$.
This proves the lower bound of \reff{eq:Majo-Mn-VR} with:
\begin{equation}
   \label{eq:def-Rn}
R_n=\inv{2} r^2 \expp{-\alpha L^2_n/r} 
\theta_{\emptyset,n}^2.
\end{equation}
It remains to prove that this quantity tends to 0.
First, we have:
\[
\N_\infty^{(r)}[h_{\emptyset,n}^2\theta_{\emptyset,n}^2] 
=\N_\infty^{(r)}[h_{\emptyset,n}^2\N_\infty^{(r)}
[\theta_{\emptyset,n}^2\,|\,h_{\emptyset,n}]]  =\inv{(2\alpha)^2},
\]
where we used that $\theta_{\emptyset,n}$ is
exponentially distributed conditionally given $h_{\emptyset,n}$ for
the second equality. We deduce that:
\[
\N_\infty^{(r)}\left[\sum_{n=1}^{+\infty}\frac{h_{\emptyset,n}^2
    \theta_{\emptyset,n}^2}{n^2}\right]<\infty  
\]
and hence $\N_\infty^{(r)}$-a.s.:
\[
\sum_{n=1}^{+\infty}\frac{h_{\emptyset,n}^2\theta_{\emptyset,n}^2}{n^2}<\infty.
\]
This implies that, $\N_\infty^{(r)}$-a.s., for some finite
$\cf_n$-measurable random variable  $C_1$:
\[
h_{\emptyset,n}^2\theta_{\emptyset,n}^2\le C_1 n^2.
\]
Using  Lemma \ref{lem:moment-h0}, we have
$\N_\infty^{(r)}[h_{\emptyset,n}^{-1/2}]\sim n^{1/4} \sqrt{\alpha\pi/
  2r}$, 
which implies by similar  arguments that, $\N_\infty^{(r)}$-a.s., for
some finite $\sigma(T_n)$-measurable  random variable  $C_2$:  
\begin{equation}
   \label{eq:majo-h0n}
h_{\emptyset,n}^{-1/2}\le C_2 n^{3/2}.
\end{equation}
Finally, using Formula \reff{eq:def-Rn} for $R_n$, we have
$\N_\infty^{(r)}$-a.s.:  
\[
R_n\le C_1C_2^4\; n^8r^2\, \expp{-\alpha L_n^2/ r}.
\]
As $\N_\infty^{(r)}$-a.s. $\lim_{n\rightarrow +\infty } L_n/\sqrt{n}
=\sqrt{r/\alpha}$, we deduce that $\lim_{n\rightarrow +\infty } R_n=0$. 

Using \reff{eq:majo-h0n}, we deduce that:
\[
R'_n=\frac{L_n}{\sqrt{n}}\N_\infty ^{(r)} [R_n\; |\; T_n]
=\frac{L_n}{\sqrt{n}} \; \frac{r^2}{2}\expp{-\alpha L_n^2/r} \inv{4
  \alpha^2} \inv{h_{\emptyset,n}^2}
\leq C_2^4 \frac{r^2}{8\alpha^2} \; n^{11/2} L_n \expp{-\alpha L_n^2/r} .
\]
Thus,  we get that the non-negative  sequence $(R'_n,  n\geq  1)$, converges
$\N^{(r)}_\infty $-a.s. to $0$, 
which ends the proof.


\section{Appendix}
\label{sec:appendix}

\subsection{Computations on Rayleigh distributions}

Let    $Z$    be   a    Rayleigh    random    variable.
\begin{lem}
   \label{lem:Lap-Rayleigh}
  Let  $\mu>0$, $c\geq  0$. We have: 
\begin{equation}
   \label{eq:Lap-Rayleigh}
\inv{\sqrt{\pi}} \int_0^{+\infty } \frac{dr}{\sqrt{r}}\; \expp{-\mu r} 
\E\left[\expp{-\sqrt{2r}\; c Z'}\right]
= \inv{c+\sqrt{\mu}}\cdot
\end{equation}
\end{lem}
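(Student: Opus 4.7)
The plan is to derive the identity by starting from the right-hand side and recognizing it through a Fubini exchange combined with the Laplace transform of the one-sided $1/2$-stable density that has already been used earlier in the paper (in the proof of Lemma \ref{lem:E(r),Ln}).

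First I would write
\[
\inv{c+\sqrt{\mu}} = \int_0^{+\infty} \expp{-cs}\, \expp{-s\sqrt{\mu}}\, ds
\]
and insert the integral representation
\[
\expp{-s\sqrt{\mu}} = \int_0^{+\infty} \expp{-\mu r}\, \frac{s}{2\sqrt{\pi r^3}}\, \expp{-s^2/(4r)}\, dr,
\]
which is the Laplace transform of the density of the hitting time of a standard Brownian motion; this is exactly the formula invoked in the proof of Lemma \ref{lem:E(r),Ln} with the choice $a=s/\sqrt{2}$. Since the integrand is non-negative, Fubini lets me exchange the order of the $ds$ and $dr$ integrations.

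Next I would evaluate the resulting inner $ds$-integral by the change of variable $z=s/\sqrt{2r}$, so that $s^2/(4r)$ becomes $z^2/2$, $cs$ becomes $cz\sqrt{2r}$, and the Jacobian combines with the prefactor $s/(2\sqrt{\pi r^3})$ to give $z/\sqrt{\pi r}\cdot\expp{-z^2/2}$. Thus
\[
\int_0^{+\infty}\!\! \expp{-cs}\, \frac{s}{2\sqrt{\pi r^3}}\, \expp{-s^2/(4r)}\, ds
= \inv{\sqrt{\pi r}}\int_0^{+\infty}\!\! z\expp{-z^2/2 - cz\sqrt{2r}}\, dz
= \inv{\sqrt{\pi r}}\, \superE\!\left[\expp{-c\sqrt{2r}\, Z}\right],
\]
because the Rayleigh law has density $z\expp{-z^2/2}\ind_{\{z>0\}}$. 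Plugging this back in yields precisely \reff{eq:Lap-Rayleigh}.

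The only point that needs any checking is the Fubini exchange; because the triple integrand is non-negative there is nothing to verify beyond non-negativity, so no technical obstacle arises. Everything else is a bookkeeping change of variables, and there is no main obstacle to speak of — the heart of the argument is the recognition that the stable-$1/2$ Laplace identity used for the subordinator computation in Section \ref{sec:proof-lem} and the Laplace transform of the Rayleigh distribution are dual under the Fubini swap.
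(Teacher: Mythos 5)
Your proof is correct, but it follows a genuinely different route from the paper's. The paper computes the double integral $J=\sqrt{\mu/2}\int_0^\infty \frac{dr}{\sqrt r}\expp{-\mu r}\int_0^\infty x\expp{-x^2/2-c\sqrt{2r}x}\,dx$ directly: after the substitution $t^2=2\mu r$ it becomes a Gaussian-type integral $\int_{[0,\infty)^2}x\exp(-(t^2+x^2+2\rho tx)/2)\,dt\,dx$ with $\rho=c/\sqrt\mu$, which is evaluated by the self-referential trick of writing $x=(x+\rho t)-\rho t$, integrating the exact differential in $x$, and solving the resulting equation $J=\sqrt{\pi/2}-\rho J$. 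You instead start from the right-hand side, expand $1/(c+\sqrt\mu)$ as $\int_0^\infty\expp{-cs-s\sqrt\mu}\,ds$, substitute the Laplace transform of the one-sided $1/2$-stable (Brownian hitting time) density for $\expp{-s\sqrt\mu}$, and apply Tonelli; the inner $ds$-integral then reproduces the Rayleigh Laplace transform after the scaling $z=s/\sqrt{2r}$. Both arguments are short and complete (your Fubini step is justified by non-negativity, as you note, and the stable-$1/2$ identity you invoke is exactly the one already quoted in the proof of Lemma \ref{lem:E(r),Ln}). What your version buys is a probabilistic explanation of why the Rayleigh law appears -- it is the law of $s/\sqrt{2r}$ when $r$ carries the hitting-time density -- and a structural link to the subordinator computation of Section \ref{sec:proof-lem}; what the paper's version buys is self-containedness, since it uses nothing beyond a one-line Gaussian integration by parts.
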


\begin{proof}

We set
\[
J=\sqrt{\frac{\mu}{2}}\int_0^\infty
\frac{dr}{\sqrt{r}}\;  \expp{-\mu 
  r} \int_0^\infty  dx\; x\expp{-x^2/2} \expp{-c\sqrt{2 r} \; x}.
\]
With the change of variable $t^2=2\mu r$ and with
$\rho=c/\sqrt{\mu}$, we get: 
\begin{align*}
J
&=   \int_{[0, +\infty )^2} dt dx\; x\; \exp(-(t^2+x^2+2\rho tx)/2)\\
&=   \int_{[0, +\infty )^2} dt dx\; (x+ \rho  t)\;
\expp{-(t^2+x^2+2\rho tx)/2}  - \rho\int_{[0, +\infty )^2} dt
dx\;t\; \expp{-(t^2+x^2+2\rho tx)/2}  \\
&= \int_0^\infty dt \left[-\exp(-(t^2+x^2+2\rho tx)/2)
\right]_{x=0}^{x=+\infty } - \rho J\\
&= \int_0^\infty dt\; \expp{-t^2/2} - \rho J\\ 
&=\sqrt{\pi/2} -\rho J.
\end{align*}
This implies that $\displaystyle 
J=\frac{\sqrt{\pi/2}}{\rho+1}
=\sqrt{\frac{\mu}{2}}
\frac{\sqrt{\pi}}{c+\sqrt{\mu}}$,
which is exactly what we needed. 
\end{proof}

\subsection{Joint law of $(\Theta,\sigma)$}
Notice that the joint law of $(\Theta,\sigma)$ under $\N_\infty $ is given in
Corollary \ref{prop:Laplace-theta}. However, we shall need the joint
distribution under $\N_q$. For this reason, we compute the Laplace
transform of $(\Theta,\sigma)$ using the theory of 
super-process.

Let $\lambda> 0$, $\mu\geq 0$.
We set for $q\in[0,+\infty]$:
\begin{equation}
   \label{eq:def-F}
F(q)=\N_q\left[1-\expp{-\lambda\Theta-\mu\sigma}\right].
\end{equation}
We define the function:
\begin{equation}
   \label{eq:G2}
G(x)=\left(\sqrt{\frac{\mu}{\alpha}}+\frac{\lambda}{2\alpha}
\right)\expp{\frac{2\alpha}{\lambda}\left(x-\sqrt{\mu/\alpha}\right)} -x -\frac{\lambda}{2\alpha}\cdot 
\end{equation}
The function $G$ is one-to-one from $[\sqrt{\mu/\alpha},+\infty )$ to
$[0,+\infty )$, is increasing and is of class $\cc^\infty $. 

\begin{lem}
   \label{lem:eqF}
   Let $\lambda> 0$,  $\mu\geq 0$. The function $F$  is of class $\cc^1$
   on $[0,+\infty )$ 
   and solves the following equation on $[0, +\infty )$:
\begin{equation}
   \label{eq:int-Ff}
\alpha F(q)^2+ 2\alpha\int_0^q xF'(x)\; dx = \lambda q+\mu.
\end{equation}
Furthermore, we have $F=G^{-1}$. 
\end{lem}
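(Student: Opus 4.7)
The plan is to show $F$ solves the integral equation \reff{eq:int-Ff}, which is equivalent to a Cauchy problem already solved by $G^{-1}$, and conclude by uniqueness. Differentiating \reff{eq:int-Ff} in $q$ yields $2\alpha F'(q)(F(q) + q) = \lambda$, while \reff{eq:int-Ff} at $q=0$ forces $F(0) = \sqrt{\mu/\alpha}$. A direct computation from \reff{eq:G2} confirms that $G(\sqrt{\mu/\alpha}) = 0$ and $G'(x) = (2\alpha/\lambda)(x + G(x))$, so $(G^{-1})'(q)(G^{-1}(q) + q) = \lambda/(2\alpha)$ as well, meaning $G^{-1}$ satisfies the same Cauchy problem. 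Since $u \mapsto \lambda/[2\alpha(u+q)]$ is locally Lipschitz in $u$ on the region $\{u+q > 0\}$, Picard--Lindel\"of gives uniqueness and hence $F = G^{-1}$.

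The preliminary ingredients are the initial value and the $\cc^1$ regularity of $F$. Under $\N_0$ no mark is present, so $\theta \equiv 0$ and $\Theta = 0$; hence $F(0) = \N[1 - \expp{-\mu\sigma}] = \sqrt{\mu/\alpha}$ by integrating against the density \reff{eq:densite-s}. Writing $\Theta = \int_0^q \sigma_s\, ds$ under $\N_q$ by Proposition \ref{prop:expression-theta} (with $\theta$ capped at $q$), I differentiate under the integral sign to get $F'(q) = \lambda\, \N_q[\sigma_q \expp{-\lambda\Theta - \mu\sigma}]$, which is continuous in $q$ by dominated convergence. Thus $F \in \cc^1([0, +\infty))$.

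The central step is the derivation of the snake-type functional equation
\[
\psi(F(q)) - \cl F(q) = \lambda q + \mu,
\]
with $\psi(u) = \alpha u^2$ the branching mechanism and $\cl g(q) = 2\alpha \int_0^q (g(u) - g(q))\, du$ the generator of $\theta$ computed just before \reff{eq:def-Mg}. Since $\lambda\Theta + \mu\sigma = \int_\ct (\lambda\theta(x) + \mu)\, m^\ct(dx)$ is an additive functional of the CRT with spatial motion $\theta$ and integrand $f(q) = \lambda q + \mu$, this identity is the classical L\'evy-snake / super-process equation for the Laplace functional of such an additive functional under $\N_q$. I will derive it from the branching property of the CRT under $\N$ combined with Dynkin's formula for $\theta$ applied to the martingale \reff{eq:def-Mg}. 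Substituting the explicit form of $\cl$ produces $\alpha F(q)^2 + 2\alpha q F(q) - 2\alpha \int_0^q F(s)\, ds = \lambda q + \mu$, and a single integration by parts on the $\cc^1$ function $F$ replaces the last two terms on the left with $2\alpha \int_0^q s F'(s)\, ds$, yielding \reff{eq:int-Ff}. This derivation is the principal obstacle: although the snake equation is classical in L\'evy-tree theory, a fully self-contained proof requires careful tracking of the branching structure of $\ct$ together with the Poisson mark structure.
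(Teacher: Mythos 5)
Your overall route is the paper's route: establish the stationary snake equation $\psi(F(q))-\cl F(q)=\lambda q+\mu$, expand the generator and integrate by parts to obtain \reff{eq:int-Ff}, then identify $F$ with $G^{-1}$ through the ODE $2\alpha F'(q)(F(q)+q)=\lambda$ with initial value $F(0)=\sqrt{\mu/\alpha}$. Your expansion of $\cl$ and the integration by parts are correct, and your way of getting $\cc^1$ regularity (writing $\Theta=\int_0^q\sigma_s\,ds$ under $\N_q$ and differentiating under the integral sign) is a workable alternative to the paper's, which instead deduces continuity from the monotonicity of $F$ and the quadratic formula once the functional equation is available.

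The genuine gap is that the central identity $\psi(F)-\cl F=\lambda q+\mu$ is asserted as ``classical'' and its derivation explicitly deferred; that derivation is where essentially all the work of the lemma lies. The paper does carry it out: it introduces the truncated functionals $u_t(q)=\N_q\bigl[1-\exp\bigl(-\int_0^\sigma f(\hat\theta_s)\ind_{\{\zeta_s\le t\}}\,ds\bigr)\bigr]$, invokes Theorem II.5.11 of \cite{p:dwsmvd} for the mild-form integral equation they satisfy, uses the Markov property of $\theta$ together with monotone convergence and the a priori bound $u_t(q)\le F(q)\le\sqrt{(\lambda q+\mu)/\alpha}$ to pass to $t\to+\infty$, and then subtracts the Dynkin martingale \reff{eq:def-Mg} to conclude that a continuous, predictable, finite-variation martingale must vanish, which gives the pointwise equation for a.e.\ $q$. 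Without some version of these steps your argument remains a plan rather than a proof. A secondary but real issue: your appeal to Picard--Lindel\"of for $u'=\lambda/(2\alpha(u+q))$ breaks down at the initial point when $\mu=0$, since then $F(0)+0=0$, the vector field is not locally Lipschitz there, and indeed $F'(0)=+\infty$ in that case. The paper sidesteps this by inverting: $F^{-1}$ solves the \emph{linear} equation $g'(x)=\frac{2\alpha}{\lambda}(g(x)+x)$ with $g(\sqrt{\mu/\alpha})=0$, whose unique solution is computed explicitly to be $G$. You should either adopt that device or run your uniqueness argument on $(0,+\infty)$ and recover the endpoint by continuity.
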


\begin{proof}
  The  first part  of the  Lemma  is a  well known  result from  Laplace
  transform of superprocess  \cite{d:dpde} (Theorem 1.8) or equivalently
  of   Brownian   snake   \cite{lg:sbprspde}   (Theorem  4).    We   set
  $f(x)=\lambda x+\mu$.  We introduce the function  $u_t(q)$ defined for
  $t\geq 0$ and $q\geq 0$ by:
\[
u_t(q)=\N_q\left[1-\expp{-\int_0^\sigma f(\hat\theta_s)\ind_{\{\zeta_s\leq
      t\}}\; ds}\right]. 
\]
We deduce  from Theorem  II.5.11 of \cite{p:dwsmvd} that $u$  is the
unique non-negative solution of:
\[
u_t(q)+\E_q\left[\int_0^t \alpha u_{t-s}(\theta(s))^2\; ds\right]=
\E_q\left[\int_0^t f(\theta(s))\; ds\right].
\]
Using the Markov property of $\theta$, we get that for $t\geq r\geq 0$:
\begin{equation}
   \label{eq:ut}
u_t(q)+\E_q\left[\int_0^r \alpha u_{t-s}(\theta(s))^2\; ds\right]=
\E_q\left[\int_0^r f(\theta(s))\; ds\right] +\E_q[u_{t-r}(\theta(r))].
\end{equation}
Notice that $\lim_{t\rightarrow +\infty }
u_t(q)=F(q)$. And we have:
\[
u_t(q)\leq F(q)\leq \N\left[1-\expp{-(q+\mu)\sigma}
\right]=\sqrt{(q+\mu)/\alpha}.
\]
By monotone convergence, we deduce from \reff{eq:ut} that:
\[
F(q) +\E_q\left[\int_0^r \alpha F(\theta(s))^2\; ds\right]=
\E_q\left[\int_0^r f(\theta(s))\; ds\right] +\E_q[F(\theta(r))].
\]
This implies that the process $N=(N_t, t\geq 0)$ defined by:
\[
N_t=F(\theta(t)) +\int_0^t \left(f(\theta(s)) - \alpha F(\theta(s))^2
\right)\; ds,
\]
is  a  martingale  under  $\E_q$,  for  $q<+\infty  $.  We  deduce  from
\reff{eq:def-Mg} (with $g=F$) that:
\[
\int_0^t \left(f(\theta(s)) - \alpha F(\theta(s))^2
- 2\alpha\int_0^{\theta(s)} (F(x)-F(q))\; dx \right)\; ds
\]
is a martingale. Since it is predictable, it is a.s. constant. 
We get that a.e. for $q\geq 0$:
\[
f(q) -\alpha F(q)^2 +2\alpha q F(q) -2\alpha\int_0^q F(x)\; dx =0,
\]
that is a.e.:
\[
F(q)=\sqrt{q^2 -2 \int_0^q F(x)\; dx +(f(q)/\alpha)}\; +q. 
\]
Since by construction $F$ is non-decreasing, we get that $F$ is
continuous and then of class $\cc^1$. An obvious integration by parts gives
 \reff{eq:int-Ff}. 

We now prove the second part of the Lemma. 
Notice that $F(0)=\N_0\left[1-\expp{-\mu
    \sigma}\right]=\sqrt{\mu/\alpha}$. By differentiating \reff{eq:int-Ff}
we have:
\begin{equation}
   \label{eq:dF}
2\alpha F'(q)(F(q)+ q)=\lambda.
\end{equation}
This implies that $F'>0$ and thus $F$ is one-to-one from $[0,+\infty )$
to $[\sqrt{\mu/\alpha}, +\infty )$. Moreover,  $F^{-1}$ solves the
  differential equation
\begin{equation}
   \label{eq:EDO-G}
g'(x)=\frac{2\alpha}{ \lambda}(g(x)+x).
\end{equation}
Elementary computations give that the unique solution to \reff{eq:EDO-G}
with the initial condition $g(\sqrt{\mu/\alpha})=0$ is $G$. Thus, we
get by uniqueness $F^{-1}=G$. 
\end{proof}

Notice that $F(+\infty )=+\infty $ which doesn't able us to compute
directly the
Laplace transform of $(\Theta,\sigma)$. However, we deduce easily the
following result, which gives an alternative proof of Corollary
\ref{prop:Laplace-theta}.

\begin{cor}
   \label{cor:partilal-F-infity}
Let $\lambda> 0$, $\mu\geq 0$. We have:
\begin{equation}
   \label{eq:LaplaceN}
\N_\infty\left[\sigma \expp{-\mu \sigma -\lambda \Theta} \right]
=\inv{2\sqrt{\alpha\mu}+\lambda}\cdot
\end{equation}
In particular, under $\N_\infty $, conditionally on $\sigma$,
$\sqrt{\frac{2\alpha} {\sigma}}\;  \Theta$
is distributed as a Rayleigh random variable $Z$ independent of
$\sigma$. 
\end{cor}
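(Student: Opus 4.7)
The plan is to derive (\ref{eq:LaplaceN}) by differentiating the implicit relation $G(F(q)) = q$ of Lemma \ref{lem:eqF} in the parameter $\mu$ and letting $q \to \infty$; the conditional Rayleigh statement will then follow by disintegrating against the law of $\sigma$ and invoking Lemma \ref{lem:Lap-Rayleigh} together with injectivity of the Laplace transform.

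On the probabilistic side, couple the marked trees under $(\N_q,q\ge 0)$ on a single Poisson mark measure so that $\N_q$ corresponds to keeping only the atoms with time-coordinate in $[0,q]$. Writing $\Theta^q$ for the analogue of $\Theta$ built from the restricted marks, one has $F(q) = \N_\infty[1 - \expp{-\lambda\Theta^q - \mu\sigma}]$ and $\expp{-\lambda\Theta^q} \to \expp{-\lambda\Theta}$ $\N_\infty$-a.e. as $q \to \infty$. Differentiating in $\mu$ under the integral is licit since $\sigma\expp{-\mu\sigma/2}$ is $\N_\infty$-integrable by (\ref{eq:densite-s}), giving $\partial_\mu F(q) = \N_\infty[\sigma \expp{-\lambda\Theta^q - \mu\sigma}]$; a second dominated-convergence step then yields
\[
\lim_{q\to\infty}\partial_\mu F(q) = \N_\infty\!\left[\sigma \expp{-\mu\sigma - \lambda\Theta}\right].
\]

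On the analytic side, differentiating $G(F(q)) = q$ in $\mu$ produces $\partial_\mu F(q) = -\partial_\mu G(F(q))/G'(F(q))$. A direct calculation from (\ref{eq:G2}) gives $\partial_\mu G(x) = -\lambda^{-1}\expp{(2\alpha/\lambda)(x-\sqrt{\mu/\alpha})}$ (the apparent singularity at $\mu=0$ cancels), while (\ref{eq:dF}) provides $G'(F(q)) = 2\alpha(F(q)+q)/\lambda$. Combined with the identity $(\sqrt{\mu/\alpha}+\lambda/(2\alpha))\expp{(2\alpha/\lambda)(F(q)-\sqrt{\mu/\alpha})} = q + F(q) + \lambda/(2\alpha)$ extracted from $G(F(q)) = q$, this yields
\[
\partial_\mu F(q) = \frac{q + F(q) + \lambda/(2\alpha)}{2\alpha(q+F(q))\bigl(\sqrt{\mu/\alpha}+\lambda/(2\alpha)\bigr)},
\]
which tends to $1/(2\sqrt{\alpha\mu}+\lambda)$ as $q \to \infty$ (since $F(q) \to \infty$), proving (\ref{eq:LaplaceN}).

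For the conditional Rayleigh statement, disintegrate (\ref{eq:LaplaceN}) against the density (\ref{eq:densite-s}) of $\sigma$ under $\N$ to obtain
\[
\int_0^\infty \frac{dr}{2\sqrt{\alpha\pi}\,\sqrt r}\, \expp{-\mu r}\,\N_\infty^{(r)}[\expp{-\lambda\Theta}] = \frac{1}{2\sqrt{\alpha\mu}+\lambda}.
\]
Lemma \ref{lem:Lap-Rayleigh} applied with $c = \lambda/(2\sqrt\alpha)$ re-expresses the right-hand side as $\int_0^\infty \frac{dr}{2\sqrt{\alpha\pi}\,\sqrt r}\,\expp{-\mu r}\,\E[\expp{-\lambda\sqrt{r/(2\alpha)}\,Z'}]$ with $Z'$ a standard Rayleigh variable; injectivity of the Laplace transform in $\mu$ identifies $\N_\infty^{(r)}[\expp{-\lambda\Theta}] = \E[\expp{-\lambda\sqrt{r/(2\alpha)}\,Z'}]$ for (a.e.) $r>0$, which is exactly the statement that conditionally on $\sigma$, the variable $\sqrt{2\alpha/\sigma}\,\Theta$ is Rayleigh and independent of $\sigma$. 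The only mildly delicate points are the two dominated-convergence interchanges, both controlled by $\N_\infty[\sigma\expp{-\mu\sigma}] = 1/(2\sqrt{\alpha\mu}) < \infty$.
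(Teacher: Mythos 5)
Your proof is correct and follows essentially the same route as the paper: differentiating the implicit relation $G(F(q))=q$ in $\mu$, using \reff{eq:dF} to express $G'(F(q))$, letting $q\to\infty$, and then disintegrating against \reff{eq:densite-s} and invoking Lemma \ref{lem:Lap-Rayleigh} for the conditional Rayleigh claim. The only difference is that you spell out the dominated-convergence justifications for $\partial_\mu F(q)=\N_q[\sigma\expp{-\lambda\Theta-\mu\sigma}]$ and for the limit $q\to\infty$, which the paper leaves implicit.
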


\begin{proof}
   We have for $q\in[0,+\infty)$:
\begin{equation}
   \label{eq:dF-mu}
\partial_\mu F(q)=\N_q\left[\sigma \expp{-\lambda\Theta-\mu\sigma}
\right] .
\end{equation}
Since $G(F(q))=q$ we get:
\[
(\partial_\mu G)(F(q)) + G'(F(q))\; \partial_\mu F(q)=0.
\]
We have:
\[
\partial_\mu G(x)=
-\frac{1}{\lambda}\expp{\frac{2\alpha}{\lambda}(x-\sqrt{\mu/\alpha})}=-\frac{1}{\lambda}\;
\frac{1}{2\sqrt{\alpha\mu}+\lambda}(2\alpha
G(x)+2\alpha x+\lambda).
\]
Notice that $G'(F(q))=1/F'(q)$. We deduce from \reff{eq:dF} that:
\begin{align*}
 \partial_\mu F(q) 
&=
 \frac{1}{2\alpha(F(q)+q)}\frac{1}{2\sqrt{\alpha\mu}+\lambda}(2\alpha
 q+2\alpha F(q)+\lambda)\\
& =\frac{1}{2\sqrt{\alpha\mu}+\lambda}\left(1+\frac{\lambda}{2\alpha(F(q)+q)}\right).
\end{align*}
Letting $q$ go to infinity gives the first part of the Corollary. 

For the last part, use Lemma \ref{lem:Lap-Rayleigh} and the distribution
of $\sigma$ under $\N$ given in \reff{eq:densite-s} to conclude. 
\end{proof}

The last part of the Section is devoted to the computation of the first
moment of $\Theta$ under $\N^{(r)}_q$, with $q<+\infty $. 
\medskip
We first give the asymptotic expansion of $F$ with respect to small $\lambda$. 
We write  $O(\lambda^k)$ for any function $g$ of $q,\mu$ and
$\lambda$ such that for any $q>0$, $\mu>0$ and $\varepsilon>0$ there
exists a finite constant $C$ (depending on $q$, $\mu$ and $\varepsilon$)
such that for all  $\lambda\in
[0,\varepsilon]$, $|g(q,\mu,\lambda)|\leq  C \lambda^k$. Notice that
$O(\lambda^k)$ is not uniform in $q$ or $\mu$. 

\begin{lem}
   \label{lem:dev-F-lambda}
Let $q\in (0,+\infty )$. 
We set $z=q\sqrt{\frac{\alpha}{\mu}}$. We have:
\begin{equation}
   \label{eq:dev-F-lambda}
F(q)= \sqrt{\frac{\mu}{\alpha}} + \frac{\lambda}{2\alpha} \log(1+z)- \frac{\lambda^2}{4\alpha^{3/2}\mu^{1/2}}
\;  \frac{z -\log(1+z)}{1+z} + O(\lambda^3).
\end{equation}
In particular, we deduce that:
\begin{equation}
   \label{eq:deriv-F}
\partial_\lambda F(q)_{|\lambda=0}=
\inv{2\alpha}\log(1+ z)
\quad\text{and}\quad
\partial^2_\lambda F(q)_{|\lambda=0}= - \frac{1}{2\alpha^{3/2}\mu^{1/2}}
\;
\frac{z -\log(1+z)}{1+z}\cdot
\end{equation}
\end{lem}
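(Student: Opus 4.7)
\medskip
\noindent\textbf{Proof plan.} The natural approach is to invert the explicit function $G$ from \eqref{eq:G2}, since $F=G^{-1}$ by Lemma~\ref{lem:eqF}. The obstacle to expanding directly is the factor $\frac{2\alpha}{\lambda}$ in the exponent of $G$: as $\lambda\downarrow 0$, both $F(q)\to\sqrt{\mu/\alpha}$ and the argument of the exponential becomes indeterminate, so $G$ is not jointly smooth in $(x,\lambda)$ at the relevant point. To remove this singularity, I would introduce the rescaled unknown
\[
u=u(q,\lambda)=\frac{2\alpha}{\lambda}\bigl(F(q)-\sqrt{\mu/\alpha}\bigr),
\qquad\text{so that}\qquad F(q)=\sqrt{\mu/\alpha}+\frac{\lambda}{2\alpha}u.
\]
Plugging $x=\sqrt{\mu/\alpha}+\frac{\lambda u}{2\alpha}$ into the identity $G(F(q))=q$ and collecting terms, the equation becomes
\[
\Psi(u,\lambda):=\sqrt{\mu/\alpha}\,(\mathrm{e}^u-1)+\frac{\lambda}{2\alpha}\,(\mathrm{e}^u-u-1)=q,
\]
which is now jointly analytic in $(u,\lambda)$ on $\R\times\R$.

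At $\lambda=0$ this reduces to $\sqrt{\mu/\alpha}(\mathrm{e}^{u_0}-1)=q$, i.e.\ $\mathrm{e}^{u_0}=1+z$ with $z=q\sqrt{\alpha/\mu}$, so $u_0=\log(1+z)$. Since $\partial_u\Psi(u_0,0)=\sqrt{\mu/\alpha}\,\mathrm{e}^{u_0}=\sqrt{\mu/\alpha}\,(1+z)>0$, the analytic implicit function theorem produces a unique analytic branch $\lambda\mapsto u(\lambda)$ near $0$ with $u(0)=u_0$. I would then write $u(\lambda)=u_0+\lambda u_1+\lambda^2 u_2+O(\lambda^3)$ and substitute into $\Psi(u,\lambda)=q$, matching powers of $\lambda$.

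The order-$\lambda$ identity reads $\sqrt{\mu/\alpha}\,\mathrm{e}^{u_0}u_1+\frac{1}{2\alpha}(\mathrm{e}^{u_0}-u_0-1)=0$, which yields
\[
u_1=-\frac{\mathrm{e}^{u_0}-u_0-1}{2\sqrt{\alpha\mu}\,\mathrm{e}^{u_0}}=-\frac{z-\log(1+z)}{2\sqrt{\alpha\mu}\,(1+z)}.
\]
Translating back via $F(q)=\sqrt{\mu/\alpha}+\tfrac{\lambda}{2\alpha}u_0+\tfrac{\lambda^2}{2\alpha}u_1+O(\lambda^3)$ immediately gives \eqref{eq:dev-F-lambda}; the two derivative formulas in \eqref{eq:deriv-F} are then read off by computing $\partial_\lambda F$ and $\partial_\lambda^2 F$ at $\lambda=0$ from this Taylor expansion.

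No step is really hard; the only subtle point is the change of variables $u=\tfrac{2\alpha}{\lambda}(F-\sqrt{\mu/\alpha})$, which desingularizes the $1/\lambda$ in the exponent of $G$ and turns the inversion of $G$ into a standard analytic implicit function problem. The remainder term $O(\lambda^3)$ in the statement is uniform for $\lambda$ in a compact neighborhood of $0$ depending on $q,\mu$, which is exactly the uniformity asserted in the lemma and which the implicit function theorem supplies for free.
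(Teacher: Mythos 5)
Your argument is correct and reaches the stated expansion with the right coefficients; let me compare it with the paper's route. Both proofs rest on the same foundation, namely $F=G^{-1}$ from Lemma~\ref{lem:eqF}, and both must first tame the singular factor $\frac{2\alpha}{\lambda}$ in the exponent of $G$. The paper does this by taking logarithms in $G(F(q))=q$ to obtain the fixed-point identity \reff{eq:del-F}, $F(q)=\sqrt{\mu/\alpha}+\frac{\lambda}{2\alpha}\log\bigl(\frac{2\alpha q+2\alpha F(q)+\lambda}{2\sqrt{\alpha\mu}+\lambda}\bigr)$, and then bootstraps: starting from $F(q)=\sqrt{\mu/\alpha}+O(1)$ it re-injects the current estimate into the right-hand side three times to gain one power of $\lambda$ at each step. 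Your substitution $u=\frac{2\alpha}{\lambda}(F(q)-\sqrt{\mu/\alpha})$ is exactly the same desingularization in different clothing (the paper's identity is precisely $u=\log\bigl(\frac{2\alpha q+2\alpha F(q)+\lambda}{2\sqrt{\alpha\mu}+\lambda}\bigr)$), but you then solve $\Psi(u,\lambda)=q$ by the analytic implicit function theorem and match powers of $\lambda$ rather than iterating. Your computation of $u_0=\log(1+z)$ and $u_1=-\frac{z-\log(1+z)}{2\sqrt{\alpha\mu}(1+z)}$ checks out and reproduces \reff{eq:dev-F-lambda}. What your route buys is that $\lambda\mapsto F(q)$ is genuinely analytic near $0$, so the passage from the expansion to the derivative formulas \reff{eq:deriv-F} (in particular the second derivative, which does not follow from a bare two-term asymptotic expansion) is fully justified, whereas the paper asserts it "readily". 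The one point you should make explicit is that the branch produced by the implicit function theorem coincides with $F$ for small $\lambda>0$: this follows because $G$ is increasing and one-to-one from $[\sqrt{\mu/\alpha},+\infty)$ onto $[0,+\infty)$ and your branch satisfies $u(\lambda)\to u_0=\log(1+z)>0$, hence $x(\lambda)=\sqrt{\mu/\alpha}+\frac{\lambda}{2\alpha}u(\lambda)\geq\sqrt{\mu/\alpha}$ for small $\lambda>0$, so it must equal $G^{-1}(q)=F(q)$.
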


\begin{proof}
   Using the second part of Lemma \ref{lem:eqF} and \reff{eq:G2}, we get:
\begin{equation}
   \label{eq:del-F}
F(q)=\sqrt{\frac{\mu}{\alpha}} +
\frac{\lambda}{2\alpha}\log\left(\frac{2\alpha q+2\alpha F(q)+\lambda}{2\sqrt{\alpha\mu}+\lambda}\right).
\end{equation}
Using \reff{eq:def-F}, we get that $F(q)$ decreases to
$\sqrt{\mu/\alpha}$ when $\lambda $ goes down to $0$, that is
$F(q)=\sqrt{\mu/\alpha} +O(1)$. Plugging this in the right-hand side of
\reff{eq:del-F}, we get: 
\[
F(q)=\sqrt{\frac{\mu}{\alpha}} + O(\lambda).
\]
Plugging this in the right-hand side of
\reff{eq:del-F}, we get: 
\[
F(q) = \sqrt{\frac{\mu}{\alpha}} + \frac{\lambda}{2\alpha}\log(1+z) + O(\lambda^2).
\]
Plugging this again in the right-hand side of
\reff{eq:del-F}, we get \reff{eq:dev-F-lambda}. This readily implies
\reff{eq:deriv-F}.
\end{proof}

We can then compute the first moment of $\Theta$ under $\N^{(r)}_q$. 
\begin{prop}
   \label{prop:Nr-int-theta}
Let $H_q(r)=\N^{(r)}_q \left[\Theta \right]$. We have, for $r>0$ and $q\in [0, +\infty )$:
\begin{equation}
   \label{eq:def-H}
H_q(r)=\sqrt{\frac{r}{2\alpha}} \; \int_0^{q\sqrt{2\alpha r}} dy\;
\E\left[\expp{-yZ} \right]. 
\end{equation}
and
\begin{equation}
   \label{eq:encadrement-int-theta}
0\leq qr - H_q(r) \leq  \inv{2}\sqrt{\pi\alpha}\;  q^2 r^{3/2} .
\end{equation}
\end{prop}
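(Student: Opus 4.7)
The plan is to identify $H_q(r)$ by computing its Laplace transform in the variable $r$ (against the density of $\sigma$ under $\N$ from \reff{eq:densite-s}) in two different ways and matching them, then to derive the bounds \reff{eq:encadrement-int-theta} directly from the explicit formula.

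First I would disintegrate $\N_q[\Theta \expp{-\mu\sigma}]$ along $\sigma$: using \reff{eq:densite-s} and $H_q(r)=\N_q^{(r)}[\Theta]$,
\[
\N_q\bigl[\Theta\expp{-\mu\sigma}\bigr]=\int_0^{+\infty}\frac{H_q(r)}{2\sqrt{\alpha\pi}\,r^{3/2}}\expp{-\mu r}\,dr.
\]
On the other hand, differentiating $F(q)=\N_q[1-\expp{-\lambda\Theta-\mu\sigma}]$ at $\lambda=0$ gives $\partial_\lambda F(q)_{|\lambda=0}=\N_q[\Theta\expp{-\mu\sigma}]$; this differentiation under the integral is legitimized by the uniform bound $F(q)\leq\sqrt{(q+\mu)/\alpha}$ that appears in the proof of Lemma~\ref{lem:eqF}. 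Combined with the first identity in \reff{eq:deriv-F} this produces, for every $\mu>0$,
\[
\int_0^{+\infty}\frac{H_q(r)}{2\sqrt{\alpha\pi}\,r^{3/2}}\expp{-\mu r}\,dr=\frac{1}{2\alpha}\log\bigl(1+q\sqrt{\alpha/\mu}\bigr).
\]

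Next I would compute the Laplace transform of the candidate expression \reff{eq:def-H} against the same weight. The substitution $y=u\sqrt{2\alpha r}$ rewrites the right-hand side of \reff{eq:def-H} as $r\int_0^q\E[\expp{-u\sqrt{2\alpha r}\,Z}]\,du$. Fubini's theorem and Lemma~\ref{lem:Lap-Rayleigh} (applied with $c=u\sqrt\alpha$) give
\[
\int_0^{+\infty}\frac{r\,\E[\expp{-u\sqrt{2\alpha r}Z}]}{2\sqrt{\alpha\pi}\,r^{3/2}}\expp{-\mu r}\,dr=\frac{1}{2\sqrt\alpha\,(u\sqrt\alpha+\sqrt\mu)},
\]
and integrating in $u\in[0,q]$ reproduces exactly $\frac{1}{2\alpha}\log(1+q\sqrt{\alpha/\mu})$. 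Uniqueness of Laplace transforms for non-negative measurable functions on $(0,+\infty)$ then yields \reff{eq:def-H} for almost every $r>0$; continuity of both sides in $r$ (manifest for the candidate, and inherited by $H_q$ from the identity itself) upgrades the equality to every $r>0$.

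For the bounds \reff{eq:encadrement-int-theta}, starting from \reff{eq:def-H} one writes
\[
qr-H_q(r)=\sqrt{\frac{r}{2\alpha}}\int_0^{q\sqrt{2\alpha r}}\E\bigl[1-\expp{-yZ}\bigr]\,dy.
\]
The integrand is non-negative, so $H_q(r)\leq qr$. Using $1-\expp{-yZ}\leq yZ$ and $\E[Z]=\sqrt{\pi/2}$ (the mean of a Rayleigh variable), the integral is bounded by $\sqrt{\pi/2}\,(q\sqrt{2\alpha r})^2/2=\alpha q^2 r\sqrt{\pi/2}/\sqrt{2}$ times $\sqrt{r/(2\alpha)}$, which simplifies to $\frac{1}{2}\sqrt{\pi\alpha}\,q^2r^{3/2}$. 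The only truly delicate point in the whole argument is the differentiation of $F$ at $\lambda=0$ under the $\N_q$-integral; everything else reduces to the direct Laplace computation via Lemma~\ref{lem:Lap-Rayleigh} and a Rayleigh-mean estimate.
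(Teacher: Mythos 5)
Your proof is correct and follows essentially the same route as the paper's: both sides identify the Laplace transform in $\mu$ of $r\mapsto H_q(r)$ via $\partial_\lambda F(q)_{|\lambda=0}=\frac{1}{2\alpha}\log(1+q\sqrt{\alpha/\mu})$ on one hand and Lemma~\ref{lem:Lap-Rayleigh} applied to the candidate on the other, then conclude by injectivity of the Laplace transform plus continuity, and derive \reff{eq:encadrement-int-theta} from $0\le 1-\expp{-z}\le z$; the only cosmetic difference is that you match the transforms directly against the weight $r^{-3/2}$ while the paper first applies $-\partial_\mu$ to work with the weight $r^{-1/2}$. One small imprecision: continuity of $r\mapsto H_q(r)$ cannot be ``inherited from the identity itself'' since that identity is only known $dr$-a.e. at that stage; the paper instead obtains it from a Brownian scaling argument relating $\N^{(r)}_q$ to $\N^{(1)}_{q'}$, which is the justification you should use.
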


\begin{proof}
By the change of variable $y=q\sqrt{2\alpha z}$, we have
$$H_q(r)=\frac{q\sqrt{r}}{2}\int_0^r\frac{dz}{\sqrt{z}}\int_0^{+\infty}dx\,x\,\exp\left(-\frac{x^2}{2}-q\sqrt{2\alpha
    z}\, x\right).$$
Then we compute for $\mu>0$,
\begin{align*}
\int_0^{+\infty}\frac{dr}{2\sqrt{\alpha\pi r}}\expp{-\mu r}H_q(r)
& =\frac{q}{4\sqrt{\pi\alpha}}\int_0^{+\infty}dr\expp{-\mu
      r}\int_0^r\frac{dz}{\sqrt{z}}\int_0^{+\infty}dx\,x\,\exp\left(-\frac{x^2}{2}-q\sqrt{2\alpha
      z}\, x\right)\\
& =\frac{q}{4\sqrt{\pi\alpha}}\int_0^{+\infty}\frac{dz}{\sqrt
  z}\int_z^{+\infty}dr\expp{-\mu
  r}\int_0^{+\infty}dx\,x\,\exp\left(-\frac{x^2}{2}-q\sqrt{2\alpha 
      z}\, x\right)\\
& =\frac{q}{4\sqrt{\pi\alpha}}\inv{\mu}\int_0^{+\infty}\frac{dz}{\sqrt
  z}\expp{-\mu z}\int_0^{+\infty}dx\,x\,\exp\left(-\frac{x^2}{2}-q\sqrt{2\alpha
      z}\,x\right)\\
& =\frac{q}{4\sqrt\alpha}\inv{\mu}\frac{1}{\sqrt\mu+q\sqrt \alpha},
\end{align*}
where we used  equality \reff{eq:Lap-Rayleigh} for the last equality. 

On the other hand, we have:
\begin{align*}
   \int_0^{+\infty }\frac{dr}{ 2\sqrt{\alpha\pi r}}\;\expp{-\mu r}
   \N^{(r)}_q \left[ \Theta \right]
&= -\partial_\mu \int_0^{+\infty }\frac{dr}{ 2\sqrt{\alpha\pi} \; r^{3/2}
    }\;\expp{-\mu r} \N^{(r)}_q \left[ 
\Theta \right]\\ 
&= -\partial_\mu 
\N_q \left[\expp{-\mu \sigma} 
     \Theta  \right]\\
&= -\partial_\mu \left[\partial_\lambda F(q)_{|\lambda=0}\right]\\
&= -\inv{2\alpha} \partial_\mu \log\left( 1+q\sqrt{\frac{\alpha}{\mu}}\right)\\
&=\frac{q}{4\sqrt{\alpha}} \;\inv{\mu}\; \inv{\sqrt{\mu}+
  q\sqrt{\alpha}} ,
\end{align*}
where we used Definition \reff{eq:def-F} of $F$ for the third equality and \reff{eq:deriv-F}
for the fourth one. 
Therefore, we have that  $dr$-a.e.   $\N^{(r)}_q  \left[
\Theta \right]= H_q(r)$. Then the equality
holds for all $r>0$ by continuity (using again a scaling argument). 

Then, use  $0\leq  1-\expp{-z}\leq  z$ for $z\geq 0$, to get
\reff{eq:encadrement-int-theta}. 
\end{proof}




\end{document}